\setlist{listparindent=0pt,parsep=3pt}
\newsavebox{\largestimage}
\newcommand{\TitleWithUrl}[1]{\IfEmptyBibField{doi}%
  {\IfEmptyBibField{url}{\textit{#1}}%
    {\IfEmptyBibField{eprint}{\href {\BibField{url}}{\textit{#1}}}{\textit{#1}}}%
    }%
  {\href {https://doi.org/\BibField{doi}}{\textit{#1}}}}
\renewcommand{\eprint}[1]{\IfEmptyBibField{url}{\url{#1}}%
  {\href {\BibField{url}}{#1}}}
\newtheorem{theorem}{Theorem}[section]
\newtheorem{lemma}[theorem]{Lemma}
\newtheorem{corollary}[theorem]{Corollary}
\newtheorem{proposition}[theorem]{Proposition}
\theoremstyle{definition}
\newtheorem{definition}[theorem]{Definition}
\theoremstyle{remark}
\newtheorem{remark}[theorem]{Remark}
\newtheorem{assumption}[theorem]{Assumption}
\numberwithin{equation}{section}
\newcommand{\hermtwo}{\mathrm{Herm}(2)}
\newcommand{\SLtwoC}{\mathrm{SL}(2,\mathbb{C})}
\newcommand{\SUoneohone}{\mathrm{SU}(1,0,1)}
\newcommand{\suoneohone}{\mathfrak{su}(1,0,1)}
\newcommand{\SOthreeone}{\mathrm{SO}(3,1)}
\DeclareMathOperator{\trace}{tr}
\DeclareMathOperator{\lcm}{lcm}
\let\Im\relax
\DeclareMathOperator{\Im}{Im}
\let\Re\relax
\DeclareMathOperator{\Re}{Re}
\title{Spinor representation in isotropic 3-space via Laguerre geometry}
\author{Joseph Cho}
\address[Joseph Cho]{Institute of Discrete Mathematics and Geometry, TU Wien, Wien, 1040, Austria}
\email{jcho@geometrie.tuwien.ac.at}
\author{Dami Lee}
\address[Dami Lee]{Department of Mathematics, Indiana University, Bloomington, IN, 47405, USA}
\email{damilee@indiana.edu}
\author{Wonjoo Lee}
\address[Wonjoo Lee]{Department of Mathematics, Korea University, Seoul, 02841, Republic of Korea}
\email{wontail123@korea.ac.kr}
\author{Seong-Deog Yang}
\address[Seong-Deog Yang]{Department of Mathematics, Korea University, Seoul, 02841, Republic of Korea}
\email{sdyang@korea.ac.kr}
\subjclass[2020]{Primary 53A10; Secondary 53B30.}
\keywords{Laguerre geometry, isotropic geometry, spinor representation, Weierstrass representation, Kenmotsu representation}
\begin{document}
\begin{abstract}
	We give a detailed description of the geometry of isotropic space, in parallel to those of Euclidean space within the realm of Laguerre geometry.
	After developing basic surface theory in isotropic space, we define spin transformations, directly leading to the spinor representation of conformal surfaces in isotropic space.
	As an application, we obtain the Weierstrass-type representation for zero mean curvature surfaces, and the Kenmotsu-type representation for constant mean curvature surfaces, allowing us to construct many explicit examples.
\end{abstract}

\maketitle

\section{Introduction}

The Erlanger program \cite{klein_vergleichende_1872} of Klein was epochal to the field of differential geometry for obtaining space form geometries via \emph{transformation groups}, also investigated in his two papers \cite{klein_ueber_1871, klein_ueber_1873}.
The central idea was to present \emph{projective geometry} as the commonground for various known space form geometries.
In particular, applying the ideas of Cayley \cite{cayley_sixth_1859}, Klein showed a way to define metrics in the realm of projective geometry by restricting to projective transformations that leave a certain quadric invariant.
This was referred to as the \emph{absolute} by Cayley.
By doing so, Klein unified Euclidean geometry with the various non-Euclidean geometries including hyperbolic geometry and elliptical geometry as subgeometries of projective geometry.

Researchers found other ways to choose the absolute; the geometry of isotropic plane is one of such \emph{Cayley-Klein geometries}, first studied in works such as \cite{beck_zur_1913, study_zur_1909, berwald_uber_1915}.
The study of isotropic $3$-space followed soon, most notably by Strubecker, covering the basics of isotropic geometry \cite{strubecker_beitrage_1938}, space curve theory \cite{strubecker_differentialgeometrie_1941}, and surface theory \cite{strubecker_differentialgeometrie_1942-1, strubecker_differentialgeometrie_1942, strubecker_differentialgeometrie_1944, strubecker_differentialgeometrie_1949}.
Many modern expositions on the topic of isotropic plane and isotropic space including \cite{yaglom_simple_1979, sachs_isotrope_1990} also follow such approach of Klein.
Recently, the geometry of isotropic space has gained interest, most notably for their applicability to architectural geometry \cite{pottmann_discrete_2007} (see also, for example, \cite{vouga_design_2012, kilian_material-minimizing_2017, millar_designing_2022, tellier_designing_2023}).

On the other hand, \emph{spin transformations} for conformal surfaces in Euclidean space were defined in \cite{kamberov_bonnet_1998} by relating two conformally equivalent surfaces via homotheties and rotations of the corresponding tangent planes.
Spin transformations were used to characterize \emph{Bonnet pairs} via isothermic surfaces, a class of surfaces that admit conformal curvature coordinates.
The \emph{quaternionic description} of rotation in Euclidean space proved to be central to obtaining the Dirac-type equation as the compatibility condition for spin transformations \cite{kamberov_bonnet_1998}, and deriving \emph{spinor representations} of conformal surfaces \cite{kusner_spinor_1995} via spin transformations \cite{friedrich_spinor_1998}.

In this paper, we propose \emph{Laguerre geometry} as a basis for understanding isotropic space and surface theory within, which in itself is closely related to the approach of Klein.
Laguerre geometry \cite{laguerre_sur_1881} is concerned with the set of transformations in Euclidean space that maps points (viewed as zero radius spheres) and spheres to points and spheres, and planes to planes. (For details, see \cite{blaschke_vorlesungen_1929, cecil_lie_2008}, for example.)
The corresponding Laguerre geometry of isotropic geometry was given in \cite{graf_zur_1936, pottmann_applications_1998, pottmann_laguerre_2009}; our primary motivation for using Laguerre geometry is to unify the approaches to Euclidean geometry and isotropic geometry.
Thus in Section~\ref{sect:two}, we show that the Laguerre geometric description of Euclidean space carries over to isotropic space by viewing both space forms as hyperplanes in the Minkowski $4$-space (see also \cite{pember_weierstrass-type_2020, pember_discrete_2022}).
In particular, we carefully review the notions of Laguerre geometry in Euclidean space in Section~\ref{subsect:twoone}, where the Euclidean notions will provide valuable intuition for the definitions of points, spheres, planes, and normals in isotropic space, which we introduce in Section~\ref{subsect:twotwo}.
In doing so, we also check that our definitions are in harmony with the classical definitions given in \cite{strubecker_beitrage_1938} or \cite{pottmann_discrete_2007}.
Such approach allows for the geometry of isotropic space to be understood in parallel to that of Euclidean space, without any prior knowledge of projective geometry.

Laguerre geometric approach to isotropic space turns out to be highly suitable for surface theoretic considerations, which is the center of our attention in Section~\ref{sect:three}.
By viewing surfaces as codimension two immersions in Minkowski $4$-space, we use the notion of lightcone Gauss maps of \cite{izumiya_lightcone_2004}, which we call \emph{lightlike Gauss maps} after \cite{pember_weierstrass-type_2020}, to define the second fundamental form of the surface.
This allows us to recover the structure equations of Gauss and Weingarten, given by Strubecker in \cite{strubecker_differentialgeometrie_1942} (see Section~\ref{subsect:threeone}).

Furthermore, the well-known isomorphism between Minkowski $4$-space and Hermitian matrices allows us to use the matrix group $\SLtwoC$ to obtain a quaternion-like description of rotations in isotropic $3$-space.
Thus, after converting the structure equations in terms of Hermitian matrices in Section~\ref{subsect:threeone}, we define \emph{spin transformations} of conformal surfaces in isotropic $3$-space in Definition~\ref{def:spin}, and obtain \emph{Dirac-type equation} as the compatibility condition in Theorem~\ref{thm:Dirac}.
We note here that since surfaces in isotropic $3$-space can be regarded as objects in Minkowski $4$-space, the spin transformation is a special case of the result in \cite{bayard_spinorial_2013} (see also \cite{aledo_marginally_2005}).

Using spin transformations, we then obtain the spinor representation of conformal surfaces in isotropic $3$-space in Theorem~\ref{thm:spinor}, a notion that can also be interpreted as Kenmotsu representation for surfaces with prescribed mean curvature \cite{kenmotsu_weierstrass_1979} (see Remark~\ref{rem:kenmotsu}).
As an application, we recover the Weierstrass-type representation for minimal surfaces \cite{strubecker_differentialgeometrie_1942} (see also \cite{sachs_isotrope_1990,pember_weierstrass-type_2020, seo_zero_2021, da_silva_holomorphic_2021}) in Theorem~\ref{thm:Weierstrass}.
We also obtain the Kenmotsu-type representation for non-zero constant mean curvature (cmc) surfaces in isotropic $3$-space in Theorem~\ref{thm:Kenmotsu}.
A standout feature of the Kenmotsu-type representation for cmc surfaces in isotropic $3$-space is that one can obtain many explicit examples of cmc surfaces using the representation; we demonstrate this with concrete examples given in Section~\ref{subsect:examples}.

\section{Geometry of isotropic \texorpdfstring{$3$}{3}-space}\label{sect:two}
The parallels between the geometry of isotropic $3$-space and Euclidean $3$-space cannot be understood using the approach involving inner products, since the inner product of isotropic $3$-space is degenerate.
Thus to see the parallelism between the two spaces, we need to understand both geometries without any dependence on metrics.
We propose \emph{Laguerre geometry} \cite{laguerre_sur_1881} as the commonground for understanding the geometry of both Euclidean and isotropic $3$-spaces without any use of inner products (see also \cite[Example~3.3]{pember_discrete_2022}).

In this section, we first give a detailed Laguerre geometric description of basic geometric objects such as planes, spheres, and normals in Euclidean $3$-space without dependence on the standard Euclidean metric, where Euclidean geometric intuition will serve as justifications for the definitions.
We then use the intuition gained from the Euclidean case to define the analogous geometric objects in isotropic $3$-space using Laguerre geometry, making connections to previously known descriptions of isotropic $3$-space.

\subsection{Laguerre geometry of Euclidean \texorpdfstring{$3$}{3}-space}\label{subsect:twoone}
First, we give an account of the Laguerre geometric description of Euclidean $3$-space (see also \cite[\S 3.4]{cecil_lie_2008} or \cite[4.\ Kapitel]{blaschke_vorlesungen_1929}).
Let $\mathbb{L}^4 := \{ (x_0, x_1, x_2, x_3)^t : x_0, x_1, x_2, x_3 \in \mathbb{R}\} $ denote the Minkowski $4$-space with the given metric
	\[
		\langle x, y \rangle = \langle (x_0, x_1, x_2, x_3)^t, (y_0, y_1, y_2, y_3)^t \rangle := -x_0 y_0 + x_1 y_1 + x_2 y_2 + x_3 y_3.
	\]
%Throughout the paper, we identify $\wedge^2 \mathbb{L}^4$ with the Lie algebra $\mathfrak{o}(3,1)$ via
%	\[
%		(x \wedge y) Z = \langle x, Z \rangle y - \langle y, Z \rangle x
%	\]
%for any $x,y,Z \in \mathbb{L}^4$.
Choosing a unit timelike vector $\mathfrak{p}$, i.e., $\langle \mathfrak{p}, \mathfrak{p} \rangle = -1$, we have
	\[
		\mathbb{L}^4 = \langle \mathfrak{p} \rangle^\perp \oplus \langle \mathfrak{p} \rangle \cong \mathbb{E}^3 \oplus \langle \mathfrak{p} \rangle
	\]
where $\mathbb{E}^3$ is the standard Euclidean $3$-space.
We note here that any vector $x \in \mathbb{L}^4$ can be expressed as
	\[
		x = x_E + \alpha \mathfrak{p}
	\]
for some $x_E \in \mathbb{E}^3$ and $\alpha \in \mathbb{R}$, and let $\pi_E: \mathbb{L}^4 \to \mathbb{E}^3$ be the orthoprojection, that is, $\pi_E x = x_E$.

Explicitly, one can normalize $\mathfrak{p} = (1,0,0,0)^t$ with a coordinate chart $\psi : \{(\mathbf{x}, \mathbf{y}, \mathbf{z})\} \to \mathbb{E}^3$ so that
	\[
		\mathbb{E}^3 := \{(0, \mathbf{x}, \mathbf{y}, \mathbf{z})^t \in \mathbb{L}^4\}.
	\]

\subsubsection{Spheres and planes}
Let $s \in \mathbb{L}^4$ be any given point, and consider the affine light cone $\mathcal{L}_s$ centered at $s$, that is,
	\[
		\mathcal{L}_s = \{ x \in \mathbb{L}^4 : \langle x - s, x - s \rangle = 0 \}.
	\]
An \emph{oriented sphere} $S$ of Euclidean $3$-space corresponding to $s$ is given as the intersection between $\mathcal{L}_s$ and $\mathbb{E}^3$, i.e., 
	\[
		S = \mathcal{L}_s \cap \mathbb{E}^3 = \{x \in \mathcal{L}_s : \langle x, \mathfrak{p} \rangle = 0\}.
	\]
In fact, writing
	\begin{equation}\label{eqn:sphereOrtho}
		s = c + r \mathfrak{p},
	\end{equation}
for some $c \in \mathbb{E}^3$ and $r \in \mathbb{R}$, we have that the center and the signed radius\footnote{The signature of the radius is interpreted as orientation.} of $S$ are $c$ and $r$, respectively.
This is called the \emph{isotropy projection} (see Figure~\ref{fig:sphere}).
Explicitly, for $s = c + r \mathfrak{p} = (r, c_1, c_2, c_3)^t \in \mathbb{L}^4$,  we have $x \in \mathcal{L}_s \cap \mathbb{E}^3$ if and only if using coordinates,
	\[
		0 = \langle x - s, x - s \rangle = -r^2 + (\mathbf{x} - c_1)^2 + (\mathbf{y} - c_2)^2 + (\mathbf{z} - c_3)^2.
	\]
Thus, every point $s \in \mathbb{L}^4$ corresponds to an oriented sphere in $\mathbb{E}^3 \cong \langle \mathfrak{p} \rangle^\perp$ under the isotropy projection.

In particular, every point $s \in \mathbb{L}^4$ such that $\langle s, \mathfrak{p} \rangle= 0$ corresponds to points (zero radius spheres) in $\mathbb{E}^3$; for this reason the vector $\mathfrak{p}$ is referred to as the \emph{point sphere complex}.

On the other hand, consider an affine isotropic hyperplane $I_{g,q}$ given by a null vector $g \in \mathbb{L}^4$ and some constant $q \in \mathbb{R}$ via
	\begin{equation}\label{eqn:plane}
		I_{g,q} = \{x \in \mathbb{L}^4 : \langle x, g \rangle = q\}.
	\end{equation}
Without loss of generality, we normalize $g$ so that
	\begin{equation}\label{eqn:norm}
		\langle g, \mathfrak{p} \rangle = -1.
	\end{equation}
An \emph{oriented plane} $P$ in $\mathbb{E}^3$ is then given by the intersection between such affine isotropic hyperplanes $I_{g,q}$ and $\mathbb{E}^3 \cong \langle \mathfrak{p} \rangle^\perp$ (see Figure~\ref{fig:plane}).

Explicitly, let $g = (1, g_1, g_2, g_3)^t$.
Then we have $x \in P = I_{g,q} \cap \mathbb{E}^3$ if and only if
	\begin{equation}\label{eqn:planeEuc}
		q = \mathbf{x} g_1 + \mathbf{y} g_2 + \mathbf{z} g_3,
	\end{equation}
so that $P$ is a plane in the coordinate space $\{(\mathbf{x}, \mathbf{y}, \mathbf{z})\}$.

\begin{figure}
	\begin{subfigure}[b]{0.495\textwidth}
		\centering
		\includegraphics[width=\textwidth]{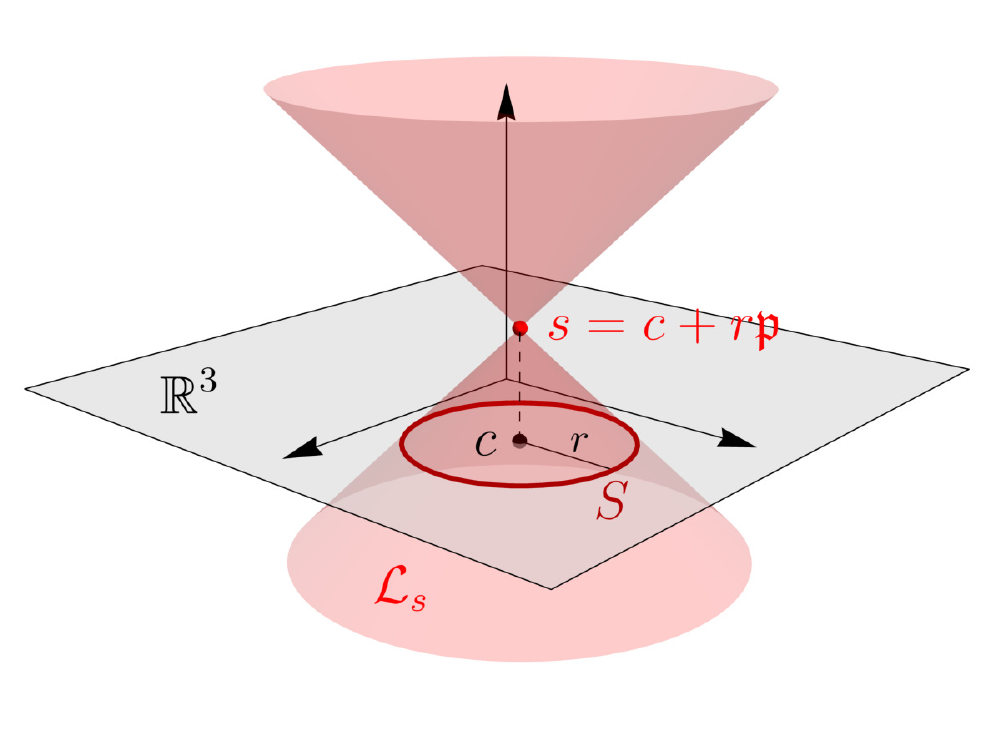}
		\caption{Spheres and isotropy projection}
		\label{fig:sphere}
	\end{subfigure}
	\hfill
	\begin{subfigure}[b]{0.495\textwidth}
		\centering
		\includegraphics[width=\textwidth]{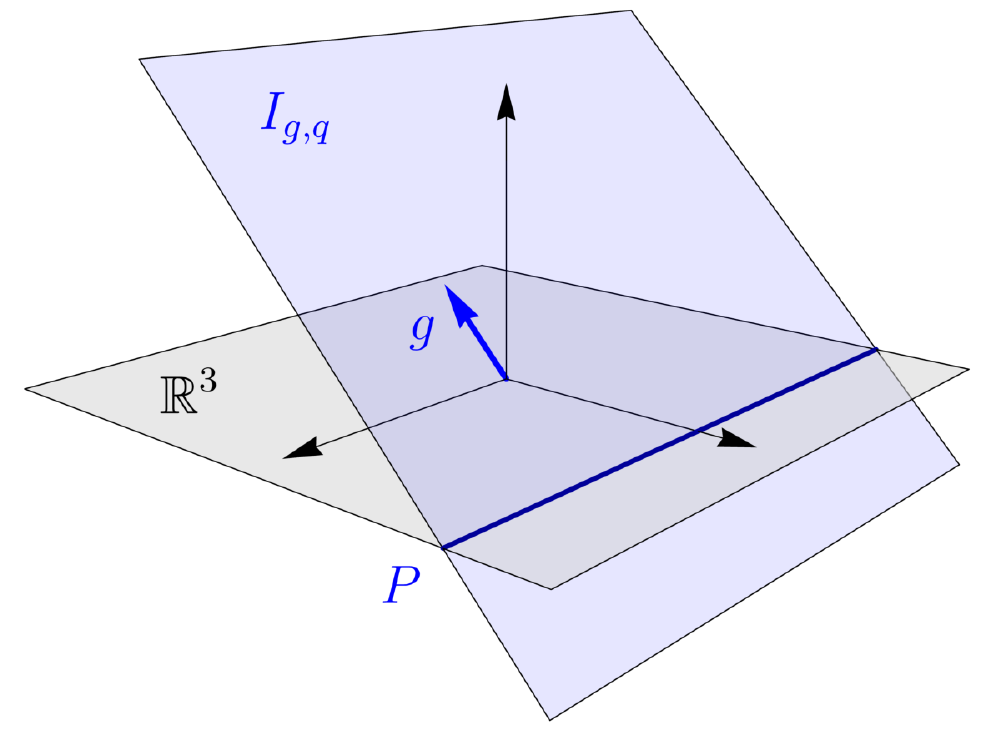}
		\caption{Planes via isotropic hyperplane}
		\label{fig:plane}
	\end{subfigure}
	\caption{Spheres and planes of Euclidean space in Laguerre geometry.}
	\label{fig:sphereplane}
\end{figure}

\begin{remark}
	The prescription of orientation on spheres and planes is essential.
	In the case of spheres, both $s = c + r \mathfrak{p}$ and $\tilde{s} = c - r \mathfrak{p}$ give the same sphere in $\mathbb{E}^3$, but with opposite orientations.
	As we will see similarly for the case of planes, both $I_{g,q}$ and $I_{\tilde{g}, -q}$ for $\tilde{g} = (1, -g_1, -g_2, -g_3)^t$ define two distinct isotropic hyperplanes resulting in the same plane in Euclidean $3$-space, but again with opposite orientations.
	See Figure~\ref{fig:orient}.
	From now on, we \emph{a priori} assume that spheres and planes are prescribed with orientations, and refer to oriented spheres and oriented planes simply as spheres and planes.
\end{remark}

\begin{figure}
	\begin{minipage}{0.4\textwidth}
		\centering
		\includegraphics[width=\textwidth]{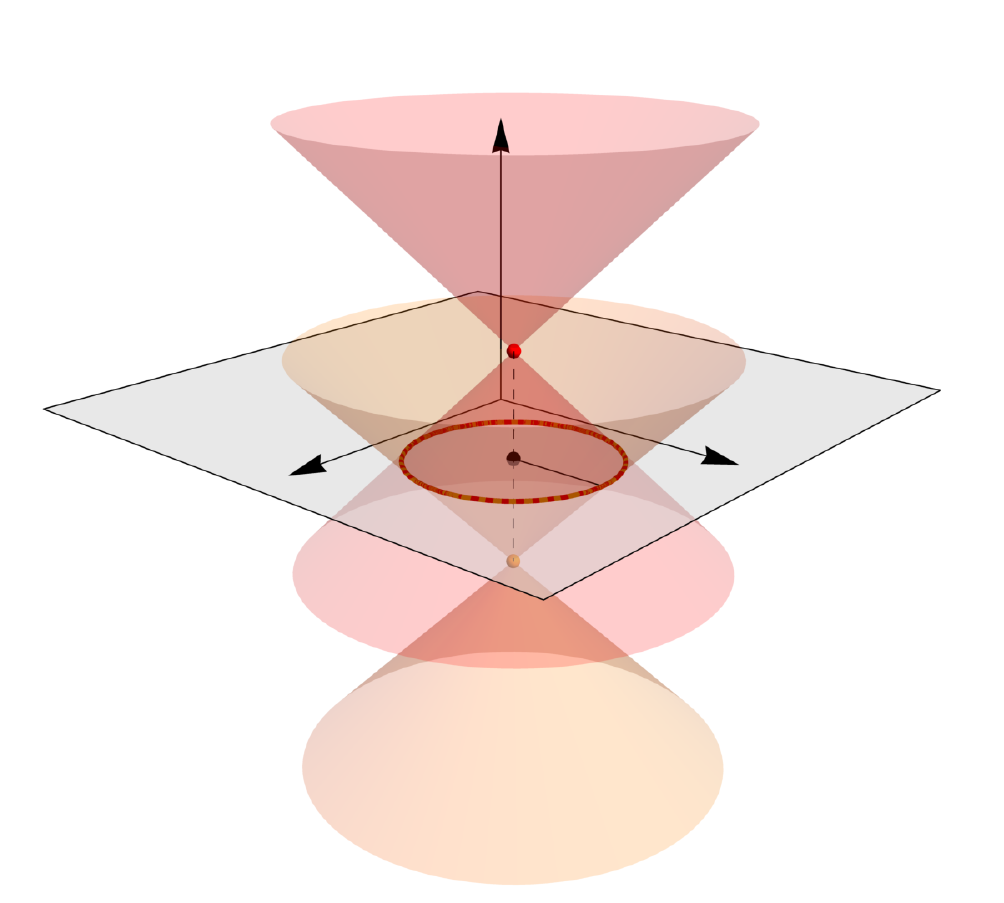}
	\end{minipage}
	\begin{minipage}{0.4\textwidth}
		\centering
		\includegraphics[width=\textwidth]{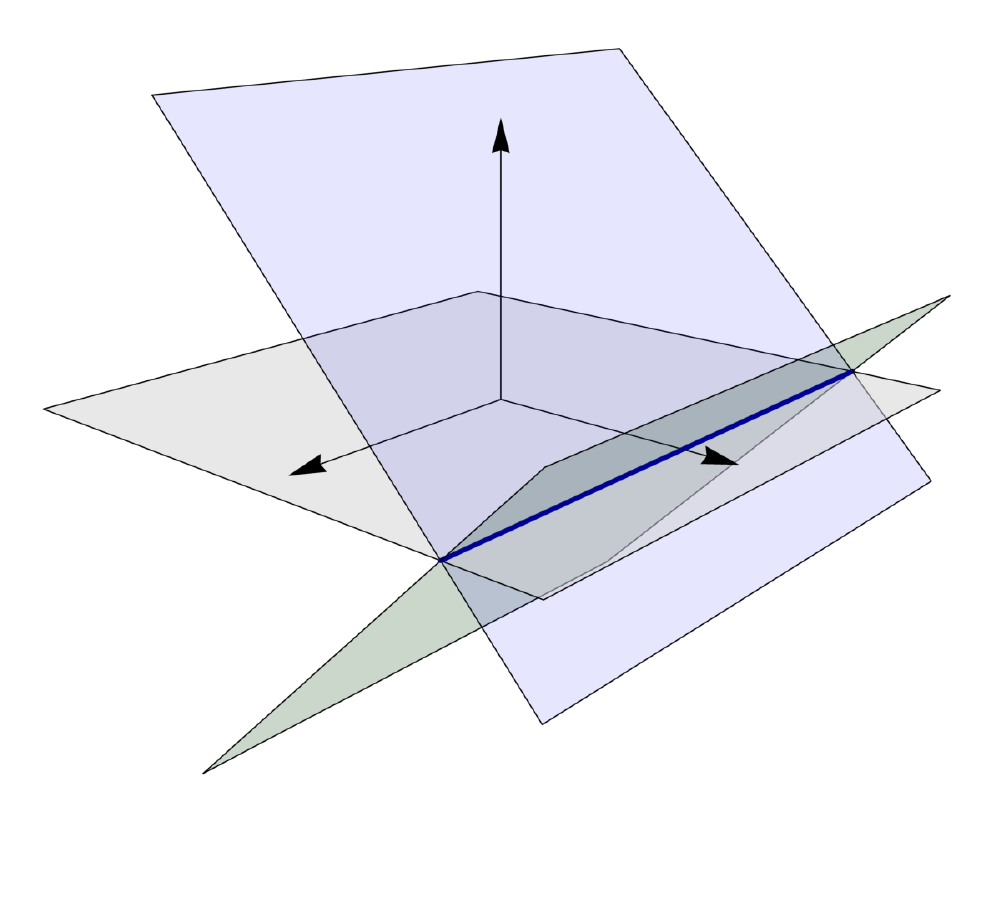}
	\end{minipage}
	\caption{Spheres and planes with different orientations.}
	\label{fig:orient}
\end{figure}

\subsubsection{Contact elements and normals}\label{sect:contactE3}
A sphere is in \emph{oriented contact} with a plane if and only if the affine lightcone $\mathcal{L}_s$ defining the sphere is tangent to the isotropic hyperplane $I_{g,q}$ defining the plane.
Thus a sphere $S$ defined by $s \in \mathbb{L}^4$ and plane $P$ defined by $I_{g,q}$ are in oriented contact if and only if $s \in I_{g,q}$ (see Figure~\ref{fig:contact1}).

Now suppose that $s \in I_{g,q}$, and let $L$ be the affine line along which $I_{g,q}$ is tangent to $\mathcal{L}_s$, i.e.\ $L = I_{g,q} \cap \mathcal{L}_s$.
For any $\tilde{s} \in L$, we have that
	\[
		\langle \tilde{s}, g \rangle = q \quad\text{and}\quad \langle \tilde{s} - s, \tilde{s} - s \rangle = 0.
	\]
Since $s \in I_{g,q}$ implies that $\langle s, g \rangle = q$, we note that
	\[
		\langle \tilde{s} - s, g \rangle = 0,
	\]
implying that $\tilde{s} - s$ and $g$ are both null vectors that are orthogonal to each other.
Thus, $\tilde{s} - s = a g$ for some $a \in \mathbb{R}$, that is, $L$ is an affine null line in the direction of $g$.

Defining $x = L \cap \mathbb{E}^3$, we see that $S$ is in oriented contact with $P$ at the point $x$ (see Figure~\ref{fig:contact2}).
Furthermore, any other $\tilde{s} \in L$ also defines a sphere $\tilde{S}$ in oriented contact with $P$ at the point $x$ (see Figure~\ref{fig:contactElem}).
Thus, an affine null line $L$ represents a $1$-parameter family of spheres in oriented contact with each other, also called a \emph{parabolic sphere pencil} (cf.\ \cite[Definition 1.2.3]{hertrich-jeromin_introduction_2003}); since parabolic sphere pencils uniquely determine the oriented contact at the corresponding points, they are referred to as \emph{contact elements}, and we denote the set of all contact elements by
	\[
		\mathcal{Z}  := \{ \text{affine null lines in $\mathbb{L}^4$} \}.
	\]
	
\begin{figure}
	\begin{subfigure}[b]{0.495\textwidth}
		\centering
		\includegraphics[width=\textwidth]{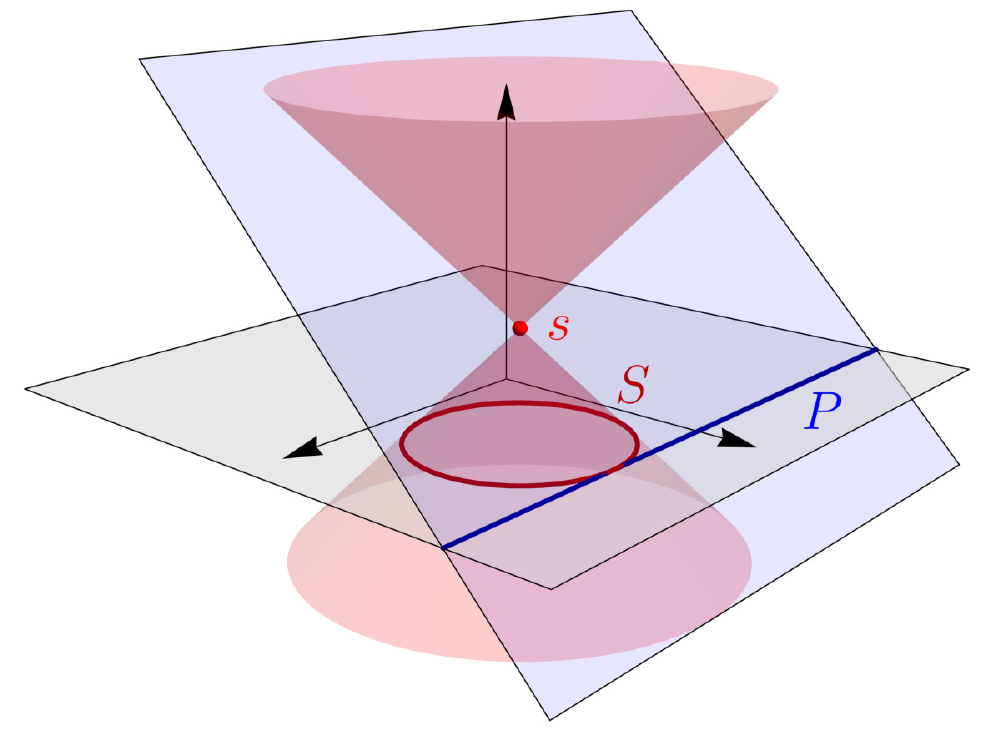}
		\caption{Spheres and planes in oriented contact}
		\label{fig:contact1}
	\end{subfigure}
	\begin{subfigure}[b]{0.495\textwidth}
		\centering
		\includegraphics[width=\textwidth]{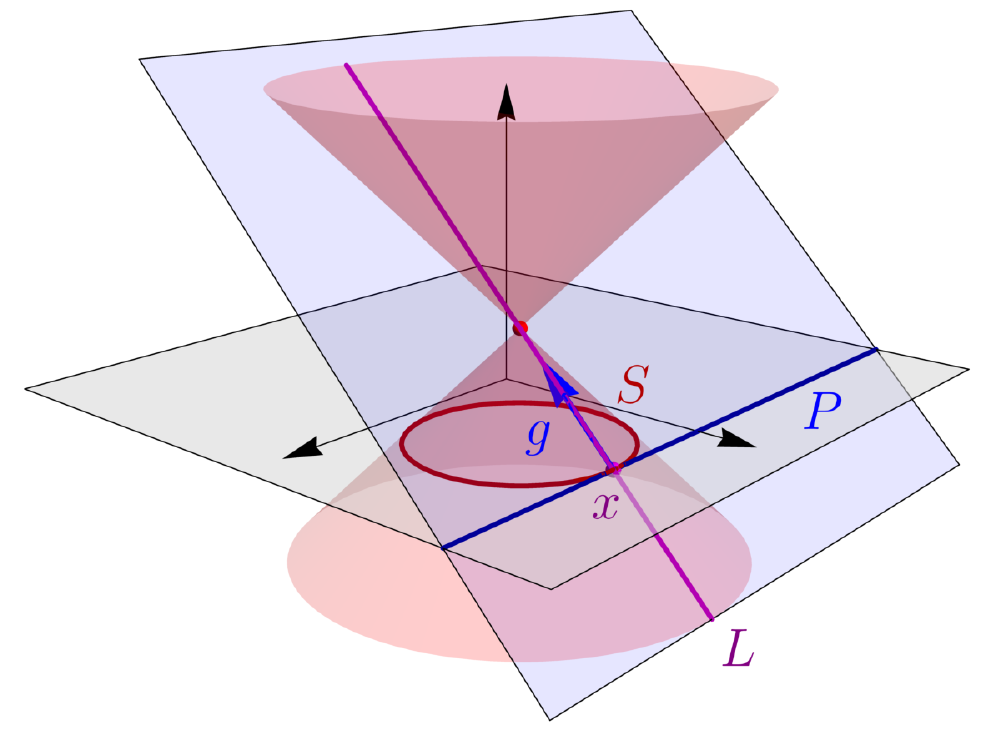}
		\caption{Point of contact via affine null line}
		\label{fig:contact2}
	\end{subfigure}
	\begin{subfigure}[b]{0.495\textwidth}
		\centering
		\includegraphics[width=\textwidth]{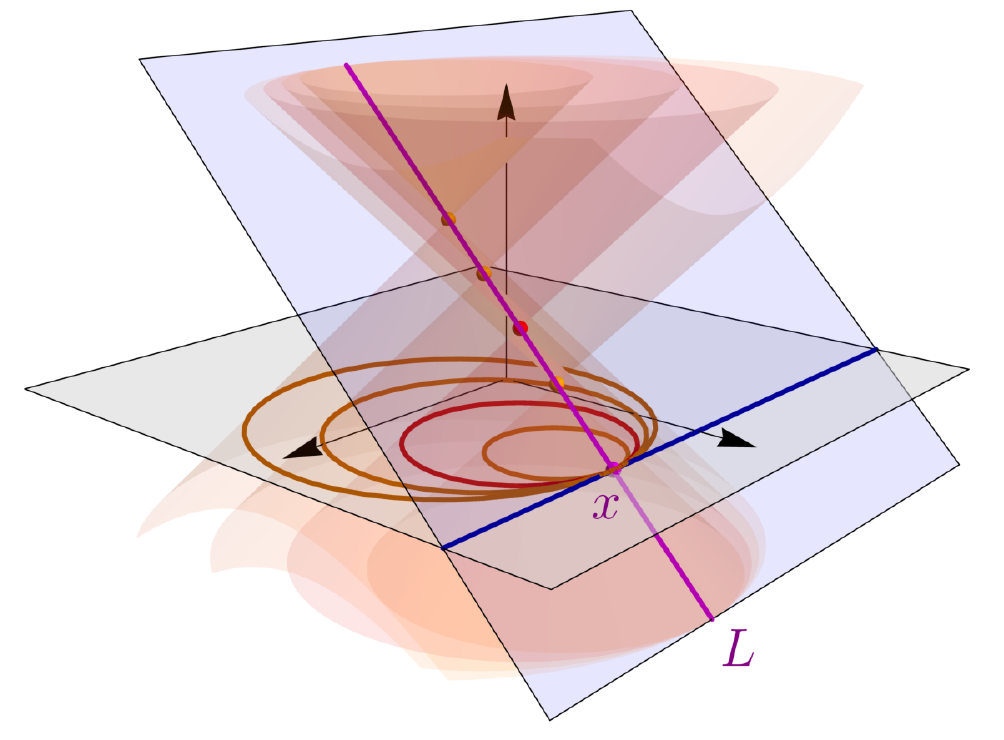}
		\caption{Contact element of a plane at a point}	
		\label{fig:contactElem}
	\end{subfigure}
	\begin{subfigure}[b]{0.495\textwidth}
		\centering
		\includegraphics[width=\textwidth]{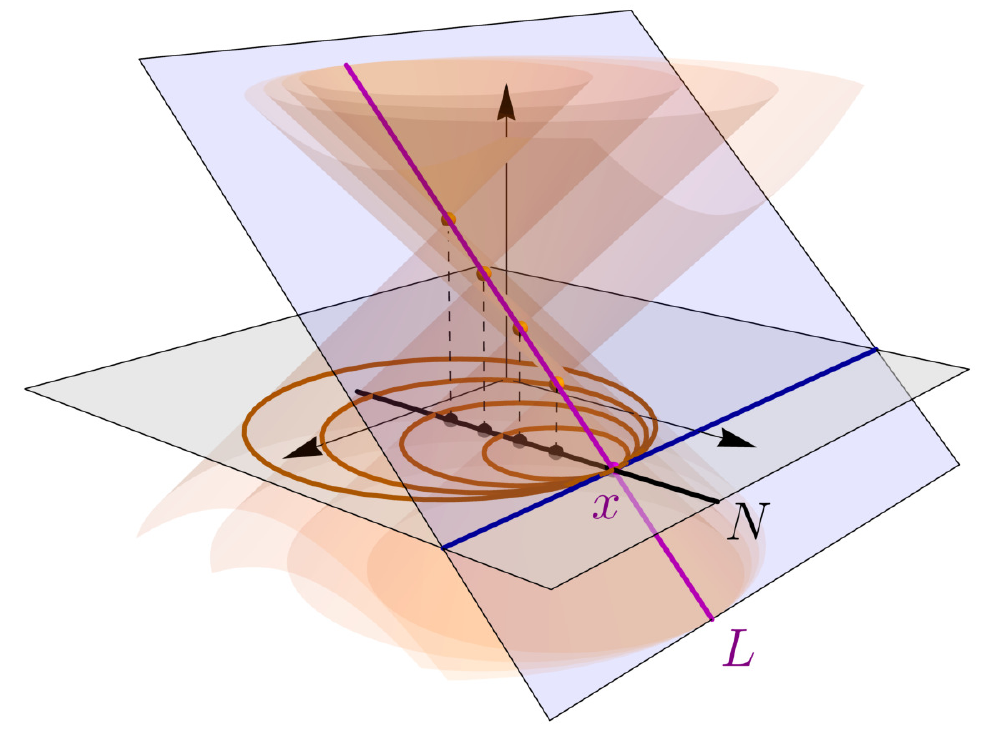}
		\caption{Normal line via orthoprojection}
		\label{fig:planeNormal}
	\end{subfigure}
	\caption{Contact elements of Euclidean space in Laguerre geometry.}
	\label{fig:contact}
\end{figure}

To recover the normal to the plane $P$ within the Euclidean space in this setup, let $L$ be a contact element of $P$ at $x$.
Consider $N := \pi_E L$, the orthoprojection of $L$ to $\mathbb{E}^3$.
Then the centers of the spheres represented by $L$ lie on $N$ \eqref{eqn:sphereOrtho}; thus, basic Euclidean geometry tells us that $N$ is orthogonal to $P$ (see Figure~\ref{fig:planeNormal}).
Intersecting $N$ with the unit sphere centered at $x$, we obtain the unit normal $\nu$ to the plane $P$ in Euclidean $3$-space.\footnote{The intersection yields two candidates for the unit normal; however, the orientation on the spheres uniquely determines the unit normal.}

In fact, we can explicitly calculate the unit normal $\nu$ to the plane from the isotropic hyperplane $I_{g,q}$ generating the plane.
Recall that $L$ is an affine null line through $x$ pointing in the direction of $g = (1, g_1, g_2, g_3)^t$.
Thus $N = \pi_E L$ is a line through $x$ pointing in the direction of $\pi_E g = g_E = (0, g_1, g_2, g_3)^t$, which is the unit Euclidean normal to the plane \eqref{eqn:planeEuc} where the unit length follows from the normalization of $g$ \eqref{eqn:norm}.

Conversely, given a unit normal $\nu$ to a plane $P$ in Euclidean space, one can easily recover $g$ via $g = \nu + \mathfrak{p}$.

\subsubsection{Surfaces and their contact elements}
Let $x: \Sigma \to \mathbb{E}^3$ be an immersion defined on some simply-connected domain $\Sigma$.
For every point $p \in \Sigma$, the tangent plane to $x$ at $x(p)$, denoted by $T_{x(p)} x$, gives rise to a contact element of $T_{x(p)} x$ at $x(p)$, represented by an affine null line $L(p) \in \mathcal{Z}$.
The contact element $L(p)$ also gives rise to the unit normal $\nu(p)$ to $T_{x(p)} x$ at $x(p)$, so that $\nu: \Sigma \to S^2 \subset \mathbb{E}^3$ is the Gauss map of $x$.
Thus a surface, together with its contact elements, can be thought of as a map into the set of contact elements $L : \Sigma \to \mathcal{Z}$, which we call the \emph{contact lift} of $x$.
In fact, we have
	\begin{equation}\label{eqn:contactLift}
		L = x + \langle \nu + \mathfrak{p}\rangle.
	\end{equation}
	
Now, we aim to understand when a surface can be recovered from a map into the set of contact elements $L : \Sigma \to \mathcal{Z}$, where we view $L$ as a null line bundle.
For this, we first define the following notions:
\begin{definition}\label{def:Legendre}
	Let $L : \Sigma \to \mathcal{Z}$ be a map into the set of contact elements, and let $G : \Sigma \to \mathbb{P}(\mathcal{L})$ such that $G(p) \parallel L(p)$ for every $p \in \Sigma$.
	$L$ is called a \emph{Legendre map} or a \emph{frontal} if $L$ satisfies the contact condition on $\Sigma$:
		\begin{equation}\label{eqn:contact}
			\langle\dif{\ell}, g \rangle = 0\text{ for any sections $\ell \in \Gamma L$ and $g \in \Gamma G$.}
		\end{equation}
	A Legendre map $L$ further satisfying the immersion condition on $\Sigma$:
		\begin{equation}\label{eqn:immersion}
			|{\dif{\ell}}|^2(X) := \langle \dif{\ell}(X),\dif{\ell}(X) \rangle = 0\text{ for any $\ell \in \Gamma L$ implies $X = 0$}
		\end{equation}
	is called a \emph{Legendre immersion}, or a $\emph{front}$.
\end{definition}

From a Legendre map $L$, one can recover the surface by taking the intersection with $\mathbb{E}^3$, i.e.\ $x = L \cap \langle \mathfrak{p} \rangle^\perp : \Sigma \to \mathbb{E}^3$.
Under the assumption that $x$ so recovered immerses, we can recover the Gauss map as follows: let $G : \Sigma \to \mathbb{P}(\mathcal{L})$ be as in Definition~\ref{def:Legendre}, and consider the affine hyperplane $\Pi$ parallel to $\mathbb{E}^3$ defined via 
	\[
		\Pi := \{x \in \mathbb{L}^4 : \langle x, \mathfrak{p} \rangle = -1\}.
	\]
Let $g: \Sigma \to \mathcal{L}$ be defined via $g := G \cap \Pi$, that is,\ $g \in \Gamma G$ with $\langle g, \mathfrak{p}\rangle = -1$.
Then we have
	\[
		g = \nu + \mathfrak{p}
	\]
for some $\nu: \Sigma \to S^2 \subset \mathbb{E}^3$.
Therefore, $\pi_E g = \nu$ is the $\emph{Gauss map}$ of $x$, and we call $g$ the \emph{lightlike Gauss map} \cite[p.\ 517]{izumiya_lightcone_2004}.

To justify our Definition~\ref{def:Legendre} of frontals and fronts, recall that $x: \Sigma \to \mathbb{R}^3$ is called a frontal if one can find a unit normal vector field $\nu: \Sigma \to S^2$, that is, $\dif{x} \perp \nu$, while $x$ is a front if $(x,\nu) : \Sigma \to \mathbb{R}^3 \times S^2$ is an immersion.
Now, let $L: \Sigma \to \mathcal{Z}$ with $x := L \cap \mathbb{R}^3$, and define $G : \Sigma \to \mathbb{P}(\mathcal{L})$ as in Definition~\ref{def:Legendre} so that $L = x + G$.
Again, let $g \in \Gamma G$ such that $\langle g, \mathfrak{p} \rangle = -1$, and define $\nu := \pi_E g : \Sigma \to S^2$, i.e.\ $g = \nu + \mathfrak{p}$.
	
To consider the contact condition \eqref{eqn:contact}, first note that the choice of section $g \in \Gamma G$ does not matter; thus take $g = \nu + \mathfrak{p}$, and let $\ell \in \Gamma L$ be any section.
Then there is a function $\alpha : \Sigma \to \mathbb{R}$ satisfying
	\[
		\ell = x + \alpha g,
	\]
so that
	\[
		\dif{\ell} = \dif{x} + \dif{\alpha} \, g+ \alpha\dif{\nu}.
	\]
Therefore,
	\[
		\langle \dif{\ell}, g \rangle = \langle \dif{x} + \dif{\alpha}\, g + \alpha\dif{\nu}, g \rangle = \langle \dif{x}, \nu \rangle.
	\]
Hence we have that $L$ is Legendre if and only if $\nu$ is a unit normal vector field of $x$, i.e.\ $x$ is a frontal.
	
For the immersion condition \eqref{eqn:immersion}, assume that $L$ is Legendre, and suppose  for any section $\ell \in \Gamma L$ that
	\[
		|{\dif{\ell}}|^2(X) = \langle \dif{\ell}(X),\dif{\ell}(X) \rangle = 0.
	\]
Equivalently, we have for any $\alpha: \Sigma \to \mathbb{R}$,
	\begin{equation}\label{eqn:almostcurvature}
		\begin{aligned}
			0 &= \langle  \dif{x}(X) + \dif{\alpha}(X) \, g+ \alpha\dif{\nu}(X),  \dif{x}(X) + \dif{\alpha} \, g+ \alpha\dif{\nu}(X)\rangle \\
				&= |{\dif{x}}|^2(X) + 2\alpha\langle\dif{x}(X),\dif{\nu}(X)\rangle + \alpha^2 |{\dif{\nu}}|^2(X) \\
				&= \langle \dif{x}(X) + \alpha \dif{\nu}(X), \dif{x}(X) + \alpha \dif{\nu}(X) \rangle.
		\end{aligned}
	\end{equation}
Thus $L$ satisfies the immersion condition \eqref{eqn:immersion} if and only if $x$ is a front.

\begin{remark}
	We remark here that \eqref{eqn:almostcurvature} holds true if $X$ is a principal direction and $\alpha$ is the radius of the corresponding curvature sphere.
	Noting that any section $\ell \in \Gamma L$ represents a sphere congruence enveloping $x$, we see that $\ell$ is a curvature sphere congruence if and only if
		\[
			|{\dif{\ell}}|^2(X) = 0
		\]
	for some non-zero $X \in \Gamma T\Sigma$.
	Moreover, such $X$ gives the principal direction.
\end{remark}

\subsubsection{Isometries fixing the origin} Finally, denoting by $O(3,1)$ the orthogonal group of $\mathbb{L}^4$, the isometries of $\mathbb{E}^3$ fixing the origin are given by all $A \in O(3,1)$ such that $A\mathfrak{p} = \mathfrak{p}$.

\subsubsection{Summary}\label{sec:Euclid}
	We conclude this section by providing a summary of the Laguerre representatives of Euclidean objects:
	\begin{itemize}
		\item The space form $\mathbb{E}^3$ is obtained via a timelike point sphere complex $\mathfrak{p}$ through the straightforward isomorphism $\mathbb{E}^3 \cong \langle \mathfrak{p} \rangle^\perp$.
		\item An oriented sphere with center $c \in \mathbb{E}^3$ and radius $r \in \mathbb{R}$ is represented by $s = c + r \mathfrak{p} \in \mathbb{L}^4$, and recovered by taking the intersection of $\mathbb{E}^3$ with the affine lightcone centered at $s$.
		\item An oriented plane $P$ is represented by an isotropic hyperplane $I_{g,q}$ determined by a null vector $g$ and a constant $q$.
		\item A contact element (elliptic sphere pencil) is represented by an affine null line $L$.
		\item The normal line $N$ to the plane $P$ is given by the orthoprojection of an affine null line $L \subset I_{g,q}$.
		\item A surface together with its tangent bundle is given by the map into the set of contact elements $\mathcal{Z}$.
	\end{itemize}

\subsection{Laguerre geometry of isotropic \texorpdfstring{$3$}{3}-space}\label{subsect:twotwo}
Now we turn our attention to isotropic $3$-space and define its basic geometric objects analogously to those of the Euclidean case using Laguerre geometry summarized in Subsection~\ref{sec:Euclid}.

Choosing a point sphere complex $\mathfrak{p}$ so that $\langle \mathfrak{p}, \mathfrak{p} \rangle = 0$, the isotropic $3$-space $\mathbb{I}^3$ can be defined via
	\[
		\mathbb{I}^3 := \langle \mathfrak{p} \rangle^\perp := \{ x \in \mathbb{L}^4 : \langle x, \mathfrak{p} \rangle = 0\}.
	\]
Now we let $\tilde{\mathfrak{p}} \in \mathcal{L}$ with $\langle \mathfrak{p}, \tilde{\mathfrak{p}} \rangle = 1$, so that
	\[
		\mathbb{L}^4 = \mathbb{I}^3 \oplus \langle \tilde{\mathfrak{p}} \rangle.
	\]
Then any $x \in \mathbb{L}^4$ can now be written as
	\[
		x = x_I + \alpha \tilde{\mathfrak{p}}
	\]
for $x_I \in \mathbb{I}^3$ and $\alpha \in \mathbb{R}$.
We define the projection $\pi_I: \mathbb{L}^4 \to \mathbb{I}^3$ via $\pi_I x = x_I$.

To make explicit connection with previously known definitions of planes and spheres of isotropic $3$-space (see, for example, \cite{pottmann_discrete_2007}), we normalize $\mathfrak{p}$ and $\tilde{\mathfrak{p}}$ so that
	\[
		\mathfrak{p} = (1,0,0,1)^t \quad\text{and}\quad \tilde{\mathfrak{p}} = \frac{1}{2}(-1,0,0,1)^t.
	\]
Let $\psi : \{(\mathbf{l}, \mathbf{x}, \mathbf{y})\} \to \mathbb{I}^3$ be a coordinate chart so that
	\[
		\mathbb{I}^3 = \{ (\mathbf{l}, \mathbf{x}, \mathbf{y}, \mathbf{l})^t \in \mathbb{L}^4\}.
	\]
Then the metric $g_{\mathbb{I}^3}$ of $\mathbb{I}^3$ is endowed from the ambient space $\mathbb{L}^4$ so that
	\[
		g_{\mathbb{I}^3} = \dif{\mathbf{x}}^2 + \dif{\mathbf{y}}^2.
	\]
\begin{remark}
	When describing or visualizing isotropic $3$-space using the coordinate space $\{(\mathbf{l}, \mathbf{x}, \mathbf{y})\}$, we will refer to the $\mathbf{l}$--direction as vertical.
\end{remark}

\begin{remark}
	We will refer to both $(\mathbf{l}, \mathbf{x}, \mathbf{y}, \mathbf{l})^t \in \mathbb{L}^4$ and $\psi^{-1}((\mathbf{l}, \mathbf{x}, \mathbf{y}, \mathbf{l})^t) = (\mathbf{l}, \mathbf{x}, \mathbf{y})$ as being in the isotropic $3$-space.
	
\end{remark}
	
\subsubsection{Spheres in isotropic space} Analogous to the Euclidean case, let an oriented sphere $S$ in isotropic space be defined via $S := \mathcal{L}_s \cap \mathbb{I}^3$, where $\mathcal{L}_s$ denotes an affine lightcone centered at any $s \in \mathbb{L}^4$.
To find the ``center'' and the ``radius'' of the sphere in isotropic space, let $s = (s_0, s_1, s_2, s_3)^t$, and note that $x = (x_0, x_1, x_2, x_3)^t \in \mathcal{L}_s$ if and only if
	\[
		(x_1 - s_1)^2 + (x_2 - s_2)^2 = ((x_0 - s_0) + (x_3 - s_3))((x_0 - s_0) - (x_3 - s_3)) > 0.
	\]
Thus, defining
	\[
		\alpha := (x_0 - s_0) + (x_3 - s_3) \quad\text{and}\quad \beta := (x_0 - s_0) - (x_3 - s_3)
	\]
we can write
	\[
		x_1 - s_1 = \sqrt{\alpha \beta} \cos \theta \quad\text{and}\quad x_2 - s_2 = \sqrt{\alpha \beta} \sin \theta
	\]
for some $\theta \in \mathbb{R}$.

For $c = (c_0, c_1, c_2, c_0)^t := \pi_I s$ and some $r  \in \mathbb{R}$, we can write
	\begin{equation}\label{eqn:sphereI3up}
		s = c + r \tilde{\mathfrak{p}},
	\end{equation}
so that $\mathcal{L}_s$ can be parametrized by $(\alpha, \beta, \theta)$ via
	\begin{align*}
		\mathcal{L}_s &= (\tfrac{1}{2}(\alpha + \beta) + s_0,  \sqrt{\alpha \beta} \cos \theta + s_1,  \sqrt{\alpha \beta} \sin \theta + s_2, \tfrac{1}{2}(\alpha - \beta) + s_3) \\
			&= (\tfrac{1}{2}(\alpha + \beta) + c_0 - \tfrac{r}{2},  \sqrt{\alpha \beta} \cos \theta + c_1,  \sqrt{\alpha \beta} \sin \theta + c_2, \tfrac{1}{2}(\alpha - \beta) + c_0 + \tfrac{r}{2}).
	\end{align*}
Then for $x \in S = \mathcal{L}_s \cap \mathbb{I}^3$, we need
	\[
		\tfrac{1}{2}(\alpha + \beta) + c_0 - \tfrac{r}{2} = \tfrac{1}{2}(\alpha - \beta) + c_0 + \tfrac{r}{2},
	\]
or equivalently, $\beta = r$.
Thus, we have
	\[
		S = (\tfrac{\alpha}{2} + c_0,  \sqrt{\alpha r} \cos \theta + c_1,  \sqrt{\alpha r} \sin \theta + c_2, \tfrac{\alpha}{2} + c_0),
	\]
or in terms of the coordinates $(\mathbf{l}, \mathbf{x}, \mathbf{y})$,
	\begin{equation}\label{eqn:sphereI3}
		(\mathbf{x} - c_1)^2 + (\mathbf{y} - c_2)^2 = \alpha r = 2r (\mathbf{l}  - c_0).
	\end{equation}
We say that the sphere given by \eqref{eqn:sphereI3up} or \eqref{eqn:sphereI3} has \emph{center} $c = (c_0, c_1, c_2)$ and \emph{radius} $r$.

If $r \neq 0$, then this gives a \emph{sphere of non-zero radius} in isotropic $3$-space, also called an \emph{i-sphere of the parabolic type} in \cite[Equation (3)]{pottmann_discrete_2007}, which is a paraboloid of rotation in the coordinate space.
In particular, if $r = 1$, then we call it a \emph{unit sphere}.
Furthermore, if a unit sphere is centered at the origin, then
	\[
		\mathbf{x}^2 + \mathbf{y}^2 = 2 \mathbf{l},
	\]
which coincides with the \emph{isotropic unit sphere} given in \cite[Equation (5)]{pottmann_discrete_2007}.
It can be checked that the metric of these spheres induced by the ambient space is Riemannian; hence, we refer to these spheres as \emph{spacelike spheres}.

On the other hand, if $r \to 0$ and $c_0 \to -\infty$ at suitable rates so that $r c_0 \to R \in \mathbb{R}_{\geq 0}$, then we also have
	\[
		(\mathbf{x} - c_1)^2 + (\mathbf{y} - c_2)^2 = R.
	\]
These are \emph{spheres of zero radius} in isotropic $3$-space, also called the \emph{i-spheres of the cylindrical type}, and are vertical lines or a circular cylinders in the coordinate space.
Again, one can check that the metric of these spheres induced by the ambient space is degenerate, and we refer to these spheres as \emph{lightlike spheres}.

\begin{figure}
	\centering
	\includegraphics[width=0.48\textwidth]{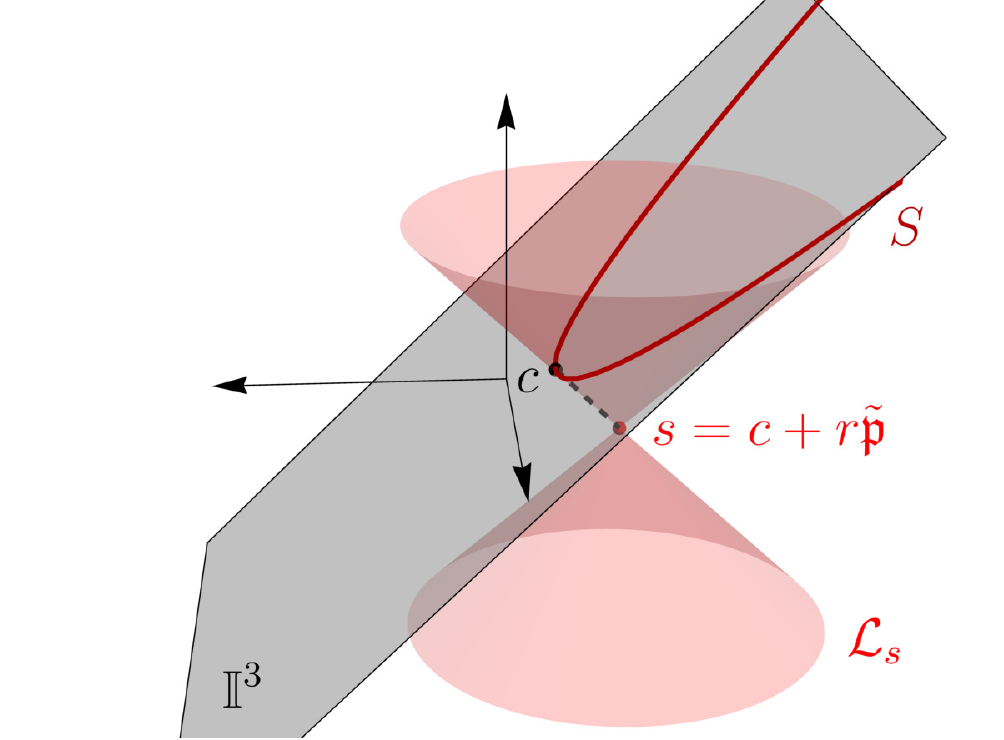}
	\caption{Spheres in isotropic geometry via isotropy projection}
	\label{fig:i3sphere}
\end{figure}

\subsubsection{Planes in isotropic space} Analogous to the Euclidean case, oriented planes $P$ in isotropic $3$-space are given by the intersection of affine isotropic hyperplanes $I_{g,q}$ and $\mathbb{I}^3 \cong \langle \mathfrak{p} \rangle^\perp$.
Explicitly, let an affine isotropic hyperplane $I_{g,q}$ be as in \eqref{eqn:plane} for some lightlike $g = (g_0, g_1, g_2, g_3)^t$.
Then $x \in P = I_{g,q} \cap \mathbb{I}^3$ if and only if 
	\begin{equation}\label{eqn:planeI3}
		q = \mathbf{l} (g_3 - g_0) + \mathbf{x} g_1 + \mathbf{y} g_2
	\end{equation}
in the coordinate space.

If $g_0 = g_3$ so that $g \in \mathbb{I}^3$, then we obtain vertical planes in the coordinate space, which we refer to as \emph{lightlike planes} since the metric induced on the plane is degenerate (see Figure~\ref{fig:i3isoplane}); if $g_0 \neq g_3$ so that $g \not \in \mathbb{I}^3$, then we obtain non-vertical planes in the coordinate space, which we refer to as \emph{spacelike planes} (see Figure~\ref{fig:i3plane}).

Since $\langle g, \mathfrak{p} \rangle = g_3 -g_0$, in the case of spacelike planes, we normalize $g$ without loss of generality so that $\langle g, \mathfrak{p} \rangle = 1$.
Then $g = (g_0, g_1, g_2, g_0 + 1)^t$, and the equation of the plane becomes
	\begin{equation}\label{eqn:planeI3norm}
		q = \mathbf{l} + \mathbf{x} g_1 + \mathbf{y} g_2.
	\end{equation}

\begin{remark}
	Note that one cannot have isotropic hyperplanes in $\mathbb{L}^4$ with $g \in \mathbb{E}^3$; thus, there are no lightlike planes in $\mathbb{E}^3$.
	In fact, when considering Laguerre geometry of Minkowski $3$-space $\mathbb{L}^3$ in a similar fashion (via a unit spacelike point sphere complex), the spacelike planes of $\mathbb{L}^3$ are obtained by the isotropic hyperplanes $I_{g,q}$ defined by $g \not\in \mathbb{L}^3$, while the lightlike planes are obtained by those $g \in \mathbb{L}^3$.
\end{remark}

\begin{figure}
	\begin{subfigure}[b]{0.495\textwidth}
		\centering
		\includegraphics[width=\textwidth]{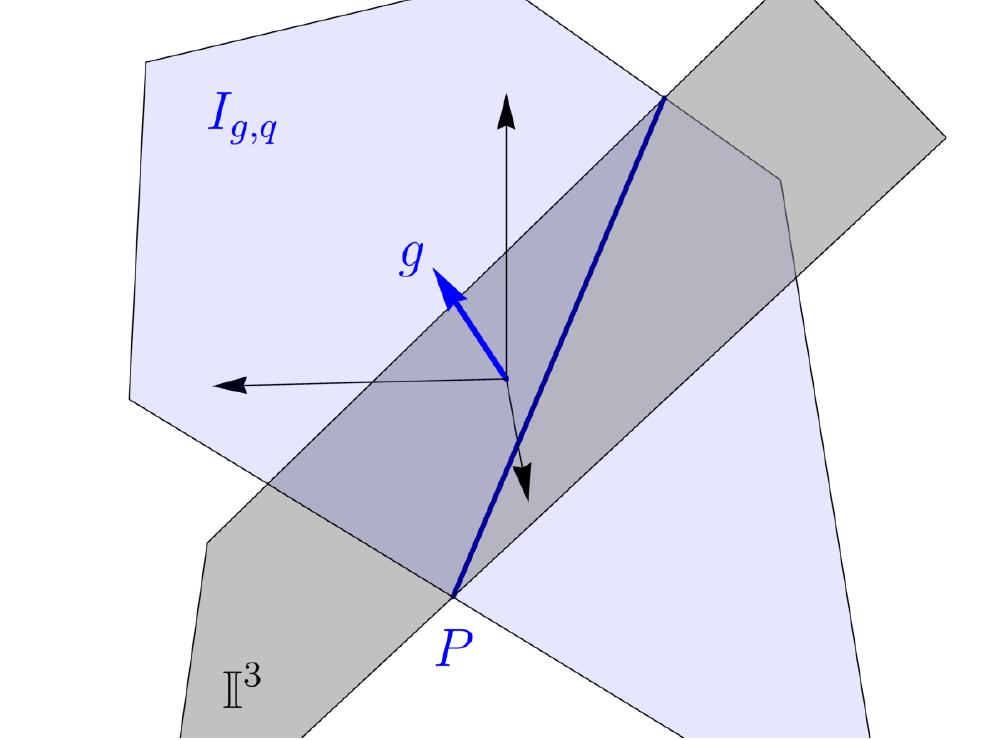}
		\caption{Spacelike planes}
		\label{fig:i3plane}
	\end{subfigure}
	\hfill
	\begin{subfigure}[b]{0.495\textwidth}
		\centering
		\includegraphics[width=\textwidth]{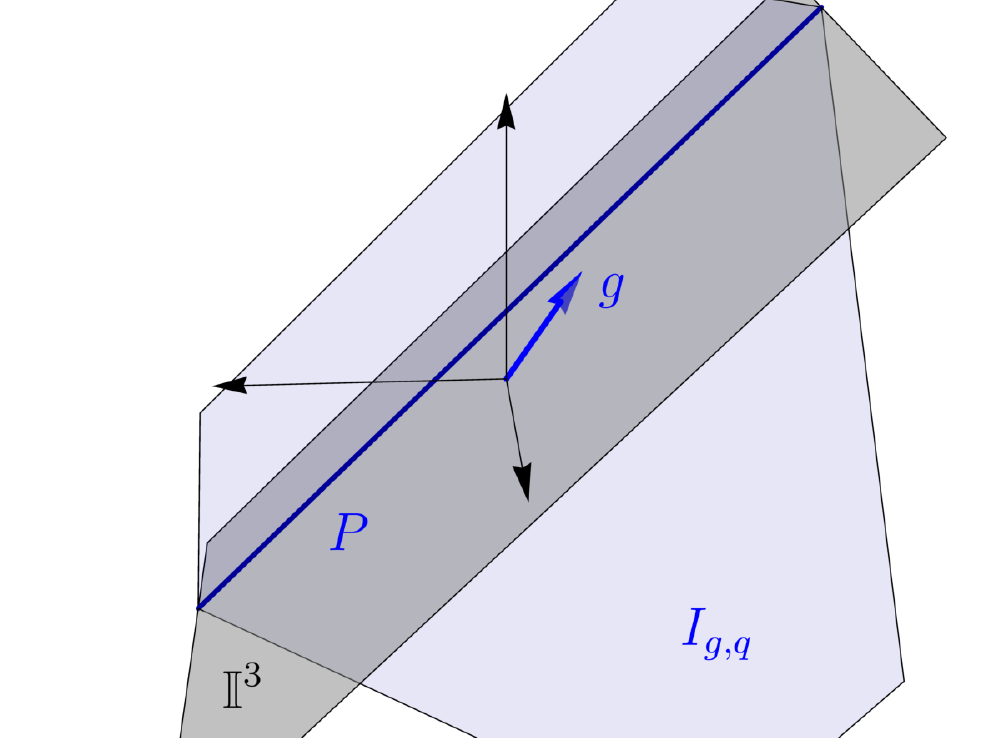}
		\caption{Lightlike planes}
		\label{fig:i3isoplane}
	\end{subfigure}
	\caption{Spacelike planes and lightlike planes of isotropic space  via isotropic hyperplanes in Laguerre geometry.}
	\label{fig:i3planes}
\end{figure}

\subsubsection{Contact elements and normals}
Since we are mainly interested in spacelike surfaces, i.e.\ surfaces whose induced metric is Riemannian, we will only discuss contact elements for spacelike planes.
Let $P$ be a spacelike plane in $\mathbb{I}^3$ given by affine isotropic hyperplane $I_{g,q}$ with lightlike $g = (g_0, g_1, g_2, g_0 + 1)^t$.
As in the Euclidean case, the sphere $S$ given by $s \in \mathbb{L}^4$ is in oriented contact with $P$ if and only if $\mathcal{L}_s$ is tangent to $I_{g,q}$, or equivalently, $s \in I_{g,q}$ (see Figure~\ref{fig:i3contact1}).

Now, if we let $L$ be the affine line along which $I_{g,q}$ and $\mathcal{L}_s$ is tangent as in Section~\ref{sect:contactE3}, then we have again that $L$ is an affine null line pointing in the direction of $g \not\in \mathbb{I}^3$.
Thus, we can uniquely find $x = L \cap \mathbb{I}^3$  (see Figure~\ref{fig:i3contact2}), and see that $L$ represents all spheres that are in oriented contact with the plane $P$ at the point $x$ (see Figure~\ref{fig:i3contactElem}).

Let $N := \pi_I L$ be the projection of $L$, and for $g_I := \pi_I g$, write $g$ as
	\[
		g = g_I + \alpha \tilde{\mathfrak{p}}.
	\]
Then $N$ is a line in the direction of $g_I$, and the centers of the spheres represented by $L$ lie in $N$ \eqref{eqn:sphereI3up}.
We then take the unique intersection $\nu$ of $N$ with the unit sphere centered at $x$: we call $\nu$ the \emph{unit normal} to the plane $P$  (see Figure~\ref{fig:i3planeNormal}).

\begin{figure}
	\begin{subfigure}[b]{0.495\textwidth}
		\centering
		\includegraphics[width=\textwidth]{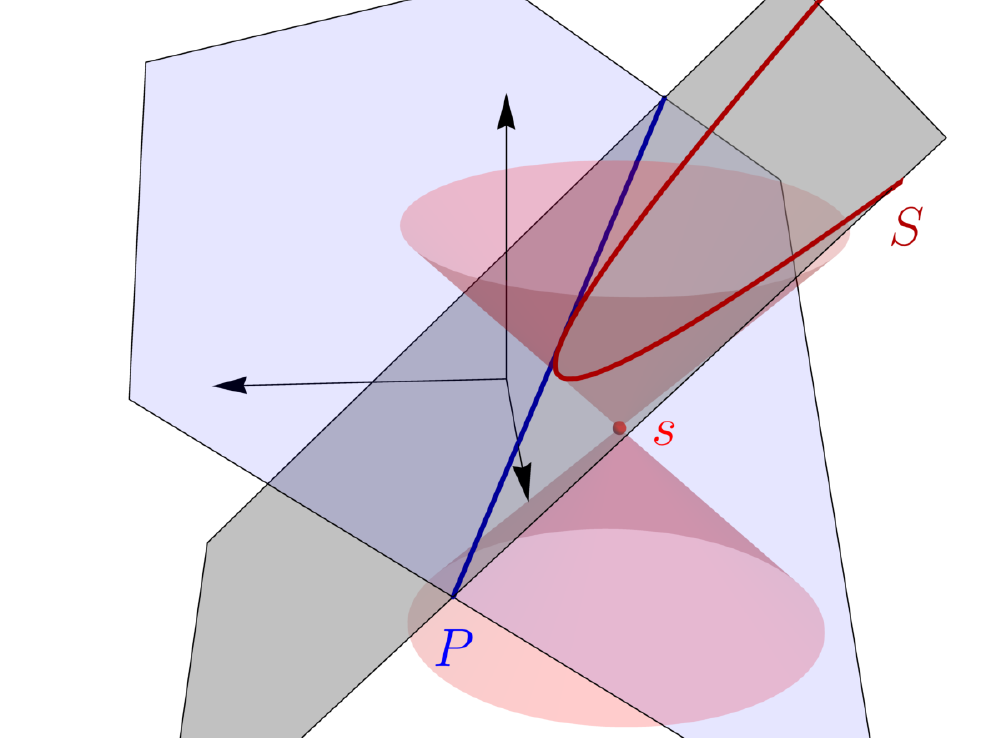}
		\caption{Spheres and planes in oriented contact}
		\label{fig:i3contact1}
	\end{subfigure}
	\begin{subfigure}[b]{0.495\textwidth}
		\centering
		\includegraphics[width=\textwidth]{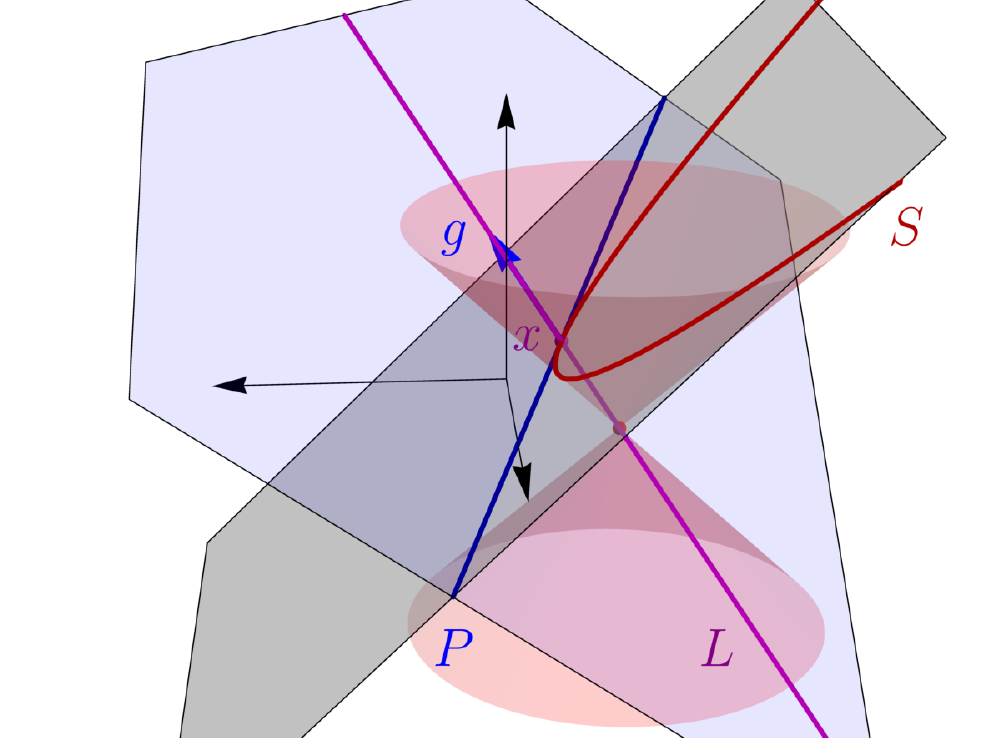}
		\caption{Point of contact via affine null line}
		\label{fig:i3contact2}
	\end{subfigure}
	\begin{subfigure}[b]{0.495\textwidth}
		\centering
		\includegraphics[width=\textwidth]{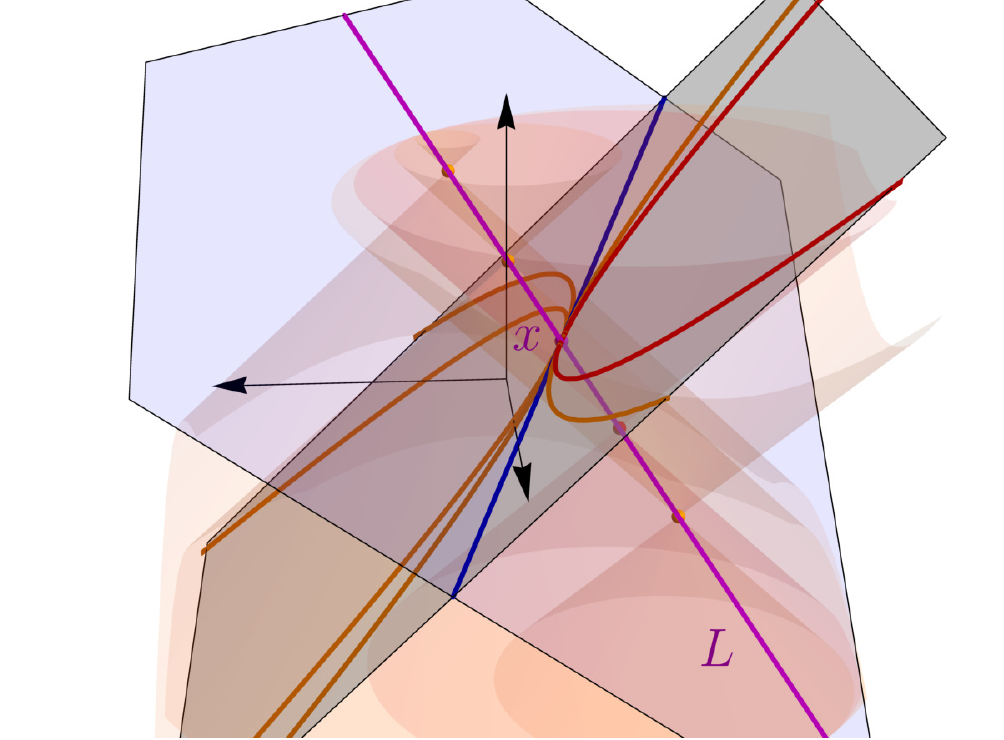}
		\caption{Contact element of a plane at a point}	
		\label{fig:i3contactElem}
	\end{subfigure}
	\begin{subfigure}[b]{0.495\textwidth}
		\centering
		\includegraphics[width=\textwidth]{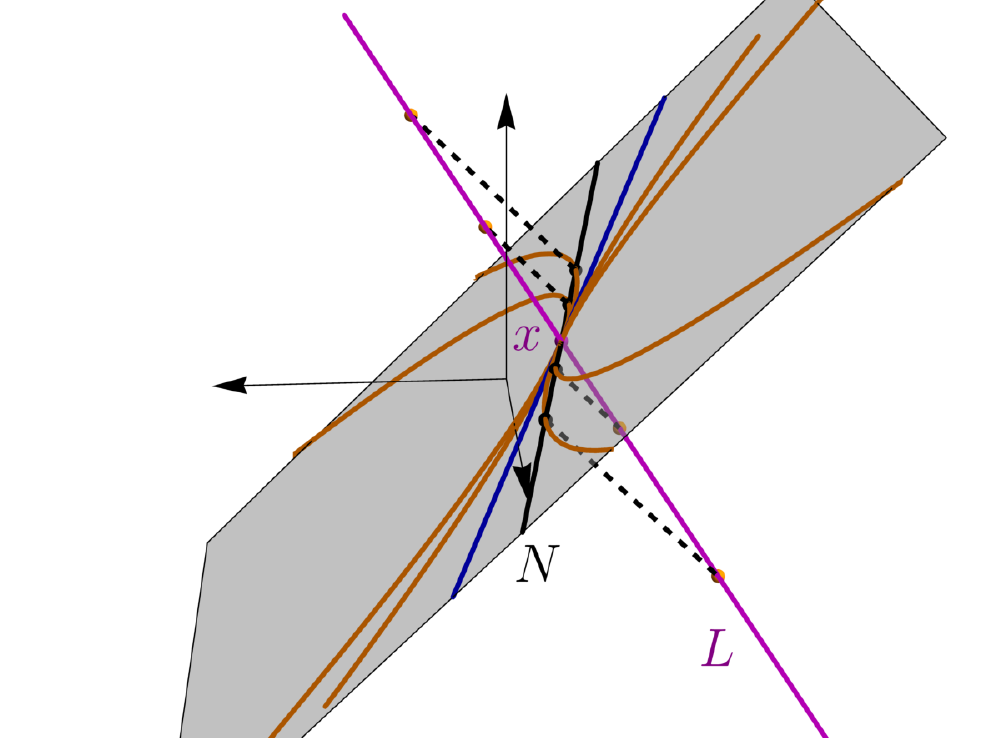}
		\caption{Normal line via projection}
		\label{fig:i3planeNormal}
	\end{subfigure}
	\caption{Contact elements of isotropic space in Laguerre geometry.}
	\label{fig:I3contact}
\end{figure}

To find $\nu$ explicitly, we first see that
	\[
		1 = \langle g, \mathfrak{p} \rangle = \langle g_I, \mathfrak{p} \rangle + \alpha \langle \tilde{\mathfrak{p}}, \mathfrak{p} \rangle
			 = \alpha,
	\] 
since $g_I \in \mathbb{I}^3$.
Hence
	\[
		g_I = g - \tilde{\mathfrak{p}} = (g_0 + \tfrac{1}{2}, g_1, g_2, g_0 + \tfrac{1}{2})^t.
	\]
To see when $N$ intersects the unit sphere centered at $x =: (c_0, c_1, c_2, c_0)^t$, we parametrize $N$ via
	\[
		N(a) = a g_I + x = a (g_0 + \tfrac{1}{2}, g_1, g_2, g_0 + \tfrac{1}{2})^t + (c_0, c_1, c_2, c_0)^t,
	\]
so that
	\[
		\psi^{-1}(N(a) )= a (g_0 + \tfrac{1}{2}, g_1, g_2) + (c_0, c_1, c_2).
	\]
Now calculate using \eqref{eqn:sphereI3} with center $\psi^{-1} (x) = (c_0, c_1, c_2)$ and $r = 1$ that
	\[
		a (g_1^2 + g_2^2) =  2 g_0 + 1.
	\]
However, since $g$ is lightlike, we have
	\[
		a = \frac{2 g_0 + 1}{g_1^2 + g_2^2} = \frac{-g_1^2 - g_2^2}{g_1^2 + g_2^2} = -1,
	\]
so that
	\begin{equation}\label{eqn:normalI3}
		\nu = -g_I = (-g_0 - \tfrac{1}{2}, -g_1, -g_2) = \left(\frac{g_1^2 + g_2^2}{2},  -g_1, -g_2\right).
	\end{equation}
Thus, we can explicitly calculate the unit normal to any spacelike plane given in the coordinate space:
\begin{proposition}\label{prop:norm}
	Let $P$ be a spacelike plane in isotropic $3$-space given via
		\[
			\alpha \mathbf{l} + \beta \mathbf{x} + \gamma \mathbf{y} = q
		\]
	for some $\alpha, \beta, \gamma, q \in \mathbb{R}$.
	Then the unit normal $\nu$ to $P$ is given by
		\[
			\nu = \left(\frac{\beta^2 + \gamma^2}{2\alpha^2}, -\frac{\beta}{\alpha}, -\frac{\gamma}{\alpha}\right).
		\]
\end{proposition}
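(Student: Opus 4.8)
The plan is to match the given affine equation with the normalized plane equation \eqref{eqn:planeI3norm} and then invoke the explicit computation \eqref{eqn:normalI3} carried out above, so that the proof reduces to a single substitution.

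First I would recall that by \eqref{eqn:planeI3} a plane $P = I_{g,q} \cap \mathbb{I}^3$ with lightlike $g = (g_0, g_1, g_2, g_3)^t$ is cut out in coordinates by $q = \mathbf{l}(g_3 - g_0) + \mathbf{x} g_1 + \mathbf{y} g_2$, and that $\langle g, \mathfrak{p} \rangle = g_3 - g_0$. Comparing with $\alpha \mathbf{l} + \beta \mathbf{x} + \gamma \mathbf{y} = q$, the coefficient vector corresponds to the choice $g_3 - g_0 = \alpha$, $g_1 = \beta$, $g_2 = \gamma$; since $P$ is spacelike, $g \notin \mathbb{I}^3$, equivalently $\langle g, \mathfrak{p} \rangle = \alpha \neq 0$, so dividing the equation through by $\alpha$ is legitimate. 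Moreover, since $I_{g,q} = I_{\lambda g, \lambda q}$ for every $\lambda \neq 0$ directly from \eqref{eqn:plane}, this rescaling changes neither the isotropic hyperplane nor the oriented plane it determines, hence not the unit normal.

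Dividing by $\alpha$ turns the equation into $\mathbf{l} + \tfrac{\beta}{\alpha}\mathbf{x} + \tfrac{\gamma}{\alpha}\mathbf{y} = \tfrac{q}{\alpha}$, which is exactly \eqref{eqn:planeI3norm} with $g_1 = \beta/\alpha$ and $g_2 = \gamma/\alpha$ (the constant is irrelevant, as it does not enter \eqref{eqn:normalI3}). Substituting these into \eqref{eqn:normalI3} gives
\[
\nu = \left( \frac{g_1^2 + g_2^2}{2},\, -g_1,\, -g_2 \right) = \left( \frac{\beta^2 + \gamma^2}{2\alpha^2},\, -\frac{\beta}{\alpha},\, -\frac{\gamma}{\alpha} \right),
\]
which is the asserted formula; as a consistency check, the right-hand side is invariant under $(\alpha, \beta, \gamma, q) \mapsto (\lambda\alpha, \lambda\beta, \lambda\gamma, \lambda q)$, in agreement with the previous paragraph.

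I do not expect any real obstacle here: the one point deserving a word of care is the normalization, i.e.\ that the raw coefficient vector of an un-normalized plane equation must first be rescaled to the lightlike representative with $\langle g, \mathfrak{p} \rangle = 1$ before \eqref{eqn:normalI3} is applicable, and that doing so is harmless by \eqref{eqn:plane}. Beyond that it is a one-line substitution.
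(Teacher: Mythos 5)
Your proof is correct and follows essentially the same route as the paper: use spacelikeness to get $\alpha\neq 0$, divide by $\alpha$ to match the normalized form \eqref{eqn:planeI3norm}, and substitute $g_1=\beta/\alpha$, $g_2=\gamma/\alpha$ into \eqref{eqn:normalI3}. The extra remarks on rescaling invariance of $I_{g,q}$ are a harmless (and accurate) elaboration of the paper's ``without loss of generality'' normalization.
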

\begin{proof}
	Since $P$ is a spacelike plane, \eqref{eqn:planeI3} implies $\alpha \neq 0$, allowing us to divide the equation giving the plane by $\alpha$ so that
		\[
			\mathbf{l} + \frac{\beta}{\alpha} \mathbf{x} + \frac{\gamma}{\alpha} \mathbf{y} = \frac{q}{\alpha}.
		\]
	Comparing with \eqref{eqn:planeI3norm}, we see $g_1 = \frac{\beta}{\alpha}$ and $g_2 = \frac{\gamma}{\alpha}$, and substituting these into \eqref{eqn:normalI3} gives the desired conclusion.
\end{proof}

\subsubsection{Surfaces and their contact elements} 
Now let $x : \Sigma \to \mathbb{I}^3$ be a spacelike immersion surface, that is, for every $p \in \Sigma$, the tangent plane $T_{x(p)} x$ is spacelike.
The \emph{Gauss map} of $x$ is then given by taking the unit normal to the tangent plane $T_{x(p)} x$ as in Proposition~\ref{prop:norm}.
(Note that our definition of the Gauss map (almost)\footnote{The only difference arises in the choice of the radius of the unit sphere \cite[Equation~(7.1)]{strubecker_differentialgeometrie_1942}.} coincides with the original definition of \emph{spherical image} in \cite[\S~76]{strubecker_differentialgeometrie_1942} (see also \cite[\S~28]{strubecker_differentialgeometrie_1942-1}).)
In addition, $T_{x(p)} x$ again gives rise to a contact element $L(p) \in \mathbb{Z}$ such that $x(p) = L(p) \cap \mathbb{I}^3$.
Thus, as in the Euclidean case, a surface and its contact element is represented by a map into the set of contact elements, $L: \Sigma \to \mathcal{Z}$, which we refer to as the contact lift of $x$.

Conversely, let $L: \Sigma \to \mathcal{Z}$ be a Legendre immersion into the set of contact elements.
Unlike the Euclidean case, we need to assume some regularity conditions on $L$:
\begin{assumption}
	On every $p \in \Sigma$:
	\begin{enumerate}
		\item $L(p) \cap \mathbb{I}^3 \neq \varnothing$ to ensure that the surface is defined, and
		\item $L(p) \not\subset \mathbb{I}^3$ to ensure that the tangent plane of the surface is spacelike.
	\end{enumerate}
	Note that both of these conditions are trivially satisfied in the Euclidean case.
\end{assumption}

Under the non-degeneracy condition, we can recover the surface $x : \Sigma \to \mathbb{I}^3$ from $L$ by taking the intersection with $\mathbb{I}^3$, that is, $x := L \cap \langle \mathfrak{p} \rangle^\perp$.
We can also recover the Gauss map by considering the unique map into the projective light cone $G: \Sigma \to \mathbb{P}(\mathcal{L})$ so that $G(p) \parallel L(p)$ for every $p \in \Sigma$.
For a hyperplane $\Pi$ parallel to $\mathbb{I}^3$ defined by
	\[
		\Pi := \{ x \in \mathbb{L}^4 : \langle x, \mathfrak{p} \rangle = 1\},
	\]
let $g: \Sigma \to \mathcal{L}$ be defined via $g := G \cap \Pi$.
Using the decomposition $\mathbb{L}^4 =  \mathbb{I}^3 \oplus \langle \tilde{\mathfrak{p}} \rangle$, the condition $\langle g, \mathfrak{p} \rangle = 1$ tells us that
	\[
		g = g_I + \tilde{\mathfrak{p}}.
	\]
The Gauss map $\nu$ is thus given by $\pi_I g = g_I$ while we call $g$ the \emph{lightlike Gauss map} of $x$.

In fact, $L$ being Legendre gives important information about the lightlike Gauss map:
\begin{lemma}\label{lemm:lightlikeGauss}
	Suppose that $x: \Sigma \to \mathbb{I}^3$ is a spacelike immersion, and let $g: \Sigma \to \mathbb{L}$ be its lightlike Gauss map.
	Then we have
		\[
			\langle \dif{x}, g \rangle = 0.
		\]
\end{lemma}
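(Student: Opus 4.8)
The plan is to exploit the contact condition \eqref{eqn:contact} satisfied by the Legendre lift $L$ together with the explicit decomposition $g = g_I + \tilde{\mathfrak{p}}$ of the lightlike Gauss map, mirroring the computation carried out in the Euclidean case in Section~\ref{sect:contactE3}. First I would recall that the contact lift $L : \Sigma \to \mathcal{Z}$ of the spacelike immersion $x$ is, by construction, the affine null line through $x$ in the direction $G$, so that any section $\ell \in \Gamma L$ can be written as $\ell = x + \alpha g$ for some function $\alpha : \Sigma \to \mathbb{R}$, where $g \in \Gamma G$ is the section normalized by $\langle g, \mathfrak{p}\rangle = 1$. Since the choice of section of $G$ does not affect the contact condition, we may fix this $g = g_I + \tilde{\mathfrak{p}}$ throughout.

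Next I would differentiate: taking $\ell = x + \alpha g$ gives $\dif{\ell} = \dif{x} + \dif{\alpha}\, g + \alpha \dif{g}$, and hence
\[
	\langle \dif{\ell}, g \rangle = \langle \dif{x}, g \rangle + \dif{\alpha}\,\langle g, g \rangle + \alpha \langle \dif{g}, g \rangle.
\]
The second term vanishes because $g$ is lightlike, i.e.\ $\langle g, g \rangle = 0$; the third term vanishes because $\langle \dif{g}, g \rangle = \tfrac12 \dif{\langle g, g\rangle} = 0$. Therefore the contact condition \eqref{eqn:contact}, $\langle \dif{\ell}, g\rangle = 0$, collapses exactly to $\langle \dif{x}, g \rangle = 0$, which is the claim. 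I would also remark that this is compatible with the Euclidean computation: there one finds $\langle \dif\ell, g\rangle = \langle \dif x, \nu\rangle$ because $\langle \mathfrak{p}, \mathfrak{p}\rangle = -1 \neq 0$ contributes; here $\langle \mathfrak{p},\mathfrak p\rangle = 0$ but $g$ is still null, so the analogous identity reads $\langle \dif x, g\rangle = 0$ directly, and since $x$ takes values in $\mathbb{I}^3 = \langle\mathfrak{p}\rangle^\perp$ and $g = g_I + \tilde{\mathfrak{p}}$ with $\dif x$ having no $\tilde{\mathfrak{p}}$-component, this is equivalently $\langle \dif x, g_I\rangle = 0$, expressing that $\nu = g_I$ is normal to the surface in the usual isotropic sense.

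There is essentially no obstacle here — the only thing to be careful about is making sure the Legendre hypothesis is actually invoked (it is precisely \eqref{eqn:contact} that we use) and that the normalization $\langle g, \mathfrak{p}\rangle = 1$ used to single out the section $g = g_I + \tilde{\mathfrak p}$ is the one defined just above the lemma; everything else is the two-line observation that $g$ being null kills both extra terms. If one wanted to avoid even referencing $\Gamma L$, one could instead take $\ell = x + \alpha g$ with $\alpha \equiv 0$, i.e.\ $\ell = x$ itself — but $x$ need not be a section of $L$ as a line bundle in the projective sense unless one allows the zero shift, so it is cleaner to keep general $\alpha$ and let the null-vector identities do the work.
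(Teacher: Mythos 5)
Your proof is correct and follows essentially the same route as the paper: write any section of the contact lift as $\ell = x + \alpha g$ and observe that the contact condition $\langle \dif{\ell}, g\rangle = 0$ reduces to $\langle \dif{x}, g\rangle = 0$ since $g$ is null. The extra remarks comparing with the Euclidean computation are fine but not needed.
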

\begin{proof}
	Let $L:\Sigma \to \mathcal{Z}$ be the contact lift of $x$, so that $L$ is a Legendre immersion.
	Take any section $\ell \in \Gamma L$, so that we can write
		\[
			\ell = x + \alpha g
		\]
	for some $\alpha : \Sigma \to \mathbb{R}$.
	Then the contact condition \eqref{eqn:contact} reads
		\[
			0 = \langle \dif{\ell}, g \rangle = \langle \dif{x} + \dif{\alpha} g + \alpha \dif{g}, g \rangle = \langle \dif{x}, g \rangle,
		\]
	giving us the desired conclusion.
\end{proof}

\subsubsection{Isometries in isotropic space -- Hermitian matrix model} 
Finally, the isometries of isotropic $3$-space fixing the origin are given analogously by all $A \in \SOthreeone$ such that $A\mathfrak{p} = \mathfrak{p}$.
Recalling that a spacelike plane $P$ through the origin is defined by $I_{g,0}$ for lightlike $g \not\in \mathbb{I}^3$, it is straightforward to observe that:
\begin{lemma}\label{lemm:spPlanes}
	In isotropic $3$-space, there is only one spacelike plane through the origin up to origin-fixing isometries.
\end{lemma}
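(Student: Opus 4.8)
The plan is to reduce the statement to the transitivity of the stabilizer of $\mathfrak{p}$ in $\SOthreeone$ on normalized null representatives, and then produce the required isometry by hand via a Witt-type extension. Recall that a spacelike plane through the origin is $P = I_{g,0}$ for a lightlike $g$ normalized by $\langle g,\mathfrak{p}\rangle = 1$ (as in \eqref{eqn:planeI3norm}), and that an origin-fixing isometry means $A\in\SOthreeone$ with $A\mathfrak{p}=\mathfrak{p}$. First I would observe that such an $A$ sends $I_{g,0}$ to $I_{Ag,0}$, since $\langle Ax,Ag\rangle=\langle x,g\rangle$, and that $\langle Ag,\mathfrak{p}\rangle=\langle g,A^{-1}\mathfrak{p}\rangle=\langle g,\mathfrak{p}\rangle=1$, so $Ag$ is again a normalized representative of a spacelike plane through the origin. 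Hence it suffices to show: for any lightlike $g,g'\in\mathbb{L}^4$ with $\langle g,\mathfrak{p}\rangle=\langle g',\mathfrak{p}\rangle=1$ there is $A\in\SOthreeone$ with $A\mathfrak{p}=\mathfrak{p}$ and $Ag=g'$ (equivalently, every such $g$ lies in the orbit of $\tilde{\mathfrak{p}}$, which corresponds to the horizontal plane $\mathbf{l}=0$).

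To build $A$, I would use the orthogonal splitting induced by $\mathfrak{p}$ and $g$. Since $\langle\mathfrak{p},g\rangle=1\neq 0$, the plane $U:=\Span\{\mathfrak{p},g\}$ is nondegenerate, so $\mathbb{L}^4=U\oplus U^{\perp}$ orthogonally; comparing signatures, $U$ is Lorentzian and $U^{\perp}$ is positive definite of dimension $2$, and the same holds for $U':=\Span\{\mathfrak{p},g'\}$ and $U'^{\perp}$. Define $A$ on $U$ as the unique linear map with $A\mathfrak{p}=\mathfrak{p}$ and $Ag=g'$; this is an isometry $U\to U'$ because the Gram matrices of $(\mathfrak{p},g)$ and $(\mathfrak{p},g')$ agree (all entries vanish except $\langle\mathfrak{p},g\rangle=\langle\mathfrak{p},g'\rangle=1$). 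On $U^{\perp}$ let $A$ be any linear isometry onto $U'^{\perp}$, which exists as both are $2$-dimensional Euclidean. Then $A\in O(3,1)$, $A\mathfrak{p}=\mathfrak{p}$, and $Ag=g'$, so $A(I_{g,0})=I_{g',0}$; composing the $U^{\perp}$-component with a reflection if necessary arranges $\det A=1$, so $A\in\SOthreeone$ as required.

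I do not expect a serious obstacle: the argument is essentially bookkeeping around one structural fact, namely that the spacelike hypothesis $g\notin\mathbb{I}^3$, i.e.\ $\langle g,\mathfrak{p}\rangle\neq 0$, is exactly what forces $\Span\{\mathfrak{p},g\}$ to be Lorentzian and hence $\Span\{\mathfrak{p},g\}^{\perp}$ to be Euclidean, which is what permits the extension (for lightlike planes, $g\in\mathbb{I}^3$, this breaks down, consistent with those being genuinely different). The only point needing a word of care is the normalization $\langle g,\mathfrak{p}\rangle=1$, which pins down the orientation of the plane and thus makes ``one spacelike plane'' literally correct. As an alternative I could instead exhibit $A$ explicitly as a null rotation fixing $\mathfrak{p}$, which in the coordinate model $(\mathbf{l},\mathbf{x},\mathbf{y})$ acts as a shear $(\mathbf{l},\mathbf{x},\mathbf{y})\mapsto(\mathbf{l}+p\mathbf{x}+q\mathbf{y},\mathbf{x},\mathbf{y})$ composed with a rotation in the $\mathbf{x}\mathbf{y}$-plane, and check directly via \eqref{eqn:planeI3norm} and Proposition~\ref{prop:norm} that suitable $p,q$ carry $\mathbf{l}+g_1\mathbf{x}+g_2\mathbf{y}=0$ to $\mathbf{l}=0$; the coordinate-free version above merely avoids writing the matrix.
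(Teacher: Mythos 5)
Your argument is correct. Note, however, that the paper itself offers no proof of this lemma: it is stated as a ``straightforward'' observation immediately after the identification of a spacelike plane through the origin with $I_{g,0}$ for lightlike $g\notin\mathbb{I}^3$, so what you have really done is supply the missing justification, namely the transitivity of the stabilizer of $\mathfrak{p}$ in $\SOthreeone$ on null vectors normalized by $\langle g,\mathfrak{p}\rangle=1$. Your reduction is clean (orthogonality of $A$ and $A\mathfrak{p}=\mathfrak{p}$ give that $A$ preserves $\mathbb{I}^3$ and carries $I_{g,0}$ to $I_{Ag,0}$ with the normalization intact), and the Witt-type extension is sound: $\Span\{\mathfrak{p},g\}$ has Gram matrix $\begin{psmallmatrix}0&1\\1&0\end{psmallmatrix}$, hence is Lorentzian with positive definite $2$-dimensional complement, the Gram matrices of $(\mathfrak{p},g)$ and $(\mathfrak{p},g')$ agree, and the determinant can be corrected by a reflection in the Euclidean complement without disturbing $\mathfrak{p}$ or $g$. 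You also correctly isolate the structural point that $\langle g,\mathfrak{p}\rangle\neq 0$ (spacelikeness of the plane) is what makes the splitting nondegenerate, and your remark that the normalized $g$ is uniquely recovered from the oriented plane means transitivity on representatives does give the statement about planes. Your alternative explicit route --- a shear $(\mathbf{l},\mathbf{x},\mathbf{y})\mapsto(\mathbf{l}+p\mathbf{x}+q\mathbf{y},\mathbf{x},\mathbf{y})$ composed with a rotation in the $\mathbf{x}\mathbf{y}$-plane --- is essentially what the paper's subsequent Hermitian-model parametrization of $\SUoneohone$ in Lemma~\ref{lemm:isometry} makes visible (the off-diagonal entry $\alpha+i\beta$ is exactly the shear, $e^{i\theta}$ the rotation); since that lemma appears after this one in the text, your coordinate-free version has the mild advantage of not leaning on later material, while the explicit version buys a concrete normal form for the isometry carrying any spacelike plane to $\mathbf{l}=0$.
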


On the other hand, such isometries can be described explicitly when using the Hermitian matrix model of isotropic $3$-space, which we explain now.
The set of $2\times2$ Hermitian matrices denoted by $\hermtwo$ is given by
	\[
		\hermtwo := \left\{\begin{pmatrix} x_0 + x_3 & x_1 + i x_2 \\ x_1 - i x_2 & x_0 - x_3 \end{pmatrix} : x_0, x_1, x_2, x_3 \in \mathbb{R} \right\},
	\]
and we identify $\mathbb{L}^4 \cong \hermtwo$ via
	\[
		x = (x_0, x_1, x_2, x_3)^t \sim \begin{pmatrix} x_0 + x_3 & x_1 + i x_2 \\ x_1 - i x_2 & x_0 - x_3 \end{pmatrix} = X.
	\]
Under this identification, we have
	\[
		\langle x, x \rangle = -\det X.
	\]
Thus the action of $F \in \SLtwoC$ on $X \in \hermtwo$ via
	\[
		X \mapsto F X F^*,
	\]
amounts to an origin-fixing isometry of $\mathbb{L}^4$, where $F^*$ denotes the conjugate transpose of $F$.
Since $F$ and $-F$ are the same action, we have that $\SLtwoC$ is a double cover of $\SOthreeone$.

The Hermitian matrix model of isotropic $3$-space is then given by
	\[
		\mathbb{I}^3 := \left\{\begin{pmatrix} 2x_0 & x_1 + i x_2 \\ x_1 - i x_2 & 0 \end{pmatrix} : x_0, x_1, x_2 \in \mathbb{R} \right\}.
	\]
Noting that
	\[
		\mathfrak{p} = (1,0,0,1)^t \sim \begin{pmatrix} 2 & 0 \\ 0 & 0 \end{pmatrix} = \mathfrak{P},
	\]
we have that for $F = \begin{pmatrix} a & b \\ c & d \end{pmatrix} \in \SLtwoC$,
	\[
		F \mathfrak{P} F^*
%			= \begin{pmatrix} a & b \\ c & d \end{pmatrix} \begin{pmatrix} 2 & 0 \\ 0 & 0 \end{pmatrix} \begin{pmatrix} \bar{a} & \bar{c} \\ \bar{b} & \bar{d} \end{pmatrix}
%			= \begin{pmatrix} 2a & 0 \\ 2c & 0 \end{pmatrix} \begin{pmatrix} \bar{a} & \bar{c} \\ \bar{b} & \bar{d} \end{pmatrix}
			=  \begin{pmatrix} 2a\bar{a} & 2a\bar{c} \\ 2\bar{a}c & 2c\bar{c} \end{pmatrix}.
	\]
Thus, if $F$ is an isometry of isotropic $3$-space fixing the origin, that is, $F\mathfrak{P}F^* = \mathfrak{P}$, then we have $c = 0$ and $a = e^{i \theta}$ for some $\theta \in \mathbb{R}$.
Furthermore, since $F \in \SLtwoC$, we must have that
	\[
		1 = ad - bc = e^{i \theta} d
	\]
so that $d = e^{-i \theta}$.
Summarizing:
\begin{lemma}\label{lemm:isometry}
	The action of $F \in \SLtwoC$ on $X \in \hermtwo$ is an isometry of isotropic $3$-space fixing the origin if and only if $F \in \SUoneohone$ where
		\begin{equation}\label{eqn:lieGroup}
			\begin{aligned}
				 \SUoneohone &:= \left\{F \in \SLtwoC : F \begin{psmallmatrix} 1 & 0 \\ 0 & 0 \end{psmallmatrix} F^* = \begin{psmallmatrix} 1 & 0 \\ 0 & 0 \end{psmallmatrix} \right\} \\
					&= \left\{\begin{pmatrix} e^{i \theta} & \alpha + i \beta \\ 0 & e^{-i \theta} \end{pmatrix}: \alpha, \beta, \theta \in \mathbb{R}\right\}.
			\end{aligned}
		\end{equation}
\end{lemma}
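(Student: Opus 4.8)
The plan is to reduce the lemma to the explicit matrix computation performed just above its statement, after making precise the dictionary between the two descriptions of origin-fixing isometries. First I would recall, from the opening of this subsection, that an origin-fixing isometry of $\mathbb{I}^3$ is the same datum as an element $A \in \SOthreeone$ with $A\mathfrak{p} = \mathfrak{p}$: since $\mathbb{I}^3 = \langle \mathfrak{p} \rangle^\perp$, any linear isometry of $\mathbb{L}^4$ fixing $\mathfrak{p}$ restricts to a linear isometry of $\mathbb{I}^3$ fixing the origin, and conversely. Next, under the identification $\mathbb{L}^4 \cong \hermtwo$ with $\langle x, x \rangle = -\det X$, the action $X \mapsto FXF^*$ of $F \in \SLtwoC$ preserves determinants, hence defines an element of $\SOthreeone$, with $\SLtwoC \to \SOthreeone$ the double cover; and $\mathfrak{p}$ corresponds to $\mathfrak{P}$. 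Combining these, the transformation induced by $F$ is an origin-fixing isometry of $\mathbb{I}^3$ exactly when the associated element of $\SOthreeone$ fixes $\mathfrak{p}$, i.e.\ exactly when $F\mathfrak{P}F^* = \mathfrak{P}$, which after cancelling the common factor $2$ is precisely the defining condition of $\SUoneohone$ in \eqref{eqn:lieGroup}.

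It then remains to verify the second equality in \eqref{eqn:lieGroup}, that is, to solve $F \begin{psmallmatrix} 1 & 0 \\ 0 & 0 \end{psmallmatrix} F^* = \begin{psmallmatrix} 1 & 0 \\ 0 & 0 \end{psmallmatrix}$ for $F = \begin{psmallmatrix} a & b \\ c & d \end{psmallmatrix} \in \SLtwoC$. Expanding the left-hand side gives $\begin{psmallmatrix} a\bar{a} & a\bar{c} \\ \bar{a}c & c\bar{c} \end{psmallmatrix}$; comparing the lower-right entries forces $c = 0$, comparing the upper-left entries forces $|a| = 1$, say $a = e^{i\theta}$, and then $\det F = ad = 1$ forces $d = e^{-i\theta}$, while $b = \alpha + i\beta \in \mathbb{C}$ is arbitrary. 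Conversely, every matrix of this form manifestly satisfies the relation, yielding the stated parametrization. None of this involves any real difficulty beyond bookkeeping.

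The one step deserving care — and the only place a naive argument could go wrong — is the first paragraph: one must use that an isometry of $\mathbb{I}^3$ is required to fix $\mathfrak{p}$ \emph{exactly}, not merely to preserve the line $\langle \mathfrak{p} \rangle$ or the hyperplane $\mathbb{I}^3 = \mathfrak{p}^\perp$. Indeed, since the induced form on $\mathbb{I}^3$ is degenerate with radical $\langle \mathfrak{p} \rangle$, a transformation $F$ with $F\mathfrak{P}F^* = \lambda \mathfrak{P}$ for some $\lambda \ne 1$ would still restrict to a self-map of $\mathbb{I}^3$; such $F$ sweep out the full Borel subgroup, strictly larger than $\SUoneohone$. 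The convention fixing $\mathfrak{p}$ on the nose (reserving rescalings of $\mathfrak{p}$ for isotropic similarities) is the one adopted earlier in this subsection, and I would simply invoke it, after which the computation above closes the argument.
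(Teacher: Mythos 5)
Your proposal is correct and follows essentially the same route as the paper, which establishes the lemma by the computation $F\mathfrak{P}F^* = \begin{psmallmatrix} 2a\bar{a} & 2a\bar{c} \\ 2\bar{a}c & 2c\bar{c} \end{psmallmatrix}$ immediately preceding its statement, forcing $c=0$, $a=e^{i\theta}$, and $d=e^{-i\theta}$ via $\det F = 1$. Your added remarks — spelling out why fixing $\mathfrak{p}$ on the nose (rather than merely preserving $\langle\mathfrak{p}\rangle$ or $\mathfrak{p}^\perp$) is the relevant condition, and checking the easy converse — are consistent with the paper's stated convention that origin-fixing isometries are the $A\in\SOthreeone$ with $A\mathfrak{p}=\mathfrak{p}$.
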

With such explicit parametrization of $\SUoneohone$, we can also parametrize the Lie algebra $\suoneohone$:
	\begin{equation}\label{eqn:lieAlgebra}
		\begin{aligned}
			\suoneohone &:= \left\{ A \in \mathrm{GL}(2,\mathbb{C}): \trace{A} = 0, A \begin{psmallmatrix} 1 & 0 \\ 0 & 0 \end{psmallmatrix} + \begin{psmallmatrix} 1 & 0 \\ 0 & 0 \end{psmallmatrix}A^* = 0\right\} \\
				&= \left\{\begin{pmatrix} i a & z \\ 0 & -i a \end{pmatrix} : a \in \mathbb{R}, z \in \mathbb{C} \right\}.
		\end{aligned}
	\end{equation}

We state the following corollary which will be useful later on:
\begin{corollary}\label{cor:conj}
	If $F \in \SUoneohone$, then
		\[
			F^* \begin{pmatrix} 0 & 0 \\ 0 & 1 \end{pmatrix} F =  \begin{pmatrix} 0 & 0 \\ 0 & 1 \end{pmatrix}.
		\]
\end{corollary}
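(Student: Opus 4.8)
The plan is to reduce the claimed identity to a single explicit matrix multiplication by invoking the normal form of $\SUoneohone$ established in Lemma~\ref{lemm:isometry}. Write $F = \begin{psmallmatrix} e^{i\theta} & \alpha + i\beta \\ 0 & e^{-i\theta} \end{psmallmatrix}$ with $\alpha,\beta,\theta\in\mathbb{R}$, so that $F^* = \begin{psmallmatrix} e^{-i\theta} & 0 \\ \alpha - i\beta & e^{i\theta} \end{psmallmatrix}$. Multiplying $\begin{psmallmatrix}0&0\\0&1\end{psmallmatrix}$ on the right by $F$ annihilates the first column of $F$ and selects its second row, giving $\begin{psmallmatrix}0&0\\0&1\end{psmallmatrix}F = \begin{psmallmatrix}0&0\\0&e^{-i\theta}\end{psmallmatrix}$; left multiplication by $F^*$ then selects the second column of $F^*$, and the two would-be off-diagonal contributions vanish because the relevant entries are zero, while the surviving $(2,2)$-entry is $e^{i\theta}e^{-i\theta}=1$. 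Hence $F^*\begin{psmallmatrix}0&0\\0&1\end{psmallmatrix}F = F^*\begin{psmallmatrix}0&0\\0&e^{-i\theta}\end{psmallmatrix} = \begin{psmallmatrix}0&0\\0&1\end{psmallmatrix}$, as claimed.

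I expect no real obstacle here: all the content is already packaged into Lemma~\ref{lemm:isometry}, and what remains is a routine verification. If one prefers not to quote the explicit normal form, one can argue directly from the defining relation $F\begin{psmallmatrix}1&0\\0&0\end{psmallmatrix}F^*=\begin{psmallmatrix}1&0\\0&0\end{psmallmatrix}$: writing $F=\begin{psmallmatrix}a&b\\c&d\end{psmallmatrix}$, comparison of entries forces $|a|^2=1$ and $c=0$, after which $\det F = ad-bc = ad = 1$ gives $d=\bar a$; substituting $F=\begin{psmallmatrix}a&b\\0&\bar a\end{psmallmatrix}$ into $F^*\begin{psmallmatrix}0&0\\0&1\end{psmallmatrix}F$ produces $\begin{psmallmatrix}0&0\\0&|a|^2\end{psmallmatrix}=\begin{psmallmatrix}0&0\\0&1\end{psmallmatrix}$. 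Either route makes the statement immediate; the point of isolating it as a corollary is that the identity $F^*\begin{psmallmatrix}0&0\\0&1\end{psmallmatrix}F=\begin{psmallmatrix}0&0\\0&1\end{psmallmatrix}$ is the natural ``dual'' companion of the defining relation of $\SUoneohone$ (with $\begin{psmallmatrix}1&0\\0&0\end{psmallmatrix}$ replaced by $\begin{psmallmatrix}0&0\\0&1\end{psmallmatrix}$ and $F(\cdot)F^*$ replaced by $F^*(\cdot)F$), and it is exactly the form needed later when checking invariance of frame-dependent quantities under the structure group $\SUoneohone$.
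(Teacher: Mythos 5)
Your proof is correct and follows essentially the same route as the paper, which likewise verifies the identity by a direct computation using the explicit parametrization of $\SUoneohone$ from Lemma~\ref{lemm:isometry} (equation~\eqref{eqn:lieGroup}). The alternative argument from the defining relation is a nice bonus but not needed, since the paper's proof is exactly the routine matrix check you carried out.
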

\begin{proof}
	The result follows directly using calculation with explicit parametrization given in \eqref{eqn:lieGroup}.
\end{proof}
%Then, the isometries of isotropic $3$-space fixing the origin are given

\section{Spin transformations and spinor representation}\label{sect:three}
In this section, we consider the spin transformations of conformally immersed spacelike surfaces in isotropic $3$-space, analogously to those of Euclidean $3$-space defined in \cite[Definition~2.1]{kamberov_bonnet_1998}.
After recovering the structure equations using the Minkowski model of $\mathbb{L}^4$, we utilize the Hermitian matrix model to define spin transformations since the group of origin-fixing isometries play an important role in the definition.
Using spin transformations, we recover the spinor representation of conformal immersions in isotropic $3$-space, and obtain the Weierstrass-type representation for minimal surfaces and the Kenmotsu-type representation for (non-zero) constant mean curvature (cmc) surfaces as an application.

\subsection{Basic surface theory of spacelike surfaces}\label{subsect:threeone}
Suppose that $x : \Sigma \to \mathbb{I}^3$ is a spacelike immersion, i.e.\ the induced metric from the ambient space $\mathbb{L}^4$ is Riemannian.
Thus, we may assume that $(u,v) \in \Sigma$ are conformal coordinates, allowing us to introduce a complex structure via $z = u + i v$, and let
	\[
		\partial_z := \tfrac{1}{2}(\partial_u - i \partial_v), \quad \partial_{\bar{z}} := \tfrac{1}{2}(\partial_u + i \partial_v)
	\]
denote the Wirtinger derivatives.
Then the metric of the surface can be written as
	\[
		\dif{s}^2 = e^{2\sigma} (\dif{u}^2 + \dif{v}^2) = e^{2\sigma} \dif{z}\dif{\bar{z}} = 2 \langle x_z, x_{\bar{z}} \rangle \dif{z}\dif{\bar{z}}
	\]
for some function $\sigma: \Sigma \to \mathbb{R}$.

The key difference from the Euclidean case arises in the consideration of the second fundamental form: the Gauss map $\nu$ of the surface in isotropic $3$-space is defined via contact, not metric, and thus, $\langle \dif{x}, \nu \rangle \neq 0$ in general.
To overcome this issue, we take advantage of viewing isotropic $3$-space within Minkowski $4$-space: we treat spacelike surfaces in isotropic $3$-space as spacelike surfaces (of codimension two) in Minkowski $4$-space with a flat normal bundle.
Every fiber of the normal bundle is spanned by a constant vector $\mathfrak{p}$ and the lightlike Gauss map $g : \Sigma \to \mathcal{L}$ of $x$, i.e.\ $\langle g, \mathfrak{p} \rangle = 1$, since Lemma~\ref{lemm:lightlikeGauss} tells us that the fact that $x$ is an immersion implies $\langle \dif{x}, g \rangle = 0$.
In other words, $\mathfrak{p}$ and $g$ constitute a null basis of the fibers of the normal bundle.

Furthermore, since $\mathfrak{p}$ is a constant section of the normal bundle, we only need to consider the lightlike Gauss map $g$ for the second fundamental form; thus, we define the coefficients of the second fundamental form as
	\[
		L := \langle x_{uu}, g \rangle, \quad M := \langle x_{uv}, g \rangle, \quad N := \langle x_{vv}, g \rangle.
	\]

\begin{remark}
	Since $g = \nu + \tilde{\mathfrak{p}}$ for the Gauss map $\nu$, we have
		\[
			\dif{g} = \dif{\nu},
		\]
	so that the coefficients of the second fundamental form can also be calculated from the Gauss map $\nu$ via
	\[
		L = -\langle x_{u}, \nu_u \rangle, \quad M = -\langle x_u, \nu_v \rangle = - \langle x_v, \nu_u \rangle, \quad N = - \langle x_v, \nu_v \rangle.
	\]
\end{remark}

The shape operator $S$ satisfies
	\[
		S = e^{-2\sigma} \begin{pmatrix} L & M \\ M & N \end{pmatrix},
	\]
so that the mean curvature $H$ is
	\begin{equation}\label{eqn:meanC}
		H := \frac{1}{2}\trace S = \frac{1}{2 e^{2\sigma}} (L+N) = 2e^{-2\sigma}\langle x_{z\bar{z}}, g \rangle,
	\end{equation}
while the Hopf differential can be defined by
	\[
		Q \dif{z}^2 := \frac{1}{4}(L - N - 2i M) \dif{z}^2 = \langle x_{zz}, g\rangle \dif{z}^2.
	\]

Now we use $\{x_z, x_{\bar{z}}, \mathfrak{p}, g\}$ as a basis to calculate the Gauss-Weingarten equations:
	\begin{equation}\label{eqn:GW}
		\begin{cases}
			x_{zz} = 2\sigma_z x_z + Q \mathfrak{p} \\
			x_{z\bar{z}} = \frac{1}{2}e^{2\sigma}H \mathfrak{p} \\
			g_z = -H x_z - 2 e^{-2\sigma} Q x_{\bar{z}}.
		\end{cases}
	\end{equation}
Note that the coefficient of $g$ always vanishes; therefore, we can treat the Gauss-Weingarten equations \eqref{eqn:GW} purely within the scope of isotropic $3$-space \cite[\S~72]{strubecker_differentialgeometrie_1942}.
%	\[\begin{cases}
%		x_{uu} = \sigma_u x_u - \sigma_v x_v + L \mathfrak{p} \\
%		x_{uv} = \sigma_v x_u + \sigma_u x_v + M\mathfrak{p} \\
%		x_{vv} = -\sigma_u x_u + \sigma_v x_v + N \mathfrak{p} \\
%		g_u = -e^{-2\sigma} L x_u -e^{-2\sigma} M x_v \\
%		g_v = -e^{-2\sigma} M x_u -e^{-2\sigma} N x_v.
%	\end{cases}\]
%In particular, we see from the Weingarten equations that the shape operator $S$ is
%	\[
%		\begin{pmatrix} g_u \\ g_v \end{pmatrix} = -e^{-2\sigma} \begin{pmatrix} L & M \\ M & N \end{pmatrix}\begin{pmatrix} x_u \\ x_v \end{pmatrix} =: -S \begin{pmatrix} x_u \\ x_v \end{pmatrix}.
%	\]
%Thus, the mean curvature $H$ is
%	\[
%		H := \frac{1}{2}\trace S = \frac{1}{2 e^{2\sigma}} (L+N) = 2e^{-2\sigma}\langle x_{z\bar{z}}, g \rangle,
%	\]
%while the Hopf differential is defined by
%	\[
%		Q \dif{z}^2 := \frac{1}{4}(L - N - 2i M) \dif{z}^2 = \langle x_{zz}, g\rangle \dif{z}^2.
%	\]
The Gauss equation and the Codazzi equation follow:
	\[
		\sigma_{z\bar{z}} = 0, \quad H_z = 2 e^{-2\sigma} Q_{\bar{z}}.
	\]
	
\begin{remark}
	Noting that the (extrinsic) Gaussian curvature is given by
		\[
			K := \det S = H^2 - 4e^{-4\sigma} Q\bar{Q},
		\]
	we see that the Gauss equation implies that the Gaussian curvature is extrinsic.
	This is a fact noted by Strubecker in \cite[\S~27]{strubecker_differentialgeometrie_1942-1}, where $K$ is called the \emph{relative curvature}.
\end{remark}

\begin{remark}
	The calculation of the structure equations for surfaces in Euclidean space can be carried out similarly: Namely, viewing the surface in Minkowski $4$-space (having codimension two), one can note that the fibers of the normal bundle are spanned by the Euclidean Gauss map $\nu$ and the timelike point sphere complex $\mathfrak{p}$.
	Since $\mathfrak{p}$ is again a constant section of the normal bundle, the second fundamental form is calculated using $\nu$.
	The Gauss-Weingarten equations can also be calculated using the basis $\{x_z, x_{\bar{z}}, \nu, \mathfrak{p}\}$; however, in the Euclidean case, the coefficient of $\mathfrak{p}$ vanishes, resulting in the usual Gauss-Weingarten equations for surfaces in Euclidean space expressed in terms of $\{x_z, x_{\bar{z}}, \nu\}$.
\end{remark}

For spacelike surfaces in isotropic $3$-space, the coordinate function for the vertical direction is directly related to the mean curvature:
\begin{lemma}
	Suppose that a spacelike surface $x: \Sigma \to \mathbb{I}^3$ is parametrized via
		\[
			x(u,v) = (\mathbf{l}(u,v), \mathbf{x}(u,v), \mathbf{y}(u,v), \mathbf{l}(u,v))^t.
		\]
	Then we have
		\[
			\mathbf{l}_{z \bar{z}} = \frac{1}{2}e^{2\sigma}H.
		\]
\end{lemma}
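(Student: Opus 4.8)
The plan is to read off the vertical coordinate $\mathbf{l}$ from the identification $\mathbb{I}^3 = \{(\mathbf{l},\mathbf{x},\mathbf{y},\mathbf{l})^t\}$ and to express $\mathbf{l}$ as a suitable contraction of $x$ against the fixed null basis vectors $\mathfrak{p}$ and $\tilde{\mathfrak{p}}$. Concretely, using the normalizations $\mathfrak{p}=(1,0,0,1)^t$ and $\tilde{\mathfrak{p}}=\tfrac12(-1,0,0,1)^t$, a direct computation with the Minkowski inner product gives $\langle x,\tilde{\mathfrak{p}}\rangle$ in terms of the coordinates of $x=(\mathbf{l},\mathbf{x},\mathbf{y},\mathbf{l})^t$; the $\mathbf{x}$- and $\mathbf{y}$-components drop out and one is left with a constant multiple of $\mathbf{l}$. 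So the first step is to identify the constant $c$ for which $\mathbf{l} = c\,\langle x,\tilde{\mathfrak{p}}\rangle$ (I expect $c=1$, i.e. $\mathbf{l}=\langle x,\tilde{\mathfrak{p}}\rangle$, after checking signs).

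Once $\mathbf{l}$ is written this way, differentiating twice is immediate because $\tilde{\mathfrak{p}}$ is a constant vector: $\mathbf{l}_{z\bar z} = \langle x_{z\bar z},\tilde{\mathfrak{p}}\rangle$. Now I invoke the Gauss--Weingarten equations \eqref{eqn:GW}, which give $x_{z\bar z} = \tfrac12 e^{2\sigma}H\,\mathfrak{p}$. Hence $\mathbf{l}_{z\bar z} = \tfrac12 e^{2\sigma}H\,\langle \mathfrak{p},\tilde{\mathfrak{p}}\rangle = \tfrac12 e^{2\sigma}H$, using the defining normalization $\langle\mathfrak{p},\tilde{\mathfrak{p}}\rangle = 1$. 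This is exactly the claimed identity.

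The only point requiring a little care — and the step I expect to be the main (minor) obstacle — is pinning down the scalar relating $\mathbf{l}$ to $\langle x,\tilde{\mathfrak{p}}\rangle$ with the correct sign, since the Minkowski metric has signature $(-,+,+,+)$ and $\tilde{\mathfrak{p}}$ carries a factor of $\tfrac12$ with mixed signs in its components. Alternatively, one can sidestep the coordinate computation entirely: note that $\pi_I x = x$ since $x\in\mathbb{I}^3$, and observe that the map $y\mapsto \langle y,\tilde{\mathfrak{p}}\rangle$ restricted to $\mathbb{I}^3$ recovers precisely the vertical coordinate $\mathbf{l}$ by construction of the coordinate chart $\psi$; then the same two-line differentiation with \eqref{eqn:GW} finishes. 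Either way the argument is short, and the substance is entirely contained in the middle equation of the Gauss--Weingarten system together with $\langle\mathfrak{p},\tilde{\mathfrak{p}}\rangle=1$.
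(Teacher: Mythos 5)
Your proposal is correct and follows essentially the same route as the paper: both start from the observation that $\mathbf{l} = \langle x, \tilde{\mathfrak{p}}\rangle$ (with the normalization $\langle \mathfrak{p},\tilde{\mathfrak{p}}\rangle = 1$) and then evaluate $\mathbf{l}_{z\bar z} = \langle x_{z\bar z},\tilde{\mathfrak{p}}\rangle$, the paper via $\tfrac14(L+N)$ and the definition of $H$, you via the equivalent middle Gauss--Weingarten equation $x_{z\bar z} = \tfrac12 e^{2\sigma}H\,\mathfrak{p}$. Your sign check also comes out as expected ($c=1$), so nothing is missing.
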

\begin{proof}
	Note that since $\langle x, \tilde{\mathfrak{p}} \rangle = \mathbf{l}$, we have that $\mathbf{l}_{z \bar{z}} = \langle x_{z\bar{z}} , \tilde{\mathfrak{p}} \rangle$.
	Thus,
		\[
			\mathbf{l}_{z \bar{z}}= \frac{1}{4}\langle x_{uu} + x_{vv}, \tilde{\mathfrak{p}} \rangle = \frac{1}{4}(L+N),
		\]
	giving us the desired conclusion.
\end{proof}

Isometries fixing the origin are central to the definition of spin transformations \cite[Definition~2.1]{kamberov_bonnet_1998}; thus, we will use the Hermitian matrix model of $\mathbb{I}^3$ as the switch allows us to write the isometries explicitly as in Lemma~\ref{lemm:isometry}.
For this, we first rewrite the Gauss-Weingarten equations \eqref{eqn:GW} within the context of Hermitian matrix model.

Choosing the basis of $\mathbb{L}^4$ as $\{ e_1, e_2, \mathfrak{p}, \tilde{\mathfrak{p}}\}$ for
	\[
		e_1 := (0, 1 ,0, 0)^t, \quad e_2 := (0, 0, 1, 0)^t,
	\]
we see that they correspond to
	\[
		E_1 = \begin{pmatrix} 0 & 1 \\ 1 & 0 \end{pmatrix},
			\quad E_2 = \begin{pmatrix} 0 & i \\ -i & 0 \end{pmatrix},
			\quad \mathfrak{P} = \begin{pmatrix} 2 & 0 \\ 0 & 0 \end{pmatrix},
			\quad \tilde{\mathfrak{P}} = \begin{pmatrix} 0 & 0 \\ 0 & -1 \end{pmatrix},
	\]
in the Hermitian matrix model, respectively.

Now, let $F : \Sigma \to \SUoneohone$ such that
	\begin{equation}\label{eqn:frame}
		X_u = e^\sigma F E_1 F^* \quad\text{and}\quad X_v = e^\sigma F E_2 F^*.
	\end{equation}
Since $F \subset \SUoneohone$ implies that
	\[
		\mathfrak{P} = F \mathfrak{P} F^*,
	\]
we then have
	\[
		G = F \tilde{\mathfrak{P}} F^*,
	\]
where $G$ is the lightlike Gauss map in the Hermitian matrix model.

Thus, using $X_z = \frac{1}{2}(X_u - i X_v) = e^\sigma F \frac{E_1 - i E_2}{2} F^*$, we may rewrite the structure equations \eqref{eqn:GW} as
	\begin{equation}\label{eqn:GW2}
		\begin{cases}
			X_{zz} = F \left(2 \sigma_z e^\sigma \frac{E_1 - i E_2}{2} + Q \mathfrak{P}\right) F^* = F \begin{psmallmatrix} 2Q & 2\sigma_z e^\sigma \\ 0 & 0 \end{psmallmatrix} F^* \\
			X_{z\bar{z}} = F\left(\frac{1}{2}e^{2\sigma} H \mathfrak{P}\right) F^* = F \begin{psmallmatrix} e^{2\sigma} H & 0 \\ 0 & 0 \end{psmallmatrix} F^* \\
			G_z = F \left(- e^\sigma H \frac{E_1 - i E_2}{2} - 2 e^{-\sigma} Q \frac{E_1 + i E_2}{2}\right) F^* = F \begin{psmallmatrix} 0 & -e^\sigma H \\ -2 e^{-\sigma} Q & 0 \end{psmallmatrix} F^*.
		\end{cases}
	\end{equation}
On the other hand, differentiating $X_z$ and $G$, we also obtain that
	\begin{equation}\label{eqn:GW3}
		\begin{cases}
			X_{zz} = e^\sigma F \left(\sigma_z  \frac{E_1 - i E_2}{2} + F^{-1}F_z  \frac{E_1 - i E_2}{2} +  \frac{E_1 - i E_2}{2} (F^{-1}F_{\bar{z}})^* \right) F^* \\
			X_{z\bar{z}} = e^\sigma F \left(\sigma_{\bar{z}}  \frac{E_1 - i E_2}{2} + F^{-1}F_{\bar{z}}  \frac{E_1 - i E_2}{2} +  \frac{E_1 - i E_2}{2} (F^{-1}F_z)^* \right) F^* \\
			G_z = F(F^{-1} F_z \tilde{\mathfrak{P}} + \tilde{\mathfrak{P}} (F^{-1} F_{\bar{z}})^*)F^*.
		\end{cases}
	\end{equation}
Since $F^{-1}\dif{F}$ is an $\suoneohone$-valued $1$--form, we use \eqref{eqn:lieAlgebra} to write
	\[
		F^{-1} F_z =: \begin{pmatrix} A & B \\ 0 & -A \end{pmatrix}, \quad F^{-1} F_{\bar{z}} =: \begin{pmatrix} a & b \\ 0 & -a \end{pmatrix}
	\]
for some $A, B, a, b : \Sigma \to \mathbb{C}$.
Now comparing the different expressions for $X_{zz}, X_{z\bar{z}}, G_z$ in \eqref{eqn:GW2} and \eqref{eqn:GW3}, we obtain the relations
	\begin{gather*}
		\begin{pmatrix} 2e^{-\sigma}Q & 2\sigma_z \\ 0 & 0 \end{pmatrix} = \begin{pmatrix} \bar{b} & \sigma_z + A - \bar{a} \\ 0 & 0 \end{pmatrix},
			\quad \begin{pmatrix} e^{\sigma} H & 0 \\ 0 & 0 \end{pmatrix} = \begin{pmatrix} \bar{B} & \sigma_{\bar{z}} + a - \bar{A} \\ 0 & 0 \end{pmatrix} \\
			\begin{pmatrix} 0 & - e^\sigma H \\ -2 e^{-\sigma} Q & 0 \end{pmatrix} = \begin{pmatrix} 0 & -B \\ -\bar{b} & A + \bar{a} \end{pmatrix},
	\end{gather*}
allowing us to solve for $A, B, a, b$, and conclude that
	\begin{equation}\label{eqn:GWHerm}
		F^{-1} F_z =: \begin{pmatrix} \frac{1}{2}\sigma_z & e^\sigma H \\ 0 & -\frac{1}{2}\sigma_z \end{pmatrix}, \quad F^{-1} F_{\bar{z}} =: \begin{pmatrix} -\frac{1}{2}\sigma_{\bar{z}} & 2 e^{-\sigma} \bar{Q} \\ 0 & \frac{1}{2}\sigma_{\bar{z}} \end{pmatrix}.
	\end{equation}

\subsection{Spin transformation and Dirac--type operator}\label{subsect:threetwo}
Spin transformations relate two conformally equivalent surfaces via homotheties and rotations of the corresponding tangent planes.
Since rotations fixing the origin are given by $\SUoneohone$, we let
	\[
		\mathcal{G} := \mathbb{R}^+ \otimes \SUoneohone
	\]
where $\mathbb{R}^+$ denotes the set of positive real numbers.
We define spin transformations of a conformal immersion as follows:
\begin{definition}[{cf.\ \cite[Definition~2.1]{kamberov_bonnet_1998}}]\label{def:spin}
	Let $X, \tilde{X}: \Sigma \to \mathbb{I}^3$ be conformal immersions.
	Then $\tilde{X}$ is called a \emph{spin transformation} of $X$ if there exists some $B: \Sigma \to \mathcal{G}$ such that
		\begin{equation}\label{eqn:spin}
			\dif{\tilde{X}} = B \dif{X} B^*.
		\end{equation}
\end{definition}

Suppose that we wish to find spin transformations $\tilde{X}$ of $X$ using \eqref{eqn:spin}.
For such $\tilde{X}$ to exist, we need $\dif{(\dif{\tilde{X}})} = 0$, or equivalently using coordinates,
	\[
		\tilde{X}_{z\bar{z}} = \tilde{X}_{\bar{z}z}.
	\]
To see when $\dif{\tilde{X}}$ is closed, note that for $\hat{F} := B F$, \eqref{eqn:spin} implies
	\[
		\tilde{X}_z = B X_z B^* = e^{\sigma} \hat{F} \tfrac{E_1 - i E_2}{2} \hat{F}^*, \quad \tilde{X}_{\bar{z}} = B X_{\bar{z}} B^* = e^{\sigma} \hat{F} \tfrac{E_1 + i E_2}{2} \hat{F}^*,
	\]
allowing us to calculate
	\begin{equation}\label{eqn:newComp}
		\begin{aligned}
			\tilde{X}_{z\bar{z}} &= e^\sigma \hat{F} \left(\sigma_{\bar{z}}  \tfrac{E_1 - i E_2}{2} + \hat{F}^{-1}\hat{F}_{\bar{z}}  \tfrac{E_1 - i E_2}{2} +  \tfrac{E_1 - i E_2}{2} (\hat{F}^{-1}\hat{F}_z)^* \right) \hat{F}^* \\
			\tilde{X}_{\bar{z}z} &= e^\sigma \hat{F} \left(\sigma_z  \tfrac{E_1 + i E_2}{2} + \hat{F}^{-1}\hat{F}_z  \tfrac{E_1 + i E_2}{2} +  \tfrac{E_1 + i E_2}{2} (\hat{F}^{-1}\hat{F}_{\bar{z}})^* \right) \hat{F}^*.
		\end{aligned}
	\end{equation}
Defining
	\[
		\Omega := \begin{pmatrix} \Omega_{11} & \Omega_{12} \\ \Omega_{21} & \Omega_{22} \end{pmatrix} := F^{-1} B^{-1} B_z F,
			\quad \Lambda := \begin{pmatrix} \Lambda_{11} & \Lambda_{12} \\ \Lambda_{21} & \Lambda_{22} \end{pmatrix} := F^{-1} B^{-1} B_{\bar{z}} F,
	\]
we can verify that
	\[
		\hat{F}^{-1}\hat{F}_z = \Omega + F^{-1}F_z, \quad \hat{F}^{-1}\hat{F}_{\bar{z}} = \Lambda + F^{-1}F_{\bar{z}},
	\]
so that \eqref{eqn:newComp} implies that $\dif{\tilde{X}}$ is closed if and only if
	\[
		\begin{pmatrix} \overline{\Omega_{12}} + e^{\sigma} H & \Lambda_{11} + \overline{\Omega_{22}} \\ 0 & \Lambda_{21} \end{pmatrix}
			= \begin{pmatrix} \Omega_{12} + e^{\sigma} H & 0 \\ \overline{\Lambda_{11}} + \Omega_{22} & \overline{\Lambda_{21}} \end{pmatrix},
	\]
or equivalently,
	\begin{equation}\label{eqn:compatibility}
		\Omega_{12} \in \mathbb{R}, \quad \Lambda_{21} \in \mathbb{R}, \quad \overline{\Lambda_{11}} + \Omega_{22} = 0.
	\end{equation}

Now suppose that $B: \Sigma \to \mathcal{G}$ such that the compatibility condition \eqref{eqn:compatibility} is satisfied.
To see how the metric changes, we note that
	\begin{equation}\label{eqn:metric}
		e^{2\tilde{\sigma}}\dif{z}\dif{\bar{z}} =: \langle \dif{\tilde{x}}, \dif{\tilde{x}} \rangle = -\det{\dif{\tilde{X}}} = -\det{(B \dif{X} B^*)} = (\det{B})^2 e^{2\sigma}\dif{z}\dif{\bar{z}}.
	\end{equation}
	
Now, for some $\tilde{F}: \Sigma \to \SUoneohone$, we can write 
	\[
		\tilde{X}_z = e^{\tilde{\sigma}} \tilde{F} \tfrac{E_1 - i E_2}{2} \tilde{F}^*,
	\]
so that $\tilde{X}$ satisfies the analogous Gauss-Weingarten equations \eqref{eqn:GWHerm}:
	\begin{equation}\label{eqn:newGW}
		\tilde{F}^{-1} \tilde{F}_z =: \begin{pmatrix} \frac{1}{2}\tilde{\sigma}_z & e^{\tilde{\sigma}} \tilde{H} \\ 0 & -\frac{1}{2}\tilde{\sigma}_z \end{pmatrix}, \quad \tilde{F}^{-1} \tilde{F}_{\bar{z}} =: \begin{pmatrix} -\frac{1}{2}\tilde{\sigma}_{\bar{z}} & 2 e^{-\tilde{\sigma}} \bar{\tilde{Q}} \\ 0 & \frac{1}{2}\tilde{\sigma}_{\bar{z}} \end{pmatrix}
	\end{equation}
where $\tilde{H}$ and $\tilde{Q}$ are the mean curvature and Hopf differential factor of $\tilde{X}$, respectively.
On the other hand, if we let $A := fB : \Sigma \to \SUoneohone$ for some $f: \Sigma \to \mathbb{R}^+$, then we have
	\[
		\tilde{F} = A F = fBF.
	\]
Thus we also have
	\begin{equation}\label{eqn:newGW2}
		\begin{aligned}
			\tilde{F}^{-1} \tilde{F}_z &= F^{-1} B^{-1} B_z F + f^{-1}f_z I + F^{-1}F_z, \\
			\tilde{F}^{-1} \tilde{F}_{\bar{z}} &= F^{-1} B^{-1} B_{\bar{z}} F + f^{-1}f_{\bar{z}} I + F^{-1}F_{\bar{z}}.
		\end{aligned}
	\end{equation}
Noting that since $\det{A} = 1$, the change in metric \eqref{eqn:metric} implies
	\[
		f^2 = \frac{1}{\det{B}} = e^{\sigma - \tilde{\sigma}},
	\]
so that
	\[
		f^{-1} f_z = \frac{(f^2)_z}{2f^2} = \tfrac{1}{2}(\sigma_z - \tilde{\sigma}_z).
	\]
Therefore, comparing the two expressions for the logarithmic derivative of $\tilde{F}$ \eqref{eqn:newGW} and \eqref{eqn:newGW2}, we obtain
	\begin{align*}
		\begin{pmatrix} \frac{1}{2}\tilde{\sigma}_z & e^{\tilde{\sigma}} \tilde{H} \\ 0 & -\frac{1}{2}\tilde{\sigma}_z \end{pmatrix}
			&= \begin{pmatrix} \Omega_{11} + \sigma_z - \frac{1}{2}\tilde{\sigma}_z & \Omega_{12} + e^\sigma H \\ \Omega_{21} & \Omega_{22}- \frac{1}{2} \tilde{\sigma}_z \end{pmatrix} \\
		\begin{pmatrix} -\frac{1}{2}\tilde{\sigma}_{\bar{z}} & 2 e^{-\tilde{\sigma}} \bar{\tilde{Q}} \\ 0 & \frac{1}{2}\tilde{\sigma}_{\bar{z}} \end{pmatrix}
			&=  \begin{pmatrix} \Lambda_{11} - \frac{1}{2}\tilde{\sigma}_{\bar{z}} & \Lambda_{12} + 2 e^{-\sigma} \bar{Q} \\ \Lambda_{21}  & \Lambda_{22} + \sigma_{\bar{z}} - \frac{1}{2} \tilde{\sigma}_{\bar{z}}\end{pmatrix},
	\end{align*}
so that
	\begin{equation}\label{eqn:compatibility2}
		\Omega_{21} = \Omega_{22} = \Lambda_{11} = \Lambda_{21} = 0, \quad \Omega_{11} = \overline{\Lambda_{22}},
	\end{equation}
while
	\[
		\tilde{H} = \frac{\Omega_{12} + e^{\sigma} H}{ e^\sigma \det{B}} = \frac{H + \rho}{\det{B}}
	\]
for
	\begin{equation}\label{eqn:compatibility3}
		\rho = e^{-\sigma} \Omega_{12}.
	\end{equation}
Finally, we also note the change in the Hopf differential:
	\[
		\tilde{Q} = \frac{1}{2}\det B (e^{\sigma}\overline{\Lambda_{12}} + 2 Q).
	\]

The compatibility condition on $\Omega$ and $\Lambda$  \eqref{eqn:compatibility}, and the subsequent equations \eqref{eqn:compatibility2}, \eqref{eqn:compatibility3} can be expressed in terms of a single Dirac--type equation for isotropic $3$--space:
\begin{theorem}[{cf.\ \cite[Lemmata~2.3, 2.4]{kamberov_bonnet_1998}}]\label{thm:Dirac}
	Let $X: \Sigma \to \mathbb{I}^3$ be a conformal immersion with mean curvature $H$.
	A spin transformation $\tilde{X}: \Sigma \to \mathbb{I}^3$ of $X$ is well-defined if and only if $B : \Sigma \to \mathcal{G} = \mathbb{R}^+ \otimes \SUoneohone$ satisfies the Dirac--type equation for isotropic $3$--space, that is, for some $\rho : \Sigma \to \mathbb{R}$,
		\begin{equation}\label{eqn:Dirac}
			B^{-1} \dif{B} \wedge \dif{X} = -\rho \dif{X} \wedge \tilde{\mathfrak{P}} \dif{X},
		\end{equation}
	where $\tilde{\mathfrak{P}} = \begin{psmallmatrix} 0 & 0 \\ 0 & -1 \end{psmallmatrix}$.
	Furthermore, the mean curvature $\tilde{H}$ of $\tilde{X}$ can be obtained via
		\begin{equation}\label{eqn:newMean}
			\tilde{H} = \frac{H + \rho}{\det{B}}.
		\end{equation}
\end{theorem}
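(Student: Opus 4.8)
The statement is, at heart, a repackaging of the compatibility relations \eqref{eqn:compatibility}, \eqref{eqn:compatibility2} and \eqref{eqn:compatibility3} already isolated in the discussion preceding the theorem into the single matrix-valued $2$-form equation \eqref{eqn:Dirac}. So the plan is to expand \eqref{eqn:Dirac} in the conformal coordinate $z$, read off precisely which of the entries $\Omega_{ij}$, $\Lambda_{ij}$ it controls, and then check that this matches the condition ``$\dif{\tilde{X}}$ is closed'' on the nose; the mean curvature formula \eqref{eqn:newMean} is then nothing but the one already recorded.

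First I would expand both sides of \eqref{eqn:Dirac}. Keeping the abbreviations $\Omega = F^{-1}B^{-1}B_z F$ and $\Lambda = F^{-1}B^{-1}B_{\bar z}F$, so that $B^{-1}B_z = F\Omega F^{-1}$ and $B^{-1}B_{\bar z} = F\Lambda F^{-1}$, and writing $E_+ := \tfrac{E_1 - iE_2}{2} = \begin{psmallmatrix} 0 & 1 \\ 0 & 0 \end{psmallmatrix}$, $E_- := \tfrac{E_1 + iE_2}{2} = \begin{psmallmatrix} 0 & 0 \\ 1 & 0 \end{psmallmatrix}$ so that $X_z = e^{\sigma}F E_+ F^*$ and $X_{\bar z} = e^{\sigma}F E_- F^*$, the left-hand side of \eqref{eqn:Dirac} works out to $e^{\sigma}F(\Omega E_- - \Lambda E_+)F^*\,\dif{z}\wedge\dif{\bar{z}}$. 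For the right-hand side, the essential input is Corollary~\ref{cor:conj}: since $\tilde{\mathfrak{P}} = -\begin{psmallmatrix} 0 & 0 \\ 0 & 1 \end{psmallmatrix}$, it gives $F^*\tilde{\mathfrak{P}}F = \tilde{\mathfrak{P}}$ for $F:\Sigma\to\SUoneohone$, so that $-\rho\,\dif{X}\wedge\tilde{\mathfrak{P}}\dif{X} = -\rho e^{2\sigma}F(E_+\tilde{\mathfrak{P}}E_- - E_-\tilde{\mathfrak{P}}E_+)F^*\,\dif{z}\wedge\dif{\bar{z}}$, and the elementary $2\times 2$ computation $E_+\tilde{\mathfrak{P}}E_- - E_-\tilde{\mathfrak{P}}E_+ = \begin{psmallmatrix} -1 & 0 \\ 0 & 0 \end{psmallmatrix}$ turns this into $\rho e^{2\sigma}F\begin{psmallmatrix} 1 & 0 \\ 0 & 0 \end{psmallmatrix}F^*\,\dif{z}\wedge\dif{\bar{z}}$. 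Cancelling the invertible conjugation by $F$ and the common factor $e^{\sigma}$, equation \eqref{eqn:Dirac} is therefore equivalent to
\[
	\Omega E_- - \Lambda E_+ = \begin{pmatrix} \Omega_{12} & -\Lambda_{11} \\ \Omega_{22} & -\Lambda_{21} \end{pmatrix} = \rho e^{\sigma} \begin{pmatrix} 1 & 0 \\ 0 & 0 \end{pmatrix},
\]
i.e.\ to $\Omega_{12} = e^{\sigma}\rho$ together with $\Omega_{22} = \Lambda_{11} = \Lambda_{21} = 0$; and since $\rho$ is real, $\Omega_{12} = e^{\sigma}\rho$ for some real $\rho$ is exactly ``$\Omega_{12}\in\mathbb{R}$, with $\rho$ given by \eqref{eqn:compatibility3}''.

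It then remains to run the two implications. If $\tilde{X}$ is a well-defined conformal immersion with $\dif{\tilde{X}} = B\dif{X}B^*$, then $\dif{\tilde{X}}$ is closed, which by the computation \eqref{eqn:newComp} amounts precisely to \eqref{eqn:compatibility}; and because $\tilde{X}$ exists we may frame it by some $\tilde{F}:\Sigma\to\SUoneohone$ with $\tilde{X}_z = e^{\tilde{\sigma}}\tilde{F}E_+\tilde{F}^*$, whence comparing the two expressions \eqref{eqn:newGW} and \eqref{eqn:newGW2} for $\tilde{F}^{-1}\tilde{F}_z$ and $\tilde{F}^{-1}\tilde{F}_{\bar z}$ forces \eqref{eqn:compatibility2} and \eqref{eqn:compatibility3}, in particular $\Omega_{12}\in\mathbb{R}$ and $\Omega_{22} = \Lambda_{11} = \Lambda_{21} = 0$; so \eqref{eqn:Dirac} holds with $\rho = e^{-\sigma}\Omega_{12}$, and \eqref{eqn:newMean} is the mean-curvature formula already obtained there. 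Conversely, if $B:\Sigma\to\mathcal{G}$ satisfies \eqref{eqn:Dirac} for some real $\rho$, the expansion above yields $\Omega_{12}\in\mathbb{R}$ and $\Omega_{22} = \Lambda_{11} = \Lambda_{21} = 0$; combined with $\Omega_{21} = \Lambda_{21} = 0$, which holds for every $B:\Sigma\to\mathcal{G}$ since, writing $B = \mu B_0$ with $\mu:\Sigma\to\mathbb{R}^+$ and $B_0:\Sigma\to\SUoneohone$, both $B^{-1}\dif{B}$ and $F$ are upper triangular, this is exactly \eqref{eqn:compatibility}, so $\dif{\tilde{X}}$ is closed and, $\Sigma$ being simply connected, $\tilde{X}$ is a well-defined immersion; it is automatically conformal and (up to a translation) $\mathbb{I}^3$-valued because $\langle\dif{\tilde{X}},\dif{\tilde{X}}\rangle = (\det B)^2\langle\dif{X},\dif{X}\rangle$ and $B_0$ acts as an isometry fixing $\mathfrak{p}$. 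The formula \eqref{eqn:newMean} then follows once more from \eqref{eqn:newGW} and \eqref{eqn:newGW2}.

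The one point demanding care — and the step I expect to be the only real, if modest, obstacle — is that \eqref{eqn:Dirac} visibly constrains only $\Omega_{12}$, $\Omega_{22}$, $\Lambda_{11}$, $\Lambda_{21}$, and not all the entries named in \eqref{eqn:compatibility2} (it says nothing directly about $\Omega_{21}$ or about the relation $\Omega_{11} = \overline{\Lambda_{22}}$). This is not a gap: $\Omega_{21} = \Lambda_{21} = 0$ is automatic by the triangularity remark above, while $\Omega_{11} = \overline{\Lambda_{22}}$ is a consequence of $\Omega_{22} = \Lambda_{11} = 0$ once one notes that $\Omega_{11} + \Omega_{22} = 2\mu^{-1}\mu_z = \overline{2\mu^{-1}\mu_{\bar z}} = \overline{\Lambda_{11} + \Lambda_{22}}$, using that $\mu$ is real-valued. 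Everything else is the bookkeeping already carried out in Section~\ref{subsect:threetwo}.
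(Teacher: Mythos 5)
Your proof is correct and takes essentially the same route as the paper's: expand both sides of \eqref{eqn:Dirac} in the conformal coordinate using the frame $F$ and Corollary~\ref{cor:conj}, reduce \eqref{eqn:Dirac} to the matrix identity $\begin{psmallmatrix} \Omega_{12} & -\Lambda_{11} \\ \Omega_{22} & -\Lambda_{21} \end{psmallmatrix} = \rho e^{\sigma}\begin{psmallmatrix} 1 & 0 \\ 0 & 0 \end{psmallmatrix}$, and then run both implications through the compatibility analysis \eqref{eqn:compatibility}--\eqref{eqn:compatibility3} already established before the theorem. Your extra structural observations (automatic vanishing of $\Omega_{21}$, $\Lambda_{21}$ by upper triangularity and the trace identity giving $\Omega_{11} = \overline{\Lambda_{22}}$) are correct but not required for the equivalence with \eqref{eqn:compatibility}, which is all the theorem needs.
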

\begin{proof}
	We only need to see that the Dirac--type equation \eqref{eqn:Dirac} is equivalent to the compatibility condition for $\tilde{X}$ \eqref{eqn:compatibility}.
	To do this, first we calculate using coordinates that
		\begin{align*}
			B^{-1} \dif{B} \wedge \dif{X}
%				&= (B^{-1}B_z \dif{z} + B^{-1}B_{\bar{z}} \dif{\bar{z}}) \wedge (X_z \dif{z} + X_{\bar{z}} \dif{\bar{z}}) \\
				&= (B^{-1} B_z X_{\bar{z}} - B^{-1} B_{\bar{z}} X_z)(\dif{z} \wedge \dif{\bar{z}}) \\
%				&= (e^\sigma F \Omega F^{-1} F  \tfrac{E_1 + i E_2}{2} F^*   - e^\sigma F \Lambda F^{-1} F  \tfrac{E_1 - i E_2}{2} F^*)(\dif{z} \wedge \dif{\bar{z}}) \\
				&= e^\sigma F (\Omega \tfrac{E_1 + i E_2}{2} - \Lambda  \tfrac{E_1 - i E_2}{2}) F^*(\dif{z} \wedge \dif{\bar{z}}) \\
				&= e^\sigma F \begin{pmatrix} \Omega_{12} & -\Lambda_{11} \\ \Omega_{22} & -\Lambda_{21} \end{pmatrix} F^*(\dif{z} \wedge \dif{\bar{z}}).
		\end{align*}
	On the other hand, we also have using Corollary~\ref{cor:conj},
		\begin{align*}
			 -\rho \dif{X} \wedge \tilde{\mathfrak{P}} \dif{X}
				&= -\rho (X_z \dif{z} + X_{\bar{z}} \dif{\bar{z}}) \wedge (\tilde{\mathfrak{P}}X_z \dif{z} + \tilde{\mathfrak{P}}X_{\bar{z}} \dif{\bar{z}}) \\
%				&=  -\rho (e^{2\sigma} F  \tfrac{E_1 - i E_2}{2} F^* \tilde{\mathfrak{P}} F  \tfrac{E_1 + i E_2}{2} F^* - e^{2\sigma} F  \tfrac{E_1 + i E_2}{2} F^* \tilde{\mathfrak{P}} F  \tfrac{E_1 - i E_2}{2} F^*)(\dif{z} \wedge \dif{\bar{z}}) \\
				&=  -\rho e^{2\sigma} F  ( \tfrac{E_1 - i E_2}{2} \tilde{\mathfrak{P}}  \tfrac{E_1 + i E_2}{2} -  \tfrac{E_1 + i E_2}{2} \tilde{\mathfrak{P}} \tfrac{E_1 - i E_2}{2}) F^*(\dif{z} \wedge \dif{\bar{z}}) \\
				&= -\rho e^{2\sigma} F \begin{pmatrix} -1 & 0 \\ 0 & 0 \end{pmatrix} F^*(\dif{z} \wedge \dif{\bar{z}}).
		\end{align*}
	Thus the Dirac--type equation is satisfied if and only if
		\begin{equation}\label{eqn:DiracDirty}
			\begin{pmatrix} \Omega_{12} & -\Lambda_{11} \\ \Omega_{22} & -\Lambda_{21} \end{pmatrix} =  \begin{pmatrix} \rho e^\sigma & 0 \\ 0 & 0 \end{pmatrix}.
		\end{equation}
	
	Now, for one direction of the proof, suppose the compatibility condition \eqref{eqn:compatibility} is satisfied so that $\tilde{X}$ is well-defined.
	Thus, we have the relations \eqref{eqn:compatibility2} and \eqref{eqn:compatibility3}, which imply that \eqref{eqn:DiracDirty} holds, that is, $B$ satisfies the Dirac--type equation \eqref{eqn:Dirac}.
	
	For the other direction, if $B$ satisfies the Dirac--type equation \eqref{eqn:Dirac}, then \eqref{eqn:DiracDirty} holds, which implies that the compatibility condition \eqref{eqn:compatibility} is satisfied.
\end{proof}

\subsection{Spinor representation}
We have now gathered the necessary ingredients to obtain the spinor representation of conformal spacelike surfaces in isotropic $3$-space.
By Lemma~\ref{lemm:spPlanes}, we can map any spacelike plane through the origin to another one via isometries fixing the origin.
Thus, any conformal spacelike immersion can be obtained as a spin transformation of the map $X: \Sigma \to \mathbb{I}^3$ where
	\[
		X(u,v) := \begin{pmatrix} 0 & u + i v \\ u - i v & 0 \end{pmatrix} = \begin{pmatrix} 0 & z \\ \bar{z} & 0 \end{pmatrix}.
	\]
Then we have that $F: \Sigma \to \SUoneohone$ as in \eqref{eqn:frame} is the identity map.

Now to consider spin transfomations of $X$, write $B: \Sigma \to \mathcal{G}$ explicitly as
	\[
		B = \begin{pmatrix} \alpha & \bar{\beta} \\ 0 & \bar{\alpha} \end{pmatrix},
	\]
for some complex functions $\alpha, \beta$ defined on $\Sigma$, and note that $\alpha$ never vanishes since $\det B \neq 0$ everywhere.
Then we calculate
	\begin{align*}
		\Omega &= F^{-1} B^{-1} B_z F = B^{-1} B_z = \frac{1}{|\alpha|^2}
			\begin{pmatrix}
				\bar{\alpha} \alpha_z & \bar{\alpha} \bar{\beta}_z - \bar{\beta}\bar{\alpha}_z \\
				0 & \alpha \bar{\alpha}_z
			\end{pmatrix} \\
		\Lambda &= F^{-1} B^{-1} B_{\bar{z}} F = B^{-1} B_{\bar{z}} = \frac{1}{|\alpha|^2}
			\begin{pmatrix}
				\bar{\alpha} \alpha_{\bar{z}} & \bar{\alpha} \bar{\beta}_{\bar{z}} - \bar{\beta}\bar{\alpha}_{\bar{z}} \\
				0 & \alpha \bar{\alpha}_{\bar{z}}
			\end{pmatrix}.
	\end{align*}
	
The map $B$ must satisfy the Dirac--type equation; using equivalent conditions in \eqref{eqn:DiracDirty}, we find that since $\alpha$ never vanishes
	\[
		\alpha_{\bar{z}} = 0, \quad \alpha \beta_{\bar{z}} \in \mathbb{R}.
	\]
For such $B$, we can then calculate
	\[
		\tilde{X}_z = B X_z B^*
			= \begin{pmatrix} \alpha & \bar{\beta} \\ 0 & \bar{\alpha} \end{pmatrix}
				\begin{pmatrix} 0 & 1 \\ 0 & 0 \end{pmatrix}
				\begin{pmatrix} \bar{\alpha} & 0 \\ \beta & \alpha \end{pmatrix}
			= \begin{pmatrix} \alpha \beta & \alpha^2 \\ 0 & 0 \end{pmatrix},
	\]
or in terms of coordinates,
	\begin{equation}\label{eqn:confDeri}
		\tilde{x}_z = \frac{1}{2}\left(\frac{\beta}{\alpha}, 1, -i \right) \alpha^2.
	\end{equation}
Summarizing:
\begin{theorem}[Spinor representation of conformal immersions]\label{thm:spinor}
	Any conformal immersion $\tilde{x}: \Sigma \to \mathbb{I}^3$ can locally be represented as
		\begin{equation}\label{eqn:representation}
			\tilde{x} =  \frac{1}{2} \Re \int \left(\frac{\beta}{\alpha}, 1, -i \right) \alpha^2 \dif{z},
		\end{equation}
	for some non-vanishing holomorphic function $\alpha: \Sigma \to \mathbb{C}$, and a complex valued function $\beta: \Sigma \to \mathbb{C}$ satisfying
		\begin{equation}\label{eqn:compatRep}
			\alpha \beta_{\bar{z}} - \bar{\alpha} \bar{\beta}_z = 0.
		\end{equation}
	The metric of $\tilde{x}$ then is
		\[
			\dif{s}^2 = |\alpha|^4 \dif{z}\dif{\bar{z}},
		\]
	while the mean curvature $\tilde{H}$ of $\tilde{x}$ is given by
		\begin{equation}\label{eqn:newMean2}
			\tilde{H} = \frac{\beta_{\bar{z}}}{\bar{\alpha}|\alpha|^2}.
		\end{equation}
	Moreover, the lightlike Gauss map $\tilde{g}: \Sigma \to \mathcal{L}$ is given by
		\begin{equation}\label{eqn:newGauss2}
			\tilde{g} = -\tfrac{1}{2}\left(1 + |\tfrac{\beta}{\alpha}|^2, 2 \Re \tfrac{\beta}{\alpha}, -2 \Im \tfrac{\beta}{\alpha}, -1 + |\tfrac{\beta}{\alpha}|^2\right)^t.
		\end{equation}
\end{theorem}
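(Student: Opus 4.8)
The plan is to assemble the statement from the reduction and computations already in place. Recall that the flat plane $X(u,v)=\begin{psmallmatrix}0 & z\\ \bar z & 0\end{psmallmatrix}$ has frame $F\equiv\mathrm{id}$ and conformal factor $\sigma\equiv 0$, and that, by Lemma~\ref{lemm:spPlanes} (which lets one choose an $\SUoneohone$--valued adapted frame), an arbitrary conformal immersion $\tilde x:\Sigma\to\mathbb{I}^3$ with frame $\tilde F:\Sigma\to\SUoneohone$ and factor $\tilde\sigma$ is a spin transformation of $X$ via $B:=e^{\tilde\sigma/2}\tilde F:\Sigma\to\mathcal{G}$, since then $\dif{\tilde X}=B\,\dif{X}\,B^*$. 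So the proof reduces to: (i) determining which $B:\Sigma\to\mathcal{G}$ give a well-defined spin transformation of this particular $X$; (ii) integrating the resulting $\dif{\tilde X}$; and (iii) expressing the metric, the mean curvature and the lightlike Gauss map of $\tilde X$ through the entries of $B$.

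For (i), write $B=\begin{psmallmatrix}\alpha & \bar\beta\\ 0 & \bar\alpha\end{psmallmatrix}$; since $\det B=|\alpha|^2\neq 0$ everywhere, $\alpha:\Sigma\to\mathbb{C}$ is nowhere zero. Substituting $F=\mathrm{id}$ and $\sigma\equiv 0$ into the Dirac--type equation of Theorem~\ref{thm:Dirac}, in its explicit matrix form \eqref{eqn:DiracDirty}, and using the already-computed $\Omega=B^{-1}B_z$ and $\Lambda=B^{-1}B_{\bar z}$, all four scalar conditions collapse to $\alpha_{\bar z}=0$ together with $\alpha\beta_{\bar z}\in\mathbb{R}$; the latter is precisely \eqref{eqn:compatRep}. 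Hence $\alpha$ is holomorphic and nonvanishing. For (ii), $\tilde X_z=BX_zB^*=\begin{psmallmatrix}\alpha\beta & \alpha^2\\ 0 & 0\end{psmallmatrix}$, which in the coordinate chart is exactly \eqref{eqn:confDeri}; integrating the form $\tilde x_z\,\dif{z}$ over the simply connected $\Sigma$ and then adding its conjugate (equivalently, taking twice the real part, since $\tilde x$ is real valued) gives \eqref{eqn:representation}.

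Conversely, I would verify that \eqref{eqn:representation} always produces a conformal immersion when $\alpha$ is holomorphic and nonvanishing and $\beta$ satisfies \eqref{eqn:compatRep}. The integral is well defined because the form $\tilde x_z\,\dif{z}$ is closed modulo conjugation: $\partial_{\bar z}\tilde x_z=\tfrac12(\alpha\beta_{\bar z},0,0)$ is a \emph{real} vector, which is exactly where $\alpha_{\bar z}=0$ and \eqref{eqn:compatRep} both get used. From \eqref{eqn:confDeri} one reads off $\langle\tilde x_z,\tilde x_z\rangle=0$ and $2\langle\tilde x_z,\tilde x_{\bar z}\rangle=|\alpha|^4\neq 0$, so $\tilde x$ is conformally immersed with $\dif s^2=|\alpha|^4\,\dif z\,\dif{\bar z}$ (this also follows from the general change-of-metric formula \eqref{eqn:metric} since $\det B=|\alpha|^2$).

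Finally, for (iii): since $X$ is a plane, $H\equiv 0$, so \eqref{eqn:newMean} gives $\tilde H=\rho/\det B=\rho/|\alpha|^2$ with $\rho=e^{-\sigma}\Omega_{12}=\bar\beta_z/\alpha$; rewriting $\bar\beta_z/\alpha=\beta_{\bar z}/\bar\alpha$ by means of \eqref{eqn:compatRep} yields \eqref{eqn:newMean2}. For the lightlike Gauss map, $\tilde F=fB$ with $f=(\det B)^{-1/2}=|\alpha|^{-1}\in\mathbb{R}^+$, so $\tilde G=\tilde F\tilde{\mathfrak{P}}\tilde F^*=-\begin{psmallmatrix}|\beta/\alpha|^2 & \overline{\beta/\alpha}\\ \beta/\alpha & 1\end{psmallmatrix}$, and translating this Hermitian matrix back into $\mathbb{L}^4$ coordinates gives \eqref{eqn:newGauss2} (the identities $\langle\tilde g,\mathfrak{p}\rangle=1$ and $\langle\tilde g,\tilde g\rangle=0$ serve as consistency checks). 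I do not expect a real obstacle here, the argument being the reduction to a fixed base surface together with bookkeeping; the one delicate point is that the reality condition \eqref{eqn:compatRep} must be invoked twice --- once for the path-independence of the integral in \eqref{eqn:representation}, and once to bring $\tilde H$ into the displayed form $\beta_{\bar z}/(\bar\alpha|\alpha|^2)$ rather than the not-manifestly-equal $\bar\beta_z/(\alpha|\alpha|^2)$.
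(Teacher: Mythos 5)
Your proposal is correct and follows essentially the same route as the paper: reduce to the flat plane $X=\begin{psmallmatrix}0&z\\ \bar z&0\end{psmallmatrix}$ via a spin transformation $B\in\mathcal{G}$, extract $\alpha_{\bar z}=0$ and $\alpha\beta_{\bar z}\in\mathbb{R}$ from \eqref{eqn:DiracDirty}, integrate \eqref{eqn:confDeri}, and read off the metric, $\tilde H$ and $\tilde G=A\tilde{\mathfrak{P}}A^*$ exactly as in the paper (your closedness check of $\Re(\tilde x_z\,\dif z)$ corresponds to the paper's direct verification following the proof). Your explicit choice $B=e^{\tilde\sigma/2}\tilde F$ and the remark that \eqref{eqn:compatRep} is used again to pass from $\bar\beta_z/(\alpha|\alpha|^2)$ to $\beta_{\bar z}/(\bar\alpha|\alpha|^2)$ are welcome clarifications of steps the paper leaves implicit.
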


\begin{proof}
	Since the representation by the integral formula \eqref{eqn:representation} is already proven, we proceed to prove the rest of the claims.
	For the metric, we use the relation on metric \eqref{eqn:metric} to see that
		\[
			\dif{\tilde{s}}^2 = \det B^2 \dif{s}^2 = |\alpha|^4 \dif{z}\dif{\bar{z}},
		\]
	while for the mean curvature $\tilde{H}$, we use \eqref{eqn:newMean} with the value of $\rho$ given by \eqref{eqn:compatibility3}:
		\[
			\tilde{H} = \frac{\rho}{\det B} = \frac{\beta_{\bar{z}} }{\bar{\alpha}|\alpha|^2}.
		\]
	For the lightlike Gauss map, first note that if $A = f B : \Sigma \to \SUoneohone$ for some $f : \Sigma \to \mathbb{R}^+$, then 
		\[
			A = \tfrac{1}{|\alpha|} B.
		\]
	Thus the new lightlike Gauss map is given by
		\[
			\tilde{G} = A \tilde{\mathfrak{P}} A^*
				= \frac{1}{|\alpha|^2} \begin{pmatrix} \alpha & \bar{\beta} \\ 0 & \bar{\alpha} \end{pmatrix}
					\begin{pmatrix} 0 & 0 \\ 0 & -1 \end{pmatrix}
					\begin{pmatrix} \bar{\alpha} & 0 \\ \beta & \alpha \end{pmatrix}
				= - \frac{1}{|\alpha|^2}\begin{pmatrix} |\beta|^2 & \alpha \bar{\beta} \\ \bar{\alpha}\beta & |\alpha|^2 \end{pmatrix}.
		\]
	Since $\tilde{G} \in \hermtwo$, we can let $\tilde{g}$ be the corresponding vector in $\mathbb{L}^4$ to obtain the desired conclusion.
\end{proof}

\begin{remark}\label{rem:kenmotsu}
	We remark here that one can also prescribe the mean curvature function $H : \Sigma \to \mathbb{R}$ first, and find holomorphic function $\alpha$ and a complex function satisfying \eqref{eqn:newMean2}.
	Such $\alpha$ and $\beta$ will satisfy the compatibility condition \eqref{eqn:compatRep}, thus giving us the \emph{Kenmotsu representation} of conformal surfaces with prescribed mean curvature \cite[Theorem~2]{kenmotsu_weierstrass_1979}.
\end{remark}

	To check the robustness of the representation, we can verify all the claims of Theorem~\ref{thm:spinor} directly as follows.
	Let $x: \Sigma \to \mathbb{I}^3$ be defined via \eqref{eqn:representation}.
	To calculate the metric, first note that with $x_z$ as in \eqref{eqn:confDeri}, we have
		\[
			\langle x_z, x_z \rangle
				= \tfrac{\alpha^4}{4}  \Big\langle \left(\tfrac{\beta}{\alpha}, 1, -i, \tfrac{\beta}{\alpha} \right)^t, \left(\tfrac{\beta}{\alpha}, 1, -i, \tfrac{\beta}{\alpha} \right)^t \Big\rangle
%				= \tfrac{\alpha^4}{4} \left( -\tfrac{\beta^2}{\alpha^2} + 1 - 1 + \tfrac{\beta^2}{\alpha^2} \right)
				= 0
		\]
	while using $x_{\bar{z}} = \frac{1}{2}\left(\frac{\bar{\beta}}{\bar{\alpha}}, 1, i \right) \bar{\alpha}^2$, we have
		\[
			2 \langle x_z, x_{\bar{z}} \rangle
				= \tfrac{|\alpha|^4}{2} \Big\langle \left(\tfrac{\beta}{\alpha}, 1, -i, \tfrac{\beta}{\alpha} \right), \left(\tfrac{\bar{\beta}}{\bar{\alpha}}, 1, i, \tfrac{\bar{\beta}}{\bar{\alpha}} \right) \Big\rangle
%				= \tfrac{|\alpha|^4}{2} \Big(-\left|\tfrac{\beta}{\alpha}\right| + 1 + 1 + \left|\tfrac{\beta}{\alpha}\right | \Big)
				= |\alpha|^4.
		\]
Thus the metric of the given surface $x$ is
	\[
		\dif{s}^2 = 2 \langle x_z, x_{\bar{z}} \rangle  \dif{z}\dif{\bar{z}} = |\alpha|^4 \dif{z}\dif{\bar{z}}.
	\]

On the other hand, with $g$ given as in \eqref{eqn:newGauss2}, we can check that
	\[
		\langle g, g \rangle = 0, \quad \langle g, \mathfrak{p} \rangle = 1, \quad \langle x_z, g \rangle = 0,
	\]
so that $g$ is the lightlike Gauss map of $x$.
	
Finally, since $\alpha$ is holomorphic, we have
	\[
		x_{z\bar{z}}
%			= \left(\tfrac{1}{2}\left(\tfrac{\beta}{\alpha}, 1, -i, \tfrac{\beta}{\alpha} \right) \alpha^2\right)_{\bar{z}}
			= \tfrac{1}{2}(\alpha \beta, \alpha^2, -i \alpha^2, \alpha \beta)^t_{\bar{z}}
			= \tfrac{1}{2}(\alpha \beta_{\bar{z}}, 0, 0, \alpha \beta_{\bar{z}})^t,
	\]
and using the formula for the mean curvature \eqref{eqn:meanC}, we calculate
	\begin{align*}
		H
			&= \tfrac{2}{|\alpha|^4} \langle x_{z\bar{z}}, g \rangle \\
%    				&= \tfrac{2}{|\alpha|^4} \langle \tfrac{1}{2}\left(\alpha \beta_{\bar{z}}, 0, 0, \alpha \beta_{\bar{z}}\right)^t, -\tfrac{1}{2}\left(1 + |\tfrac{\beta}{\alpha}|^2, 2 \Re \tfrac{\beta}{\alpha}, -2 \Im \tfrac{\beta}{\alpha}, -1 + |\tfrac{\beta}{\alpha}|^2\right)^t \rangle \\
			&= -\tfrac{1}{2|\alpha|^4} \langle \left(\alpha \beta_{\bar{z}}, 0, 0, \alpha \beta_{\bar{z}}\right)^t, \left(1 + |\tfrac{\beta}{\alpha}|^2, 2 \Re \tfrac{\beta}{\alpha}, -2 \Im \tfrac{\beta}{\alpha}, -1 + |\tfrac{\beta}{\alpha}|^2\right)^t \rangle \\
%				&= -\tfrac{1}{2|\alpha|^4}  \left(-\alpha \beta_{\bar{z}} (1 + |\tfrac{\beta}{\alpha}|^2) + \alpha \beta_{\bar{z}} (-1 + |\tfrac{\beta}{\alpha}|^2) \right) \\
			&= \frac{\beta_{\bar{z}}}{|\alpha|^2\bar{\alpha}}.
	\end{align*}

\subsection{Weierstrass-type representation and Kenmotsu-type representation for cmc surfaces}
Now suppose that the conformal immersion is minimal, i.e.\ $H = 0$, so that \eqref{eqn:newMean2} implies that $\beta_{\bar{z}} = 0$.
Since $\alpha$ and $\beta$ are now both holomorphic, define a meromorphic function $h: \Sigma \to \mathbb{C}$ and a holomorphic $1$--form $\omega$ via
	\[
		h := \frac{\beta}{\alpha}, \quad \omega := \alpha^2 \dif{z}.
	\]
Then the Weierstrass-type representation for minimal surfaces in isotropic $3$-space \cite[Equation~8.31]{strubecker_differentialgeometrie_1942} is obtained as an application of the spinor representation:
\begin{theorem}[Weierstrass-type representation of minimal immersions]\label{thm:Weierstrass}
	Any minimal immersion $x: \Sigma \to \mathbb{I}^3$ can locally be represented as
		\[
			x =  \frac{1}{2} \Re \int \left(h, 1, -i \right) \omega,
		\]
	for some meromorphic $h: \Sigma \to \mathbb{C}$ and holomorphic $1$--form $\omega$ such that $h^2 \omega$ is holomorphic.
	
	The metric of $x$ is then given by
		\[
			\dif{s}^2 = |\omega|^2,
		\]
	the Hopf differential
		\[
			Q = \frac{1}{2}\omega \dif{h},
		\]
	and the lightlike Gauss map $g: \Sigma \to \mathcal{L}$
		\[
			g = -\tfrac{1}{2}\left(1 + |h|^2, 2 \Re h, -2 \Im h, -1 + |h|^2\right).
		\]
\end{theorem}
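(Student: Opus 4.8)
The plan is to derive this statement as the $H\equiv 0$ specialization of the spinor representation in Theorem~\ref{thm:spinor}. First I would apply Theorem~\ref{thm:spinor} to an arbitrary minimal immersion $x$, obtaining the local data: a non-vanishing holomorphic $\alpha$ and a complex function $\beta$ with $\alpha\beta_{\bar{z}}-\bar{\alpha}\bar{\beta}_z=0$, so that $x=\tfrac12\Re\int(\tfrac{\beta}{\alpha},1,-i)\alpha^2\,\dif{z}$, with $\dif{s}^2=|\alpha|^4\,\dif{z}\,\dif{\bar{z}}$, mean curvature $\tilde{H}=\beta_{\bar{z}}/(\bar{\alpha}|\alpha|^2)$ by \eqref{eqn:newMean2}, and lightlike Gauss map given by \eqref{eqn:newGauss2}. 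Since $\alpha$ never vanishes, $H\equiv 0$ is equivalent to $\beta_{\bar{z}}\equiv 0$, i.e.\ $\beta$ is holomorphic; note that the compatibility condition \eqref{eqn:compatRep} then holds automatically. Setting $h:=\beta/\alpha$ and $\omega:=\alpha^2\,\dif{z}$ produces a meromorphic $h$ (holomorphic wherever $x$ immerses), a holomorphic $1$-form $\omega$, and $h^2\omega=\beta^2\,\dif{z}$ holomorphic; the integral formula and the metric become exactly $x=\tfrac12\Re\int(h,1,-i)\,\omega$ and $\dif{s}^2=|\omega|^2$, while substituting $h=\beta/\alpha$ into \eqref{eqn:newGauss2} gives $g=-\tfrac12(1+|h|^2,2\Re h,-2\Im h,-1+|h|^2)$. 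The representation is exhaustive precisely because Theorem~\ref{thm:spinor} is, and the constraint ``$h^2\omega$ holomorphic'' is simply the reformulation of ``$\beta$ holomorphic'': at a pole of $h$ of order $m$ it forces $\omega$ to vanish to order at least $2m$, hence $h\omega$ is holomorphic and the formula is well-defined, such zeros of $\omega$ being the branch points.

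The only ingredient not already contained in Theorem~\ref{thm:spinor} is the Hopf differential, which I would compute directly from $Q\,\dif{z}^2=\langle x_{zz},g\rangle\,\dif{z}^2$ as in Section~\ref{subsect:threeone}. Using the identification $\mathbb{I}^3=\{(\mathbf{l},\mathbf{x},\mathbf{y},\mathbf{l})^t\}$, the $\mathbb{L}^4$-valued first derivative is $x_z=\tfrac12(\alpha\beta,\alpha^2,-i\alpha^2,\alpha\beta)^t$ with holomorphic entries, so $x_{zz}=\tfrac12\bigl((\alpha\beta)',2\alpha\alpha',-2i\alpha\alpha',(\alpha\beta)'\bigr)^t$. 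Pairing against $g$ with the Minkowski inner product, the contributions proportional to $|h|^2$ cancel between the first and last components, and the $\mathbf{x}$- and $\mathbf{y}$-components recombine into $h=\Re h+i\Im h$, leaving $\langle x_{zz},g\rangle=\tfrac12(\alpha\beta'-\alpha'\beta)$. Since $\dif{h}=(\alpha\beta'-\alpha'\beta)\alpha^{-2}\,\dif{z}$, we get $\omega\,\dif{h}=(\alpha\beta'-\alpha'\beta)\,\dif{z}^2$, hence $Q\,\dif{z}^2=\tfrac12\omega\,\dif{h}$, as claimed.

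I do not expect a serious obstacle: essentially all of the analytic content is packaged in Theorem~\ref{thm:spinor}, and what remains is bookkeeping. The two points requiring a little care are (i) checking that the change of variables $(\alpha,\beta)\mapsto(h,\omega)$ is the correct reformulation, in particular that ``$h^2\omega$ holomorphic'' faithfully encodes the holomorphy of $\beta$ while accommodating the branch points arising from zeros of $\omega$, and (ii) carrying out the short Hopf differential computation cleanly, keeping track of the identification between coordinates on $\mathbb{I}^3$ and vectors in $\mathbb{L}^4$.
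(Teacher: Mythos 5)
Your proposal is correct and follows essentially the same route as the paper: specialize the spinor representation of Theorem~\ref{thm:spinor} to $H=0$ via \eqref{eqn:newMean2}, conclude $\beta$ is holomorphic, and substitute $h=\beta/\alpha$, $\omega=\alpha^2\,\dif{z}$. Your direct computation $Q=\langle x_{zz},g\rangle=\tfrac12(\alpha\beta'-\alpha'\beta)=\tfrac12\,\omega\,\dif{h}/\dif{z}^2$ is accurate and cleanly fills in the Hopf-differential claim, which the paper leaves as an unproved substitution.
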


On the other hand, suppose that the conformal immersion has non-zero cmc $H \neq 0$.
Then by \eqref{eqn:newMean2} we have
	\[
		\beta_{\bar{z}} = H \bar{\alpha}^2 \alpha
	\]
for some holomorphic $\alpha: \Sigma \to \mathbb{C}$.
Thus,
	\[
		\beta = H \alpha \int \bar{\alpha}^2 \dif{\bar{z}} + \gamma
	\]
for some holomorphic $\gamma: \Sigma \to \mathbb{C}$.
Therefore, if we define $h_1, h_2: \Sigma \to \mathbb{C}$ via
	\[
		h_1 := H \int \alpha^2 \dif{z}, \quad h_2 := \frac{\gamma}{\alpha},
	\]
then we have that $h_1$ is holomorphic, while $h_2$ is meromorphic, satisfying
	\[
		\frac{\beta}{\alpha} = \bar{h}_1 + h_2.
	\]
Thus the Kenmotsu-type representation for non-zero constant mean curvature $H$ surfaces can also be obtained as an application of the spinor representation:
\begin{theorem}[Kenmotsu-type representation of cmc immersions]\label{thm:Kenmotsu}
	Any non-zero cmc $H$ immersion $x: \Sigma \to \mathbb{I}^3$ can locally be represented as
		\[
			x =  \frac{1}{2} \Re \int \left(\bar{h}_1 + h_2, 1, -i \right) \omega,
		\]
	for some holomorphic $h_1: \Sigma \to \mathbb{C}$, meromorphic $h_2: \Sigma \to \mathbb{C}$ and holomorphic $1$--form $\omega$ such that $h_2^2 \omega$ is holomorphic while
		\begin{equation}\label{eqn:h1cond}
			\dif{h}_1 = H\omega.
		\end{equation}
	
	The metric of $x$ is then given by
		\[
			\dif{s}^2 = |\omega|^2,
		\]
	the Hopf differential
		\[
			Q = \frac{1}{2}\omega  \dif{h_2} ,
		\]
	and the lightlike Gauss map $g: \Sigma \to \mathcal{L}$
		\[
			g = -\tfrac{1}{2}\left(1 + |h|^2, 2 \Re h, -2 \Im h, -1 + |h|^2\right)
		\] 
	for $h := \bar{h}_1 + h_2$.
	We call $(h_2, \omega)$ the Kenmotsu data.
\end{theorem}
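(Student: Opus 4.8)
The plan is to obtain everything from the spinor representation of Theorem~\ref{thm:spinor}, specialised to constant non-zero mean curvature. The integral formula and the decomposition $\beta/\alpha = \bar h_1 + h_2$ have effectively already been produced in the paragraph preceding the theorem: beginning with $\tilde x = \tfrac12\Re\int(\beta/\alpha,1,-i)\,\alpha^2\,\dif{z}$ for a holomorphic nowhere-vanishing $\alpha$, imposing $\tilde H = H$ constant and applying \eqref{eqn:newMean2} forces $\beta_{\bar z} = H\bar\alpha^2\alpha$; on a locally simply connected $\Sigma$ this $\bar\partial$-equation is solved by $\beta = H\alpha\int\bar\alpha^2\,\dif{\bar z} + \gamma$ with $\gamma$ holomorphic, whereupon $h_1 := H\int\alpha^2\,\dif{z}$ is holomorphic, $h_2 := \gamma/\alpha$ is meromorphic, $\omega := \alpha^2\,\dif{z}$ satisfies $\dif{h_1} = H\omega$ as in \eqref{eqn:h1cond}, and $h_2^2\omega = \gamma^2\,\dif{z}$ is holomorphic. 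What then remains for the proof is the verification of the three geometric invariants.

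For the metric and the lightlike Gauss map there is nothing new: the change-of-metric identity \eqref{eqn:metric} already used in Theorem~\ref{thm:spinor} gives $\dif{s}^2 = |\alpha|^4\,\dif{z}\,\dif{\bar z} = |\omega|^2$, and substituting $h := \beta/\alpha = \bar h_1 + h_2$ into \eqref{eqn:newGauss2} yields the stated $g$. As a cross-check one can rerun the direct computation performed after Theorem~\ref{thm:spinor}, confirming $\langle g,g\rangle = 0$, $\langle g,\mathfrak{p}\rangle = 1$ and $\langle x_z,g\rangle = 0$, so that $g$ is indeed the lightlike Gauss map of $x$.

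The only genuine computation is the Hopf differential. I would differentiate $x_z = \tfrac12\alpha^2\,(\bar h_1 + h_2,\,1,\,-i,\,\bar h_1 + h_2)^t$ with respect to $z$; because $\bar h_1$ is anti-holomorphic while $h_2$ and $\alpha$ are holomorphic, this splits as $x_{zz} = \tfrac{(\alpha^2)_z}{\alpha^2}\,x_z + \tfrac12\alpha^2 h_2'\,\mathfrak{p}$ with $\mathfrak{p} = (1,0,0,1)^t$ isolating precisely the $z$-derivative of $h_2$. Using $\langle x_z,g\rangle = 0$ (Lemma~\ref{lemm:lightlikeGauss}) and $\langle\mathfrak{p},g\rangle = 1$ then gives $Q\,\dif{z}^2 = \langle x_{zz},g\rangle\,\dif{z}^2 = \tfrac12\alpha^2 h_2'\,\dif{z}^2 = \tfrac12\,\omega\,\dif{h_2}$, matching the claim; in particular $Q$ depends only on the meromorphic datum $h_2$ and not on $\bar h_1$.

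I do not anticipate a serious obstacle: the argument is bookkeeping on top of Theorem~\ref{thm:spinor}. Two points want care. First, the passage $\beta_{\bar z} = H\bar\alpha^2\alpha \Rightarrow \beta = H\alpha\int\bar\alpha^2\,\dif{\bar z} + \gamma$ needs the primitives $\int\alpha^2\,\dif{z}$ and $\int\bar\alpha^2\,\dif{\bar z}$ to exist, which is what restricts the statement to a local one. Second, one must track which pieces of $\beta/\alpha$ are holomorphic and which meromorphic, and verify that the holomorphy of $h_2^2\omega$ (trivial in the chart where $\alpha$ has no zeros) persists once one reformulates the data on an arbitrary Riemann surface where $\omega$ may vanish and $h_2$ may have poles. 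Neither of these blocks the proof.
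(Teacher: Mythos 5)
Your proposal is correct and follows essentially the same route as the paper: specialize the spinor representation of Theorem~\ref{thm:spinor} to constant $H$, solve $\beta_{\bar z}=H\bar\alpha^2\alpha$ locally as $\beta=H\alpha\int\bar\alpha^2\,\dif{\bar z}+\gamma$, and set $h_1=H\int\alpha^2\,\dif{z}$, $h_2=\gamma/\alpha$, $\omega=\alpha^2\,\dif{z}$, with the metric and lightlike Gauss map read off from Theorem~\ref{thm:spinor}. Your explicit check of the Hopf differential, splitting $x_{zz}=\tfrac{(\alpha^2)_z}{\alpha^2}x_z+\tfrac12\alpha^2 h_2'\,\mathfrak{p}$ and pairing with $g$, is a correct verification of a step the paper states without detail.
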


\begin{remark}
	We note here that the Weierstrass-type representation for minimal surfaces in Theorem~\ref{thm:Weierstrass} is a special case of the Kenmotsu-type representation for cmc surfaces in Theorem~\ref{thm:Kenmotsu}, as \eqref{eqn:h1cond} says that if $H = 0$, then $h_1 \equiv \text{constant}$, and $h:= \bar{h}_1 + h_2$ is meromorphic with holomorphic $h^2 \omega$.
\end{remark}

\subsection{Examples of cmc surfaces}\label{subsect:examples}
In this section, we compute some cmc surfaces explicitly using the Kenmotsu-type representation in Theorem~\ref{thm:Kenmotsu}.

\begin{description}[style=unboxed,leftmargin=0cm]
	\item[Spheres]
		For some constant $H$, let $\omega = e^z \dif{z}$ and $h_2 = 0$.
	Then
		\[
			h_1 = H \int e^z \dif{z} = H e^z.
		\]
	Then by the Kenmotsu-type representation, we have
		\[
			x_z = \frac{1}{2}(H e^{2u}, e^z, -i e^z), \quad x_{\bar{z}} = \frac{1}{2} (H e^{2 u},  e^{\bar {z}}, i e^{\bar {z}}),
		\]
	so that
		\begin{align*}
			 x_u &= x_z + x_{\bar{z}} = \frac{1}{2}(2 H e^{2u}, e^z + e^{\bar {z}}, -i (e^z - e^{\bar {z}})) = (H e^{2u}, e^u \cos v, e^u \sin v),  \\
			 x_v &= i(x_z - x_{\bar{z}}) = \frac{1}{2}(0, i(e^z - e^{\bar {z}}) ,  e^z + e^{\bar {z}}) = (0, -e^u \sin v, e^u \cos v).
		\end{align*}
	Integrating, we see that
		\[
			x = (\tfrac{1}{2}H e^{2u}, e^u \cos v, e^u \sin v),
		\]
	giving us a sphere with radius $\frac{1}{H}$ by \eqref{eqn:sphereI3} (see Figure~\ref{fig:sphereI3}).
	
	In fact, if $h_2 = 0$, then the resulting surface is a sphere: Kenmotsu-type representation tells us that
		\[
			x_z \dif{z} = (\bar{h}_1, 1, -i) \omega, \quad x_{\bar{z}} \dif{\bar{z}} = (h_1, 1, i) \bar{\omega}
		\]
	Since we have that $h_1$ is holomorphic,
		\[
			\dif{h}_1 = h_{1,z} \dif{z} = H \omega,
		\]
	so that
		\[
			x = \frac{1}{2 H}\left(h_1 \bar{h}_1, 2 \Re h_1, 2 \Im h_1\right)
		\]
	which is a sphere with radius $\frac{1}{H}$ by \eqref{eqn:sphereI3}.

	\begin{figure}
		\centering
		\savebox{\largestimage}{\includegraphics[width=0.435\textwidth]{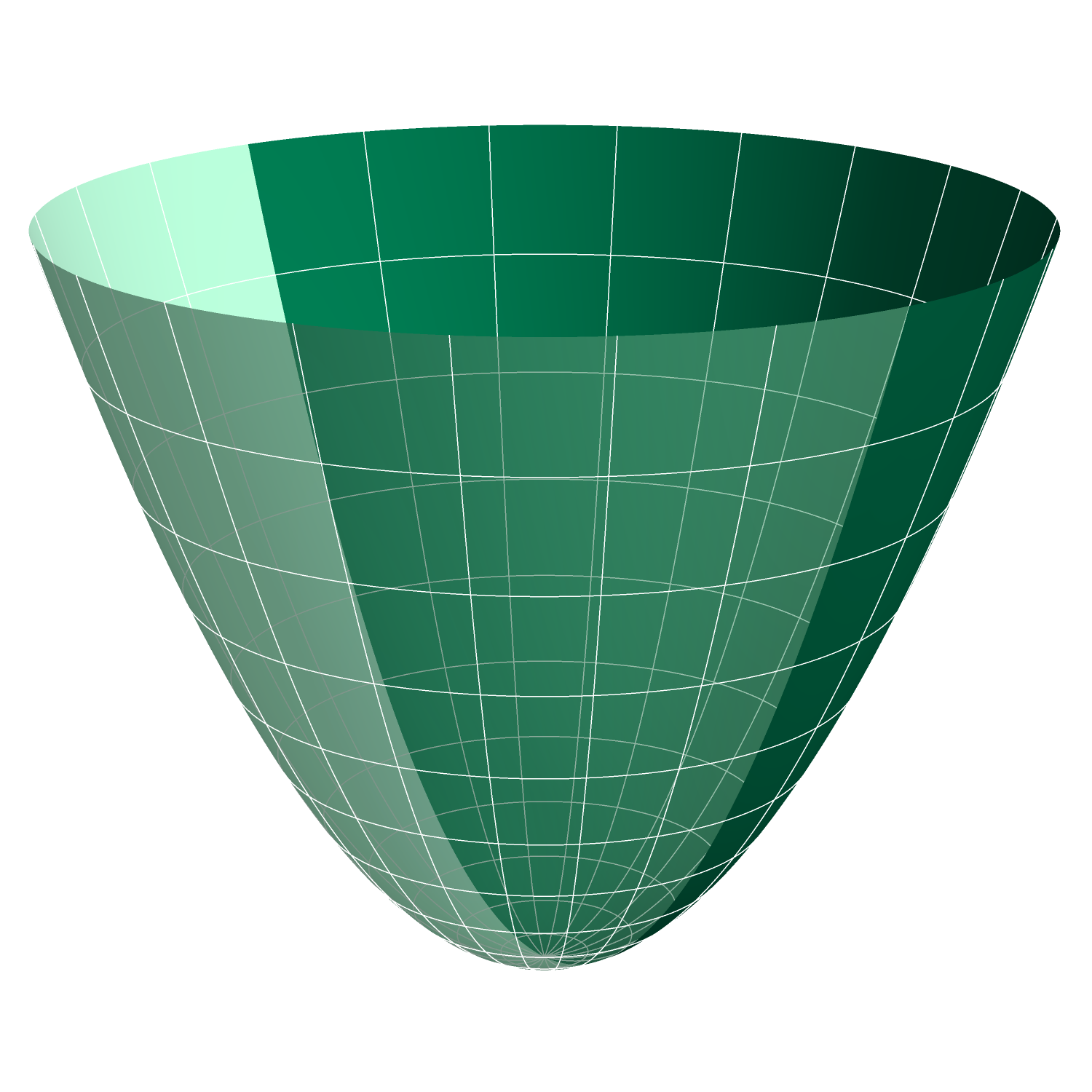}}%
		\begin{subfigure}[b]{0.435\textwidth}
			\centering
			\usebox{\largestimage}
			\caption{A sphere}
			\label{fig:sphereI3}
		\end{subfigure}
		\begin{subfigure}[b]{0.545\textwidth}
			\centering
			\raisebox{\dimexpr.5\ht\largestimage-.5\height}{\includegraphics[width=\textwidth]{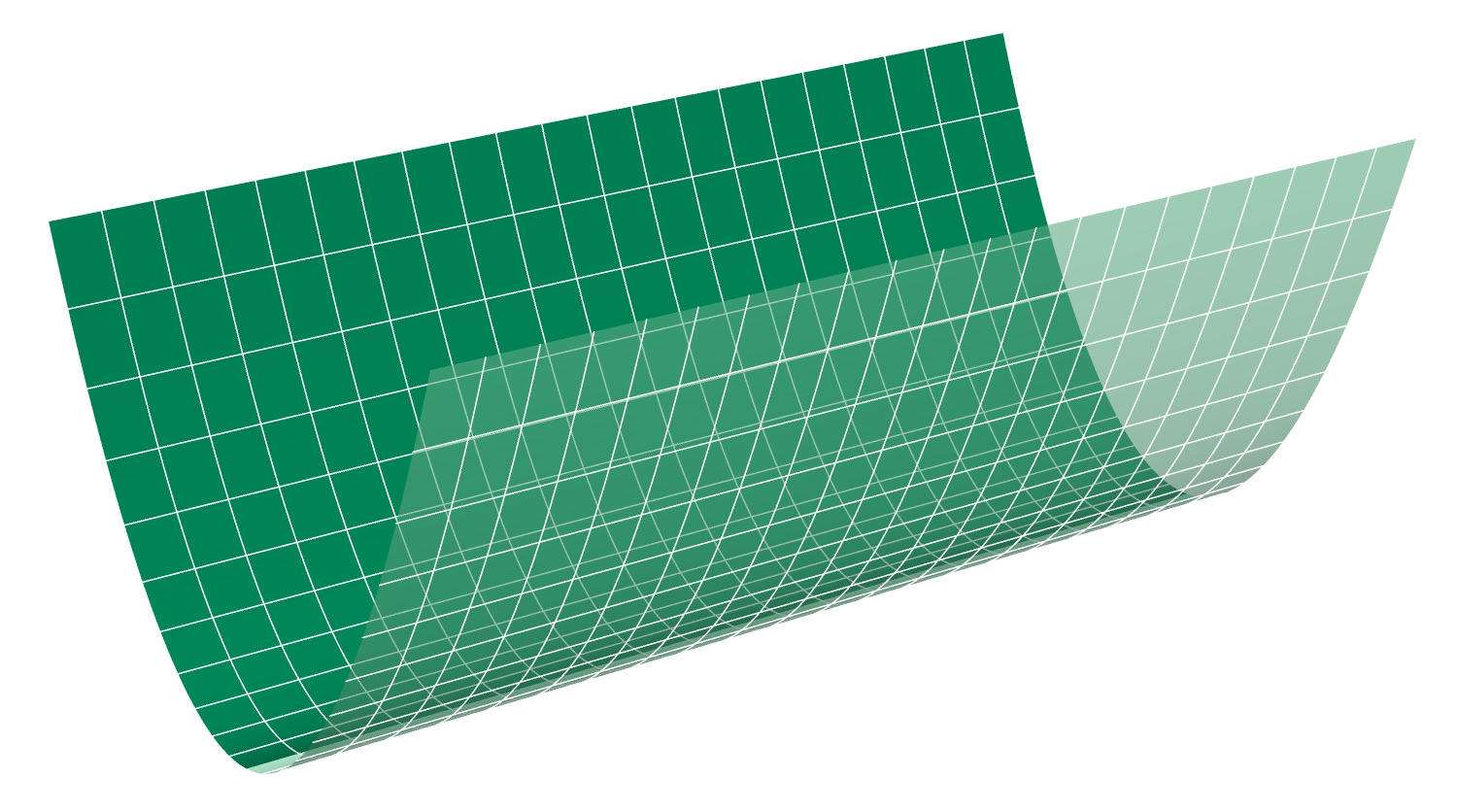}}
			\caption{A cylinder}
			\label{fig:cylinder}
		\end{subfigure}
		\caption{Trivial examples of cmc surfaces in isotropic $3$-space.}
		\label{fig:trivial}
	\end{figure}

	\item[Cylinders]
		Fixing some constant $H$, let $\omega = \dif{z}$ and $h_2 = H z$.
	Thus we have
		\[
			h_1 = H \int \dif{z} = Hz.
		\]
	Using the Kenmotsu-type representation, we find
		\[
			x_z = \frac{1}{2}(H(\bar{z} + z), 1, -i) =  \frac{1}{2}(2Hu,1,-i), 
				\quad x_{\bar{z}} =  \frac{1}{2}(2Hu,1,i).
		\]
	Calculating that
		\[
			x_u = x_z + x_{\bar{z}} = (2Hu,1,0), \quad x_v = i (x_z - x_{\bar{z}}) = (0,0,1),
		\]
	we can integrate and obtain
		\[
			x = (Hu^2, u, v).
		\]
	This is the cmc $H$ cylinder in the isotropic $3$-space (see Figure~\ref{fig:cylinder}).

	\item[Delaunay-type surfaces]
	For this example, let $\omega = e^z \dif{z}$ so that $h_1 = H e^z$, and set $h_2 = a e^{-z}$ for some constant $a$.
	Using the Kenmotsu-type representation, we find
		\begin{align*}
			x_z &= \frac{1}{2}\left(H e^{\bar{z}} + a e^{-z}, 1, -i\right)e^{z} = \frac{1}{2}\left(He^{2u} + a, e^{z}, -ie^{z}\right), \\
			x_{\bar{z}} &=  \frac{1}{2}\left(He^{2u} + a, e^{\bar{z}}, ie^{\bar{z}}\right).
		\end{align*}
	Therefore, we obtain
		\begin{align*}
			x_u &= x_z + x_{\bar{z}} = (He^{2u} + a,e^{u} \cos v, e^{u} \sin v), \\
			x_v &= i (x_z - x_{\bar{z}}) = (0,- e^{u}\sin v, e^{u} \cos v),
		\end{align*}
	and integrating tells us that
		\[
			x = \left(\frac{He^{2u} + 2au}{2},e^{u} \cos v, e^{u} \sin v\right).
		\]
	These are the rotationally invariant cmc $H$ surfaces in isotropic $3$-space (see Figure~\ref{fig:delaunay}).
	
	\begin{figure}
		\begin{minipage}{0.495\textwidth}
			\centering
			\includegraphics[width=0.9\textwidth]{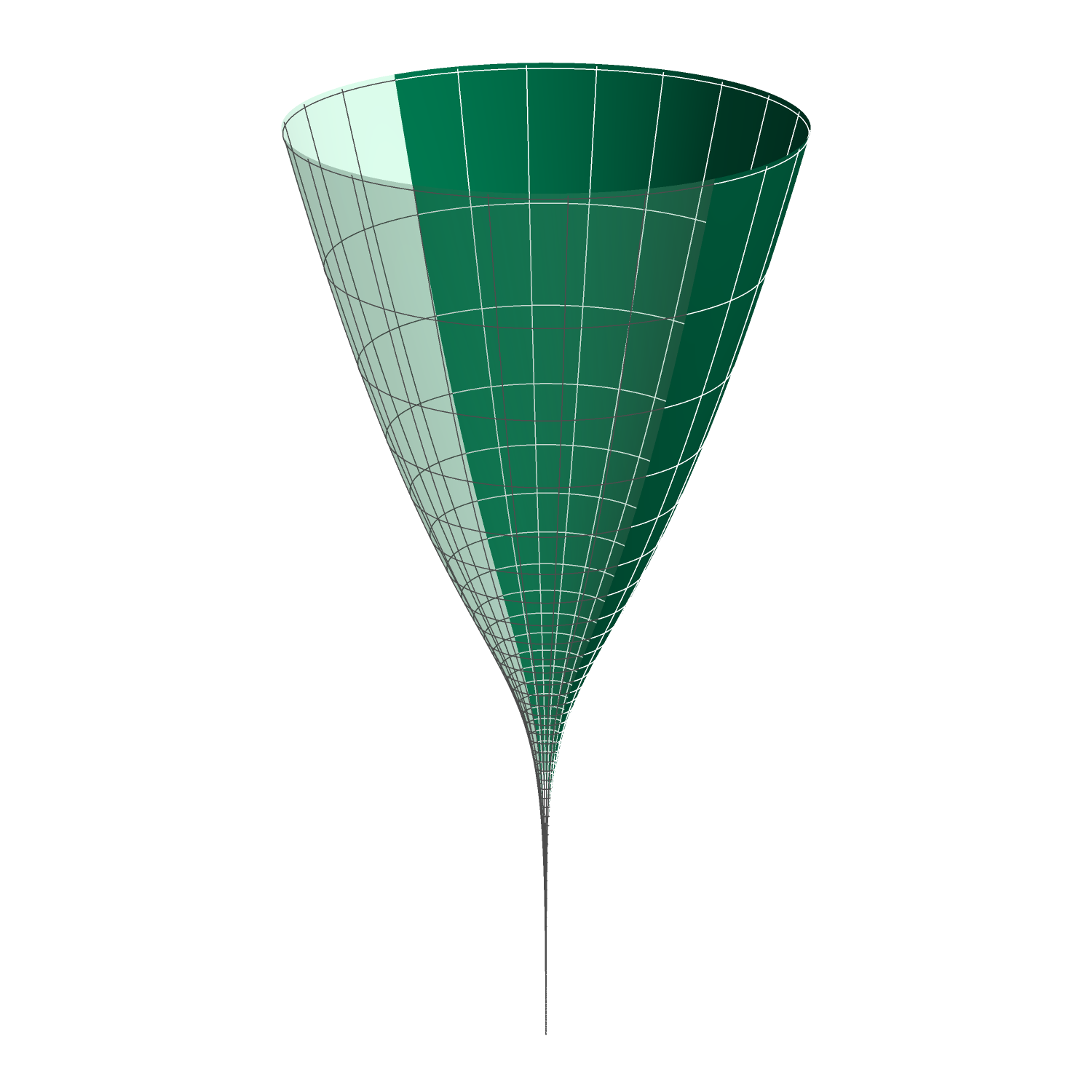}
		\end{minipage}
		\begin{minipage}{0.495\textwidth}
			\centering
			\includegraphics[width=0.9\textwidth]{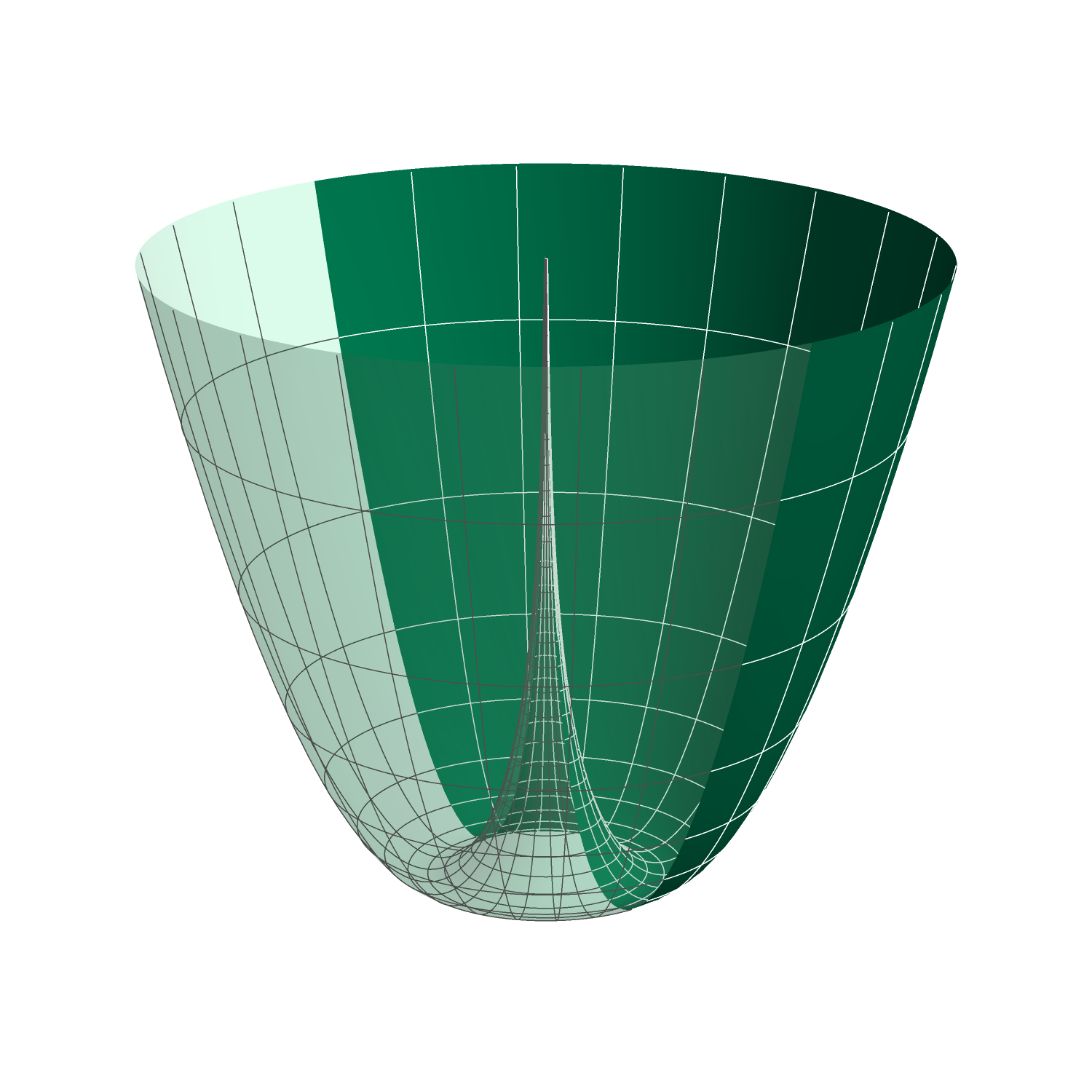}
		\end{minipage}
		\caption{Delaunay-type surfaces (with cmc-$1$) in isotropic $3$-space, drawn using $a=1$ (on the left), and $a = -2$ on the right.}
		\label{fig:delaunay}
	\end{figure}
	
	\begin{figure}
		\begin{minipage}{0.327\textwidth}
			\centering
			\includegraphics[width=\textwidth]{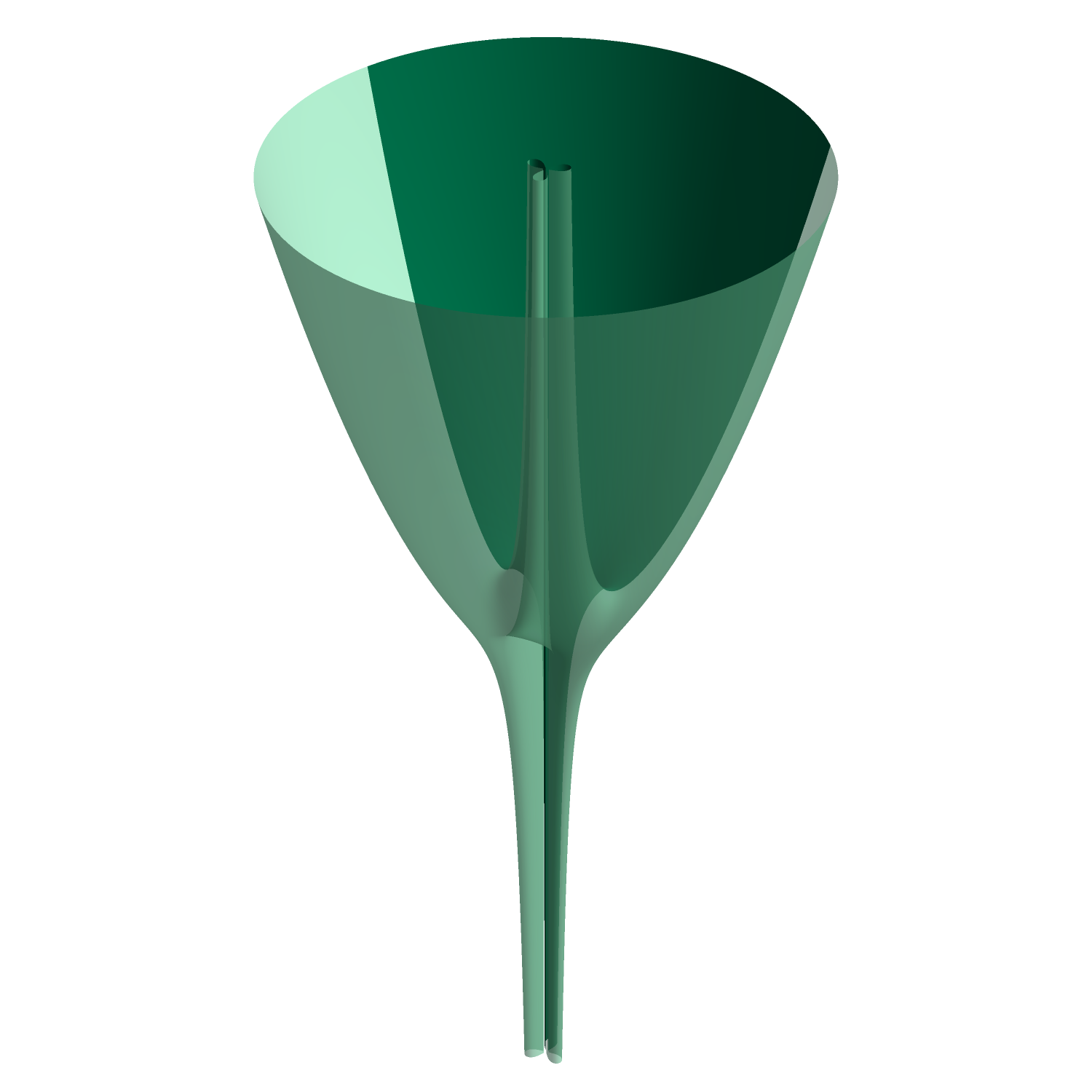}
		\end{minipage}
		\begin{minipage}{0.327\textwidth}
			\centering
			\includegraphics[width=\textwidth]{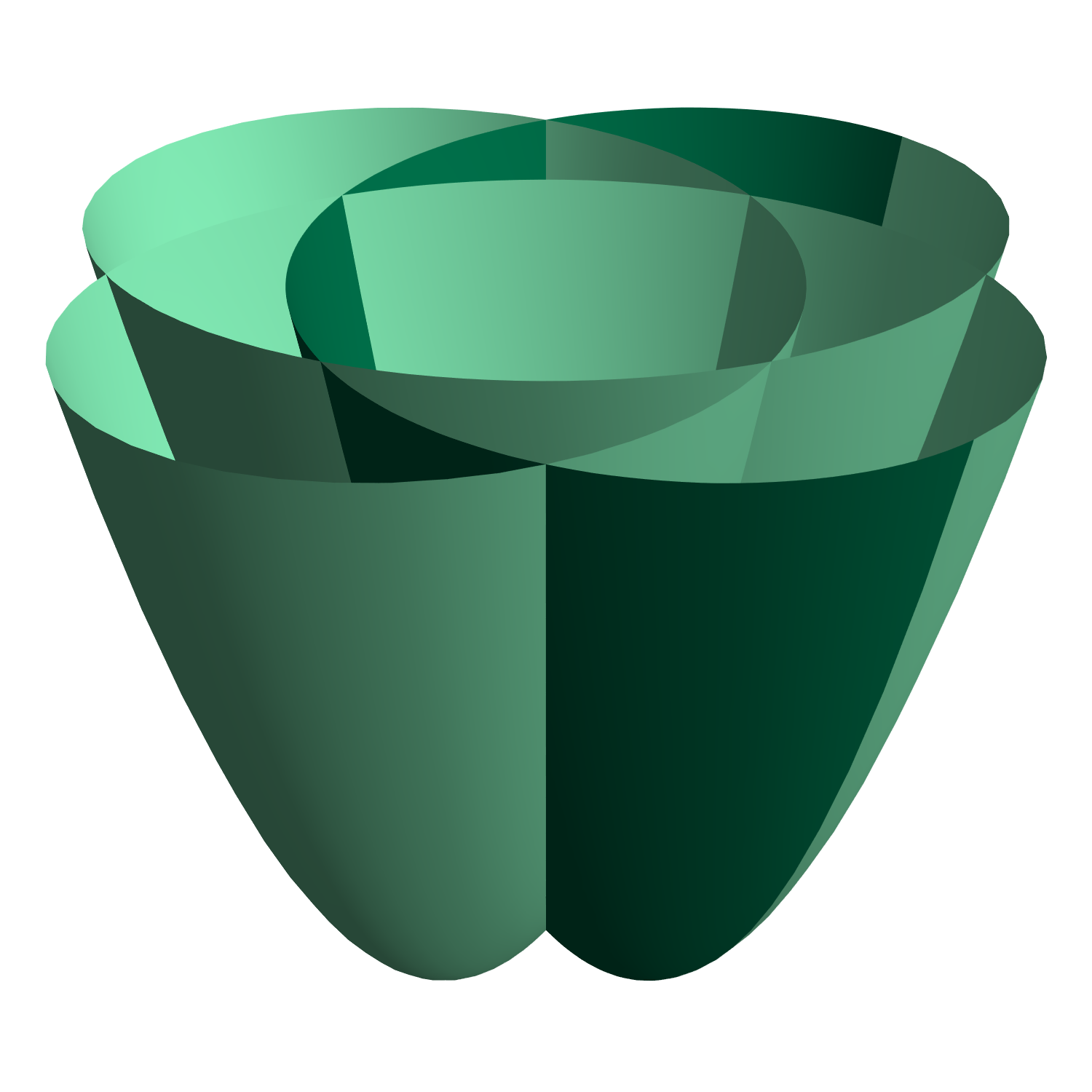}
		\end{minipage}
		\begin{minipage}{0.327\textwidth}
			\centering
			\includegraphics[width=\textwidth]{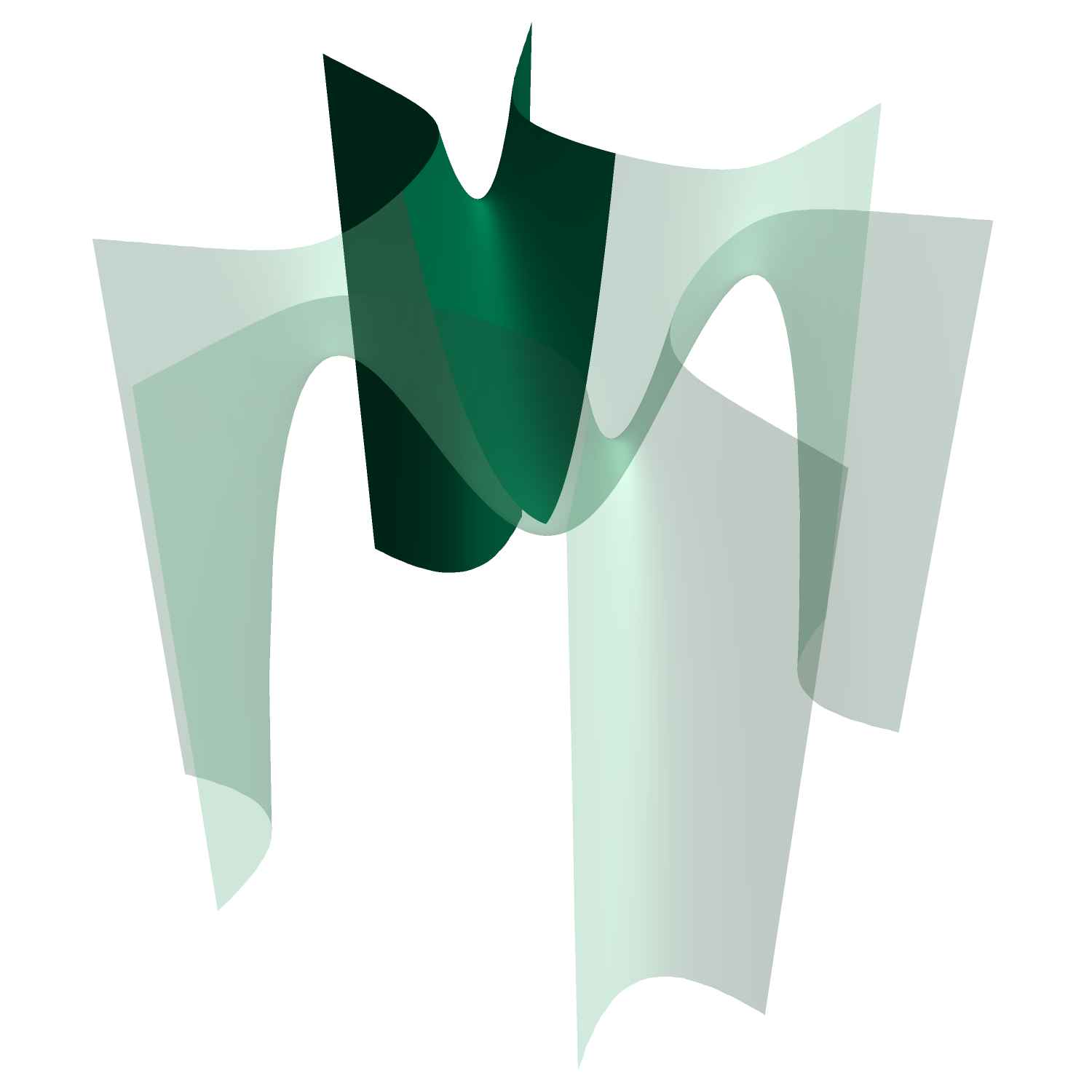}
		\end{minipage}
		\caption{Singly-periodic cmc-$1$ surfaces in isotropic $3$-space with dihedral symmetry, drawn using $a = 1, b = -3$ (on the left), $a = 2, b = \frac{8}{3}$ (in the middle), and $a = \frac{1}{3}, b = \frac{4}{3}$ (on the right).}
		\label{fig:single}
	\end{figure}
	
	\item[Singly periodic cmc surfaces]
	To obtain examples of singly periodic cmc surfaces, let $\omega = e^{az} \dif{z}$ so that $h_1 = \frac{H}{a} e^{az}$, and set $h_2 = e^{(b-a)z}$ for some constants $a,b$.
	Then the Kenmotsu-type representation tells us that
		\begin{align*}
			x_z &= \frac{1}{2}\left(\tfrac{H}{a} e^{a \bar{z}} + e^{(b-a)z}, 1, -i\right)e^{a z} = \frac{1}{2}\left(\tfrac{H}{a}e^{2au} + e^{bz}, e^{a z}, -ie^{a z}\right), \\
			x_{\bar{z}} &= \frac{1}{2}\left(\tfrac{H}{a}e^{2au} + e^{b\bar{z}}, e^{a \bar{z}}, ie^{a \bar{z}}\right).
		\end{align*}
	Thus we calculate that
		\begin{align*}
			x_u &= x_z + x_{\bar{z}} = \left(\tfrac{H}{a}e^{2au} + e^{bu}\cos bv ,e^{a u} \cos a v, e^{a u} \sin av\right), \\
			x_v &= i (x_z - x_{\bar{z}}) = \left(-e^{bu}\sin bv ,- e^{a u} \sin a v, e^{a u} \cos av\right).
		\end{align*}
	Integrating, we obtain
		\[
			x = \left(\frac{H e^{2au}}{2a^2} + \frac{e^{bu}\cos bv}{b} ,\frac{e^{a u}\cos a v}{a} , \frac{e^{a u} \sin av}{a} \right).
		\]

	Let $a, b$ be rational numbers so that there are relatively prime pair $(p_1, q_1), (p_2, q_2) \in \mathbb{Z}^2$ with
		\[
			a = \frac{p_1}{q_1}, \quad\text{and}\quad b = \frac{p_2}{q_2}.
		\]
	For such $a,b \in \mathbb{Q}$, the surface is $2\pi L$-periodic over $v$, i.e.\ $x(u,v) = x(u, v+ 2\pi L)$ for
		\[
			L = \frac{\lcm(q_1, q_2)}{\gcd(p_1, p_2)},
		\]
	where $\gcd$ and $\lcm$ stands for the greatest common divisor and the least common multiple, respectively.
	In particular, the cmc surface has dihedral symmetry $D_{L|b|}$ (see Figure~\ref{fig:single}).
\end{description}

\textbf{Acknowledgements.}
The last author gratefully acknowledges the support from NRF of Korea (2017R1E1A1A03070929 and 2020R1F1A1A01074585).
	
% \bib, bibdiv, biblist are defined by the amsrefs package.
\begin{bibdiv}
\begin{biblist}

\bib{aledo_marginally_2005}{article}{
	author={Aledo, Juan~A.},
	author={G{\'a}lvez, Jos{\'e}~A.},
	author={Mira, Pablo},
	title={Marginally trapped surfaces in $\mathbb{{L}}^4$ and an extended {{Weierstrass-Bryant}} representation},
	date={2005},
	journal={Ann. Global Anal. Geom.},
	volume={28},
	number={4},
	pages={395\ndash 415},
	review={\MR{2200000}},
	doi={10.1007/s10455-005-1620-7}
}

\bib{bayard_spinorial_2013}{article}{
	author={Bayard, Pierre},
	title={On the spinorial representation of spacelike surfaces into $4$-dimensional {{Minkowski}} space},
	date={2013},
	journal={J. Geom. Phys.},
	volume={74},
	pages={289\ndash 313},
	review={\MR{3118587}},
	doi={10.1016/j.geomphys.2013.08.006}
}

\bib{beck_zur_1913}{article}{
	author={Beck, H.},
	title={Zur {{Geometrie}} in der {{Minimalebene}}},
	date={1913},
	journal={S.-B. Berlin. Math. Ges.},
	volume={12},
	pages={14\ndash 30},
}

\bib{berwald_uber_1915}{article}{
	author={Berwald, Ludwig},
	title={{\"Uber} {{Bewegungsinvarianten}} und elementare {{Geometrie}} in einer {{Minimalebene}}},
	date={1915},
	journal={Monatsh. Math. Phys.},
	volume={26},
	number={1},
	pages={211\ndash 228},
	review={\MR{1548649}},
	doi={10.1007/BF01999449}
}

\bib{blaschke_vorlesungen_1929}{book}{
	author={Blaschke, Wilhelm},
	title={{Vorlesungen \"uber Differentialgeometrie und geometrische Grundlagen von Einsteins Relativit\"atstheorie III: Differentialgeometrie der Kreise und Kugeln}},
	publisher={{Springer}},
	address={{Berlin}},
	date={1929}
}

\bib{cayley_sixth_1859}{article}{
	author={Cayley, Arthur},
	title={A sixth memoir upon quantics},
	date={1859},
	journal={Philos. Trans. R. Soc. Lond.},
	volume={149},
	pages={61\ndash 90},
	doi={10.1098/rstl.1859.0004}
}

\bib{cecil_lie_2008}{book}{
	author={Cecil, Thomas~E.},
	title={Lie sphere geometry},
	edition={Second edition},
	series={Universitext},
	publisher={{Springer}},
	address={{New York}},
	date={2008},
	review={\MR{2361414}},
	doi={10.1007/978-0-387-74656-2}
}

\bib{da_silva_holomorphic_2021}{article}{
	author={{da Silva}, Luiz C.~B.},
	title={Holomorphic representation of minimal surfaces in simply isotropic space},
	date={2021},
	journal={J. Geom.},
	volume={112},
	number={3},
	pages={Paper No. 35, 1\ndash 21},
	review={\MR{4318435}},
	doi={10.1007/s00022-021-00598-z}
}

\bib{friedrich_spinor_1998}{article}{
	author={Friedrich, Thomas},
	title={On the spinor representation of surfaces in {{Euclidean}} $3$-space},
	date={1998},
	journal={J. Geom. Phys.},
	volume={28},
	number={1-2},
	pages={143\ndash 157},
	review={\MR{1653146}},
	doi={10.1016/S0393-0440(98)00018-7}
}

\bib{graf_zur_1936}{article}{
	author={Graf, U.},
	title={Zur {{M\"obiusschen}} und {{Laguerreschen Kreisgeometrie}} in der {{Minimalebene}}},
	date={1936},
	journal={S.-B. Berlin. Math. Ges.},
	volume={35},
	pages={25\ndash 34},
}

\bib{hertrich-jeromin_introduction_2003}{book}{
	author={{Hertrich-Jeromin}, Udo},
	title={Introduction to {{M\"obius}} differential geometry},
	series={London {{Mathematical Society Lecture Note Series}}},
	publisher={{Cambridge University Press}},
	address={{Cambridge}},
	date={2003},
	volume={300},
	review={\MR{2004958}},
}

\bib{izumiya_lightcone_2004}{article}{
	author={Izumiya, Shyuichi},
	author={Pei, Donghe},
	author={Romero~Fuster, Mar{\'i}a del~Carmen},
	title={The lightcone {{Gauss}} map of a spacelike surface in {{Minkowski}} $4$-space},
        date={2004},
        journal={Asian J. Math.},
        volume={8},
        number={3},
        pages={511\ndash 530},
        review={\MR{2129247}},
        doi={10.4310/AJM.2004.v8.n3.a7}
}

\bib{kamberov_bonnet_1998}{article}{
	author={Kamberov, George},
	author={Pedit, Franz},
	author={Pinkall, Ulrich},
	title={Bonnet pairs and isothermic surfaces},
	date={1998},
	journal={Duke Math. J.},
	volume={92},
	number={3},
	pages={637\ndash 644},
	review={\MR{1620534}},
	doi={10.1215/S0012-7094-98-09219-5}
}

\bib{kenmotsu_weierstrass_1979}{article}{
	author={Kenmotsu, Katsuei},
	title={Weierstrass formula for surfaces of prescribed mean curvature},
	date={1979},
	journal={Math. Ann.},
	volume={245},
	number={2},
	pages={89\ndash 99},
	review={\MR{3606433}},
	doi={10.1007/BF01428799}
}

\bib{kilian_material-minimizing_2017}{article}{
	author={Kilian, Martin},
	author={Pellis, Davide},
	author={Wallner, Johannes},
	author={Pottmann, Helmut},
	title={Material-minimizing forms and structures},
	date={2017},
	journal={ACM Trans. Graph.},
	volume={36},
	number={6},
	pages={173:1\ndash 12},
	doi={10.1145/3130800.3130827}
}

\bib{klein_ueber_1871}{article}{
	author={Klein, Felin},
	title={Ueber die sogenannte {{Nicht-Euklidische Geometrie}}},
	date={1871},
	journal={Math. Ann.},
	volume={4},
	number={4},
	pages={573\ndash 625},
	doi={10.1007/BF02100583}
}

\bib{klein_vergleichende_1872}{book}{
	author={Klein, Felix},
	title={Vergleichende {{Betrachtungen}} \"uber neuere geometrische {{Forschungen}}},
	publisher={{Verlag von Andreas Deichert}},
	address={{Erlangen}},
	date={1872},
}

\bib{klein_ueber_1873}{article}{
	author={Klein, Felix},
	title={Ueber die sogenannte {{Nicht-Euklidische Geometrie}}},
	date={1873},
	journal={Math. Ann.},
	volume={6},
	number={2},
	pages={112\ndash 145},
	review={\MR{1509812}},
	doi={10.1007/BF01443189}
}

\bib{kusner_spinor_1995}{article}{
	author={Kusner, Rob},
	author={Schmitt, Nick},
	title={The spinor representation of minimal surfaces},
	date={1995},
	eprint={dg-ga/9512003},
	url={http://arxiv.org/abs/dg-ga/9512003}
}

\bib{laguerre_sur_1881}{article}{
	author={Laguerre, Edmond},
	title={Sur la transformation par directions réciproques},
	date={1881},
	journal={C. R. Acad. Sci. Paris},
	volume={92},
	pages={71\ndash 73},
}

\bib{millar_designing_2022}{article}{
	author={Millar, Cameron},
	author={Mitchell, Toby},
	author={Mazurek, Arek},
	author={Chhabra, Ashpica},
	author={Beghini, Alessandro},
	author={Clelland, Jeanne~N},
	author={McRobie, Allan},
	author={Baker, William~F},
	title={On designing plane-faced funicular gridshells},
	date={2023},
	journal={Int. J. Space Struct.},
	volume={38},
	number={1},
	pages={40\ndash 63},
	doi={10.1177/09560599221126656}
}

\bib{pember_weierstrass-type_2020}{article}{
	author={Pember, Mason},
	title={Weierstrass-type representations},
	date={2020},
	journal={Geom. Dedicata},
	volume={204},
	number={1},
	pages={299\ndash 309},
	review={\MR{4056704}},
	doi={10.1007/s10711-019-00456-y}
}

\bib{pember_discrete_2022}{article}{
	author={Pember, Mason},
	author={Polly, Denis},
	author={Yasumoto, Masashi},
	title={Discrete {{Weierstrass-type}} representations},
	date={2022},
	journal={To appear in Discrete Comput. Geom.},
	doi={10.1007/s00454-022-00439-z}
}

\bib{pottmann_laguerre_2009}{article}{
	author={Pottmann, Helmut},
	author={Grohs, Philipp},
	author={Mitra, Niloy~J.},
	title={Laguerre minimal surfaces, isotropic geometry and linear elasticity},
	date={2009},
	journal={Adv. Comput. Math.},
	volume={31},
	number={4},
	pages={391\ndash 419},
	review={\MR{2558260}},
    	doi={10.1007/s10444-008-9076-5}
}

\bib{pottmann_discrete_2007}{incollection}{
	author={Pottmann, Helmut},
	author={Liu, Yang},
	title={Discrete surfaces in isotropic geometry},
	date={2007},
	book={
		title={Mathematics of {{surfaces XII}}},
		editor={Martin, Ralph},
		editor={Sabin, Malcolm},
		editor={Winkler, Joab},
		series={Lecture {{Notes}} in {{Computer Science}}},
		publisher={{Springer}},
		address={{Berlin, Heidelberg}}},
       pages={341\ndash 363},
       doi={10.1007/978-3-540-73843-5_21}
}

\bib{pottmann_applications_1998}{article}{
	author={Pottmann, Helmut},
	author={Peternell, Martin},
	title={Applications of {{Laguerre}} geometry in {{CAGD}}},
	date={1998},
	journal={Comput. Aided Geom. Design},
	volume={15},
	number={2},
	pages={165\ndash 186},
	review={\MR{1605295}},
	doi={10.1016/S0167-8396(97)00023-X}
}

\bib{sachs_isotrope_1990}{book}{
	author={Sachs, Hans},
	title={Isotrope {{Geometrie}} des {{Raumes}}},
	publisher={{Friedr. Vieweg \& Sohn}},
	address={{Braunschweig}},
	date={1990},
	review={\MR{1059891}},
	doi={10.1007/978-3-322-83785-1}
}

\bib{seo_zero_2021}{article}{
	author={Seo, Jin~Ju},
	author={Yang, Seong-Deog},
	title={Zero mean curvature surfaces in isotropic three-space},
	date={2021},
	journal={Bull. Korean Math. Soc.},
	volume={58},
	number={1},
	pages={1\ndash 20},
	review={\MR{4206079}},
	doi={10.4134/BKMS.b190783}
}

\bib{strubecker_beitrage_1938}{article}{
	author={Strubecker, Karl},
	title={Beitr\"age zur {{Geometrie}} des isotropen {{Raumes}}},
	date={1938},
	journal={J. Reine Angew. Math.},
	volume={178},
	pages={135\ndash 173},
	review={\MR{1581572}},
	doi={10.1515/crll.1938.178.135}
}

\bib{strubecker_differentialgeometrie_1941}{article}{
	author={Strubecker, Karl},
	title={Differentialgeometrie des isotropen {{Raumes}}. {{I}}. {{Theorie}} der {{Raumkurven}}},
	date={1941},
	journal={Akad. Wiss. Wien, S.-B. IIa},
	volume={150},
	pages={1\ndash 53},
	review={\MR{18957}},
}

\bib{strubecker_differentialgeometrie_1942-1}{article}{
	author={Strubecker, Karl},
	title={Differentialgeometrie des isotropen {{Raumes}}. {{II}}. {{Die}} {{Flächen}} konstanter {{Relativkrümmung}} ${K}=rt-s^2$},
	date={1942},
	journal={Math. Z.},
	volume={47},
	pages={743–777},
	review={\MR{16238}},
	doi={10.1007/BF01180984}
}

\bib{strubecker_differentialgeometrie_1942}{article}{
	author={Strubecker, Karl},
	title={Differentialgeometrie des isotropen {{Raumes}}. {{III}}. {{Fl\"achentheorie}}},
	date={1942},
	journal={Math. Z.},
	volume={48},
	pages={369\ndash 427},
	review={\MR{9145}},
	doi={10.1007/BF01180022}
}

\bib{strubecker_differentialgeometrie_1944}{article}{
	author={Strubecker, Karl},
	title={Differentialgeometrie des isotropen {{Raumes}}. {{IV}}. {{Theorie}} der fl\"achentreuen {{Abbildungen}} der {{Ebene}}},
	date={1944},
	journal={Math. Z.},
	volume={50},
	pages={1\ndash 92},
	review={\MR{16990}},
	doi={10.1007/BF01312437}
}

\bib{strubecker_differentialgeometrie_1949}{article}{
	author={Strubecker, Karl},
	title={Differentialgeometrie des isotropen {{Raumes}}. {{V}}. {{Zur Theorie}} der {{Eilinien}}},
	date={1949},
	journal={Math. Z.},
	volume={51},
	pages={525\ndash 573},
	review={\MR{29201}},
	doi={10.1007/BF01181547}
}

\bib{study_zur_1909}{article}{
	author={Study, Eduard},
	title={Zur {{Differentialgeometrie}} der analytischen {{Curven}}},
	date={1909},
	journal={Trans. Amer. Math. Soc.},
	volume={10},
	number={1},
	pages={1\ndash 49},
	review={\MR{1500824}},
	doi={10.2307/1988719}
}

\bib{tellier_designing_2023}{article}{
	author={Tellier, X.},
	author={Douthe, C.},
	author={Baverel, O.},
	author={Hauswirth, L.},
	title={Designing funicular grids with planar quads using isotropic {{Linear-Weingarten}} surfaces},
	date={2023},
	journal={Int. J. Solids Struct.},
	volume={264},
	pages={112028:1\ndash 16},
	doi={10.1016/j.ijsolstr.2022.112028}
}

\bib{vouga_design_2012}{article}{
	author={Vouga, Etienne},
	author={H{\"o}binger, Mathias},
	author={Wallner, Johannes},
	author={Pottmann, Helmut},
	title={Design of self-supporting surfaces},
	date={2012},
	journal={ACM Trans. Graph.},
	volume={31},
	number={4},
	pages={87:1\ndash 11},
	doi={10.1145/2185520.2185583}
}

\bib{yaglom_simple_1979}{book}{
	author={Yaglom, I.~M.},
	title={A simple non-{{Euclidean}} geometry and its physical basis},
	series={Heidelberg {{Science Library}}},
	publisher={{Springer-Verlag}},
	address={{New York-Heidelberg}},
	date={1979},
	review={\MR{520230}},
	doi={10.1007/978-1-4612-6135-3}
}

\end{biblist}
\end{bibdiv}

%\bibliography{spinorI3}
%
\end{document}